\newtheorem{conjecture}{Conjecture}[section]
\newtheorem{theorem}[conjecture]{Theorem}
\newtheorem{lemma}[conjecture]{Lemma}
\newtheorem{corollary}[conjecture]{Corollary}
\newtheorem{proposition}[conjecture]{Proposition}
\theoremstyle{definition}
\newtheorem{definition}[conjecture]{Definition}
\newtheorem{example}[conjecture]{Example}
\newtheorem{remark}[conjecture]{Remark}
\newtheorem{algorithm}[conjecture]{Algorithm}
\newcommand{\Z}{{\mathbb{Z}}}
\newcommand{\N}{{\Z^{>0}}}
\newcommand{\Rplus}{{\mathbb{R}^{>0}}}
\newcommand{\Q}{{\mathbb{Q}}}
\newcommand{\F}{{\mathbb{F}}}
\newcommand{\Qp}{{\Q_p}}
\newcommand{\calO}{{\mathcal{O}}}
\newcommand{\KK}{{K}}
\newcommand{\clK}{{\overline{K}}} 
\newcommand{\OK}{{\calO_\KK}} 
\newcommand{\RK}{{\underline{\KK}}} 
\newcommand{\repsK}{R_\RK}  
\newcommand{\reps}[1]{R_{#1}}  
\newcommand{\repss}[1]{\reps{#1}^\times}  
\newcommand{\RA}{\underline{A}} 
\newcommand{\RS}[1]{\underline{S}_{#1}} 
\newcommand{\hhf}{\phi} 
\newcommand{\KL}{{L}}
\newcommand{\OL}{{\calO_\KL}}
\newcommand{\KN}{{N}}                 
\newcommand{\KM}{{M}}                 
\newcommand{\disc}{{\operatorname{disc}\,}}
\newcommand{\cont}[2]{\operatorname{cont}_{#1}\!\left(#2\right)}
\newcommand{\norm}{{\mathrm{N}}}
\newcommand{\csmod}{{\;\mathtt{mod}\;}}
\newcommand{\slopeden}{e} 
\newcommand{\slopenum}{h} 
\newcommand{\invA}{\mathcal{A}} 
\newcommand{\npol}{\mathcal{N}} 
\newcommand{\rpol}{\mathcal{R}} 
\newcommand{\spol}{\mathcal{S}} 
\newcommand{\cpol}[1]{\mathcal{N}_{#1}} 
\newcommand{\ppol}{\mathcal{P}} 
\newcommand{\rpolypoints}{\{(1,J_0),(p^{s_1},J_1),\ldots,(p^{s_{u-1}},J_{u-1}),(p^{s_u},0),\ldots,(n,0)\}}
\newcommand{\npl}{i}  
\newcommand{\ote}[1]{{$#1$}} 
\newcommand{\nge}[1]{{{\color{green}$\mathbf{#1}$}}} 
\newcommand{\ngt}[1]{{\color{green}#1}} 
\newcommand{\nre}[1]{{{\color{red}$\mathbf{#1}$}}} 
\newcommand{\nrt}[1]{{\color{red}#1}} 
\newcommand{\nbe}[1]{{{\color{blue}$\mathbf{#1}$}}} 
\newcommand{\nbt}[1]{{\color{blue}#1}} 
\setlist[itemize,1]{label=$\bullet$}
\setlist[itemize,2]{label=$\circ$}
\setlist[itemize,3]{label=$\circ$}
\setlist[itemize,4]{label=$\bullet$}
\setlist[itemize,5]{label=$\bullet$}
\setlist[itemize,6]{label=$\bullet$}
\setlist[itemize,7]{label=$\bullet$}
\setlist[itemize,8]{label=$\bullet$}
\setlist[itemize,9]{label=$\bullet$}
\setlist[enumerate,1]{label={\rm(\alph*)}}
\setlist[enumerate,2]{label={\rm(\roman*)}}
\newcommand{\Algo}[5]
                {
                \begin{algorithm}[\texttt{#1}] \label{#2}{$\;$}\rm
                    \\[1ex]
\mbox{\enspace}
                    \rlap{\rm Input: }\phantom{\rm Output: }
\parbox[t]{\textwidth-\widthof{{\rm Output: }}-\widthof{\enspace\enspace}}{#3}
                    \\[1ex]
\mbox{\enspace}
                    {\rm Output: }
\parbox[t]{\textwidth-\widthof{{\rm Output: }}-\widthof{\enspace\enspace}}{#4}
\\[1ex]
\parskip0pt
\begin{list}{}{\setlength{\leftmargin}{0pt}}
\item                    #5
\end{list}
\parskip6pt
                \end{algorithm}
                \goodbreak}
\begin{document}

\title{Enumerating Extensions of $(\pi)$-Adic Fields with Given Invariants}
\author{Sebastian Pauli}
\author{Brian Sinclair}
\begin{abstract}
We give an algorithm that constructs 
a minimal set of polynomials defining all extension of
a $(\pi)$-adic field with given, inertia degree, ramification index, discriminant, 
ramification polygon, and 
residual polynomials of the segments of the ramification polygon.
\end{abstract}

\maketitle

\section{Introduction}

It follows from Krasner's Lemma that a local field has only finitely many extensions of a given degree and discriminant.  
Thus it is natural to ask whether one can generate a list of polynomials such that each extension is generated by exactly one of the polynomials.

For abelian extensions, local class field theory gives a one-to-one correspondence 
between the abelian extensions of $\KK$ and the open subgroups of the unit group $\KK^\times$ of $\KK$.
An algorithm that constructs the wildly ramified part of the class field as towers of extensions of degree $p$ was given in \cite{pauli-lcf}. 
Recently Monge \cite{monge} has published an algorithm that, given a subgroup of $\KK^\times$ of finite index, directly constructs the generating polynomial of the corresponding totally ramified extension.

In the non-abelian case, such a complete description is not yet known.  However, a description of all tamely ramified extensions is well known and all extensions of degree $p$ have been described completely by Amano \cite{amano}.  
Krasner \cite{krasner} gave a formula for the number of totally ramified extensions, using his famous lemma as a main tool.  
Following his approach, Pauli and Roblot \cite{pauli-roblot} presented an algorithm that returned a set of generating polynomials for all extensions of a given degree and discriminant.
They used the root-finding algorithm described by Panayi \cite{panayi} to obtain one generating polynomial for each extension.
A new approach for determining whether two polynomials generate the same extension was recently presented by Monge \cite{monge}.  
He introduces \emph{reduced polynomials} that yield a canonical set of generators for totally ramified extensions of $\KK$.   

Monge's methods also considerably reduce the number of generating polynomials that need to be considered when computing a set of polynomials defining all totally ramified extensions of $\KK$.  
We present an algorithm that for each extension with given invariants constructs a considerably smaller set of defining polynomials than 
the set obtained with Krasner's bound.
In many cases this eliminates the need to check whether two polynomials generate the same extension.
The polynomials constructed are reduced in Monge's sense.

\subsection*{Overview}
{In the first three sections of the paper, we examine extension invariants and how specifying each
invariant reduces the number of polynomials to be considered.}
We recall some of Krasner's results \cite{krasner} that are based on degree and discriminant
(Section \ref{sec disc}) and then add the ramification polygon as an additional invariant 
(Section \ref{sec ram pol}).
In Section \ref{sec res seg} we introduce an invariant based on the residual polynomials 
of the ramification polygon which consists of a polynomial over the residue class field
for each segment of the ramification polygon.
The residual polynomials together with ideas of Monge \cite{monge} reduce the 
number of polynomials to be considered considerably (Section \ref{sec res comp}).
In Section \ref{sec gen} we give an algorithm that uses the results of the previous sections to 
return a set of polynomials that generate all extensions 
with given invariants.  
In many cases this set contains exactly one polynomial for each extension.
Section \ref{sec ex} contains examples and comparisons with the implementations of the
algorithm by Pauli and Roblot \cite{pauli-roblot}.

\subsection*{Notation}

By convention fractions denoted $\slopenum/\slopeden$ or $\slopenum_i/\slopeden_i$ are always taken to be in lowest terms.

We denote by $\Qp$ the field of $p$-adic numbers and by $v_p$ the (exponential) valuation normalized such that  $v_p(p)=1$.
By $\KK$ we denote a finite extension of $\Qp$, by $\OK$ the valuation ring of $\KK$, and by $\pi$ a uniformizer of $\OK$.
We write $v_\pi$ for the valuation of $\KK$ that is normalized such that $v_\pi(\pi)=1$ and also denote the unique extension of $v_\pi$ to an algebraic closure $\clK$ of $\KK$ (or to any intermediate field) by $v_\pi$.

For $\gamma\in\clK^\times$ and $\delta\in\clK^\times$ we write
\( \gamma\sim\delta \)
if
\[ v(\gamma-\delta)>v(\gamma)\]
and make the supplementary assumption $0\sim0$.

For $\gamma\in\OK$ we denote by $\underline\gamma$ the class $\gamma+(\pi)$ in $\RK=\OK/(\pi)$,
by $\reps{\RK}$  a fixed set of representatives of $\RK$ in $\OK$,
and by $\repss{\RK}$ the set $\reps{\RK}$ without the representative for $\underline0\in\RK$.
For a polynomial $\varphi\in\OK[x]$ of degree $n$ we denote its coefficients by $\varphi_i$ ($0\le i\le n$)
such that $\varphi(x)=\varphi_n x^n+\varphi_{n-1}x^{n-1}+\dots+\varphi_0$ and write 
$ \varphi_i = \sum_{j=0}^\infty \varphi_{i,j} \pi^j. $ where $\varphi_{i,j}\in\reps{\RK}$.
If $\varphi$ is Eisenstein then $\varphi_n=1$, $\varphi_{0,1}\ne 0$ and $\varphi_{i,0}>0$ for $1\le i<n$.

In examples we use a table to represent sets of polynomials. 
Each cell contains a set from which the corresponding
coefficient $\varphi_{i,j}$ of the $\pi$-adic expansion of the coefficient $\varphi_i = \sum_{j=0}^\infty \varphi_{i,j} \pi^j$
of the polynomial $\varphi(x)=\varphi_n x^n+\varphi_{n-1}x^{n-1}+\dots+\varphi_0$ can be chosen.

\begin{example}
The Eisenstein polynomials of degree $n$ over $\OK$ are represented by the template:
\begin{center}
\begin{tabular}{l|cccccccccc}
             &$x^n$  &$x^{n-1}$ &$x^{n-2}$ &$\cdots$  &$x^4$ &$x^3$ &$x^2$ &$x^1$ &$x^0$ \\\hline
      \vdots &\vdots &\vdots    &\vdots    &          &\vdots&\vdots&\vdots&\vdots&\vdots\\
      $\pi^2$&$\{0\}$&$\reps{\RK}$    & $\reps{\RK}$     &$\cdots$  & $\reps{\RK}$ &$\reps{\RK}$  &$\reps{\RK}$  &$\reps{\RK}$  &$\reps{\RK}$  \\
      $\pi^1$&$\{0\}$&$\reps{\RK}$    & $\reps{\RK}$     &$\cdots$  & $\reps{\RK}$ &$\reps{\RK}$  &$\reps{\RK}$  &$\reps{\RK}$  &$\repss{\RK}$ \\
      $\pi^0$ &$\{1\}$&$\{0\}$  &$\{0\}$ &$\cdots$ &$\{0\}$ &$\{0\}$ &$\{0\}$ &$\{0\}$ &$\{0\}$ 
\end{tabular}
\end{center}
\end{example}


\section{Discriminant}\label{sec disc}

We recall some of the results Krasner used to obtain his formula for the number of extensions of a $p$-adic field \cite{krasner}.
These can also be found in \cite{pauli-roblot}.

The possible discriminants of finite extensions are given by Ore's conditions \cite{ore-bemerkungen}:

\begin{proposition}[Ore's conditions]\label{prop.ore}
Let $\KK$ be a finite extension of $\Qp$, $\OK$ its valuation ring with maximal ideal
$(\pi)$. Given $J_0 \in \Z$ let $a_0,b_0\in\Z$ be such that $J_0 = a_0n + b_0$ and
$0 \leq b_0 < n$. Then there exist totally ramified extensions
$\KL/\KK$ of degree $n$ and discriminant $(\pi)^{n+J_0-1}$ if and only if
\[
\min\{v_\pi(b_0) n, v_\pi(n) n\} \leq J_0 \leq v_\pi(n) n.
\]
\end{proposition}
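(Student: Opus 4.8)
The plan is to reduce the statement to a valuation computation for Eisenstein polynomials. Every totally ramified extension $\KL/\KK$ of degree $n$ is generated by a uniformizer $\piL$, satisfies $\OL=\OK[\piL]$, and the minimal polynomial $\varphi$ of $\piL$ over $\KK$ is Eisenstein of degree $n$; conversely every Eisenstein polynomial of degree $n$ over $\OK$ is irreducible and defines such an extension. Since $\OL=\OK[\piL]$ and $\KL/\KK$ is totally ramified, $v_\pi(\disc(\KL/\KK))=v_\pi(\disc\varphi)=v_\pi\bigl(\norm_{\KL/\KK}(\varphi'(\piL))\bigr)=v_\KL(\varphi'(\piL))$, where $v_\KL$ is normalized by $v_\KL(\piL)=1$. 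Writing $d=v_\KL(\varphi'(\piL))$ and $J_0=d-n+1$, it therefore suffices to determine exactly which integers $d$ arise as $\varphi$ ranges over the Eisenstein polynomials of degree $n$, and then to match the answer with the asserted inequalities.

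First I would expand $\varphi'(\piL)=n\piL^{\,n-1}+\sum_{i=1}^{n-1}i\varphi_i\piL^{\,i-1}$ and record the key observation that the summands cannot interfere: since $v_\KL(c)=n\,v_\pi(c)$ for every $c\in\KK^\times$, the $i$-th summand (when $\varphi_i\ne0$) has valuation $n\,v_\pi(i)+v_\KL(\varphi_i)+i-1\equiv i-1\pmod n$, while the leading term $n\piL^{\,n-1}$ has valuation $A:=n\,v_\pi(n)+n-1\equiv n-1\pmod n$; these $n$ residues $0,1,\dots,n-1$ are pairwise distinct. Hence there is no cancellation and
\[ d=\min\Bigl(A,\ \min_{1\le i\le n-1,\ \varphi_i\ne0}\bigl(n\,v_\pi(i)+v_\KL(\varphi_i)+i-1\bigr)\Bigr). \]

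From this the bounds follow. The leading term is always present, so $d\le A$, i.e. $J_0\le v_\pi(n)\,n$, and equality is attained by $\varphi(x)=x^n+\pi$; interpreting $v_\pi(0)=\infty$, this is exactly the $b_0=0$ case of the statement (where $\min\{v_\pi(b_0)n,v_\pi(n)n\}=v_\pi(n)n$). If $d<A$, then by the no-cancellation fact the minimum is attained by a single summand $i\varphi_i\piL^{\,i-1}$ with $1\le i\le n-1$ and $i-1\equiv d\pmod n$; since $J_0\equiv b_0\pmod n$ this forces $b_0\ne0$ and $i=b_0$. Using that $v_\KL(\varphi_{b_0})=n\,v_\pi(\varphi_{b_0})$ with $v_\pi(\varphi_{b_0})\ge1$, the defining equation $d=n\,v_\pi(b_0)+v_\KL(\varphi_{b_0})+b_0-1$ gives $J_0\ge v_\pi(b_0)\,n+b_0$; conversely any integer $d<A$ with $b_0=(d-n+1)\bmod n\ne0$ and $J_0\ge v_\pi(b_0)n+b_0$ is realized by choosing $\varphi_{b_0}$ of the appropriate valuation, $\varphi_0=\pi$, and all other coefficients $0$. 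A short case analysis on whether $v_\pi(b_0)\le v_\pi(n)$, together with the congruence $J_0\equiv b_0\pmod n$ (which makes ``$J_0\ge v_\pi(b_0)n$'' equivalent to ``$J_0\ge v_\pi(b_0)n+b_0$'' when $b_0\ne0$), then shows that this realizability condition coincides with $\min\{v_\pi(b_0)n,v_\pi(n)n\}\le J_0\le v_\pi(n)n$.

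The substantive step is the no-cancellation observation of the second paragraph: once the valuations of the terms of $\varphi'(\piL)$ are seen to lie in distinct residue classes modulo $n$, the discriminant valuation is computable term by term and the rest is bookkeeping, including the standard facts $\OL=\OK[\piL]$, irreducibility of Eisenstein polynomials, and $v_\pi(\norm_{\KL/\KK}(\alpha))=v_\KL(\alpha)$ for totally ramified extensions. I expect the only delicate point to be the final reconciliation, where Ore's single quantity $\min\{v_\pi(b_0)n,v_\pi(n)n\}$ must be matched with the realizability condition by folding in both the hidden constraint $J_0\equiv b_0\pmod n$ and the degenerate case $b_0=0$.
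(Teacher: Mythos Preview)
Your argument is correct and is essentially the classical proof. The paper does not actually prove Proposition~\ref{prop.ore}; it is quoted as a known result of Ore, and Lemma~\ref{lem ore coeff} is likewise stated without proof. The key no-cancellation observation you isolate---that the summands of $\varphi'(\piL)$ have valuations lying in pairwise distinct residue classes modulo $n$---is exactly the standard mechanism, and your final reconciliation of the realizability condition $v_\pi(b_0)n+b_0\le J_0\le v_\pi(n)n$ with the stated form $\min\{v_\pi(b_0)n,v_\pi(n)n\}\le J_0\le v_\pi(n)n$ via the congruence $J_0\equiv b_0\pmod n$ is accurate, including the degenerate $b_0=0$ case.

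If you want a point of comparison, the paper does later recover and extend Ore's conditions by a different route. Lemma~\ref{lem ore gen} proves the analogous inequalities for every point $(p^{s_i},J_i)$ on the ramification polygon, deducing them from Equation~(\ref{eq val coeff rho}) for the coefficients of the ramification polynomial $\rho(x)=\varphi(\alpha x+\alpha)/\alpha^n$ together with Lemma~\ref{lem vphi}; specializing to $s_i=0$ (so that $\binom{b_0}{1}=b_0$ and $\binom{n}{1}=n$) gives back the necessity direction of Proposition~\ref{prop.ore}, while the explicit construction in the proof of Proposition~\ref{prop ram pol iff} supplies sufficiency. That approach trades the single derivative $\varphi'(\piL)$ for the full ramification polynomial, which is what the rest of the paper needs anyway; your direct argument is shorter and self-contained for this one statement.
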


The proof of Ore's conditions yields a certain form for the generating polynomials 
of extensions with given discriminant.

\begin{lemma}\label{lem ore coeff}
An Eisenstein polynomial $\varphi\in\OK[x]$ 
with discriminant $(\pi)^{n+J_0-1}$ where $J_0=a_0n+b_0$ with $0\le b_0<n$ fulfills Ore's conditions
if and only if 
\begin{align*}
v_\pi(\varphi_i) &\ge \max\{ 2 + a_0 - v_\pi(i), 1\}  \mbox{ for $0 < i < b_0$}, \\
v_\pi(\varphi_{b_0}) & = \max\{ 1 + a_0 - v_\pi(b_0), 1\}, \\
v_\pi(\varphi_i) & \ge \max\{ 1 + a_0 - v_\pi(i), 1\}  \mbox{ for $b_0 < i < n$}. 
\end{align*}
\end{lemma}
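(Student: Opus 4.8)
The plan is to reduce the statement to one identity expressing $v_\pi(\disc\varphi)$ through the coefficients. Let $\alpha$ be a root of $\varphi$ and $\KL=\KK(\alpha)$; as $\varphi$ is Eisenstein, $\KL/\KK$ is totally ramified of degree $n$, $\alpha$ is a uniformizer of $\OL$, and $\OL=\OK[\alpha]$, so the different is $\mathfrak{d}_{\KL/\KK}=(\varphi'(\alpha))$ and $v_\pi(\disc\varphi)=v_\pi\bigl(\norm_{\KL/\KK}(\varphi'(\alpha))\bigr)=n\,v_\pi(\varphi'(\alpha))$. The nonzero terms of $\varphi'(\alpha)=\sum_{i=1}^{n} i\varphi_i\alpha^{i-1}$ have pairwise distinct valuations — the $i$-th is $v_\pi(i)+v_\pi(\varphi_i)+\tfrac{i-1}{n}$, and the fractional parts $\tfrac{i-1}{n}$ distinguish the indices $1\le i\le n$ — so there is no cancellation and
\[
v_\pi(\disc\varphi)=\min_{1\le i\le n}\bigl\{\,n\,v_\pi(i)+n\,v_\pi(\varphi_i)+i-1\,\bigr\}.
\]
Given $\disc\varphi=(\pi)^{n+J_0-1}$ this minimum equals $(a_0+1)n+b_0-1$, and I would extract from that single equation both Ore's conditions and the coefficient conditions; conversely the coefficient conditions, together with the standing assumption that $J_0$ lies within the bounds of Proposition~\ref{prop.ore}, recover the discriminant.

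For the first direction: the $i=n$ term is $n\,v_\pi(n)+n-1$ (since $\varphi_n=1$), whence $(a_0+1)n+b_0-1\le n\,v_\pi(n)+n-1$, i.e.\ $J_0\le v_\pi(n)\,n$; and since the $i$-th term is $\equiv i-1\pmod n$ while $(a_0+1)n+b_0-1\equiv b_0-1$, the minimum is attained at the unique $i\in\{1,\dots,n\}$ with $i\equiv b_0\pmod n$, namely $i=b_0$ if $b_0\ge1$ and $i=n$ if $b_0=0$; unwinding the equality there gives $v_\pi(b_0)+v_\pi(\varphi_{b_0})=a_0+1$ (so $v_\pi(b_0)\le a_0$, hence $\min\{v_\pi(b_0)\,n,\,v_\pi(n)\,n\}\le J_0$) when $b_0\ge1$, and $v_\pi(n)=a_0$, $J_0=v_\pi(n)\,n$ when $b_0=0$ — in both cases Ore's conditions hold. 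For the coefficient conditions, the inequality $n\,v_\pi(i)+n\,v_\pi(\varphi_i)+i-1\ge(a_0+1)n+b_0-1$, divided by $n$ and sharpened by the integrality of $v_\pi(i)+v_\pi(\varphi_i)$ together with $-1<\tfrac{b_0-i}{n}<1$, yields $v_\pi(\varphi_i)\ge a_0+2-v_\pi(i)$ for $0<i<b_0$ and $v_\pi(\varphi_i)\ge a_0+1-v_\pi(i)$ for $b_0<i<n$, which with the Eisenstein bound $v_\pi(\varphi_i)\ge1$ are the first and third lines; the equality at $i=b_0$, rearranged using $v_\pi(b_0)\le a_0$, gives $v_\pi(\varphi_{b_0})=a_0+1-v_\pi(b_0)=\max\{a_0+1-v_\pi(b_0),1\}$, the middle line, which for $b_0=0$ degenerates to the Eisenstein condition $v_\pi(\varphi_0)=1$.

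Conversely, assuming the coefficient conditions and that $J_0$ satisfies Ore's conditions (so that $v_\pi(b_0)\le a_0$ when $b_0\ge1$ and $a_0=v_\pi(n)$ when $b_0=0$), I would substitute back into each term: at $i=b_0$ it equals $(a_0+1)n+b_0-1$ exactly, while for every other $i$ — using $i\ge1$, $b_0<n$, and $v_\pi(n)\ge a_0+1$ for the term $i=n$ — it is strictly larger, so the minimum, and hence $v_\pi(\disc\varphi)$, is $(a_0+1)n+b_0-1=n+J_0-1$. The conceptual content sits entirely in the displayed identity, which is immediate once one sees that the $n$ terms occupy distinct residue classes modulo $\Z$; the only genuine work — and the place that needs care — is the case analysis in this last step: keeping the ranges $0<i<b_0$, $i=b_0$, $b_0<i<n$ apart, getting strict rather than weak inequalities away from $i=b_0$, and handling the degenerate case $b_0=0$ in which the middle condition and the empty first range both collapse to the Eisenstein condition.
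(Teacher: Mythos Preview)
Your argument is correct. The paper does not supply a proof of this lemma; it merely remarks that the statement falls out of the proof of Ore's conditions and cites \cite{krasner,pauli-roblot}. Your derivation is precisely that underlying argument: the identity
\[
v_\pi(\disc\varphi)=v_\alpha(\varphi'(\alpha))=\min_{1\le i\le n}\bigl\{n\,v_\pi(i)+n\,v_\pi(\varphi_i)+i-1\bigr\},
\]
obtained because the summands in $\varphi'(\alpha)$ occupy distinct residue classes modulo $\Z$, followed by reading off the minimizing index from the residue $b_0-1$ and taking ceilings. Note that the paper does essentially the same computation later, in a more general form, when it proves Lemma~\ref{lem vphi}: Equation~(\ref{eq val coeff rho}) at $i=1$ reads $J_0=v_\alpha(\rho_1)=\min_{1\le k\le n}\{n[v_\pi(k\varphi_k)-1]+k\}$, which together with $\rho_1=\varphi'(\alpha)/\alpha^{n-1}$ is exactly your displayed identity, and the ceiling step there (``As $v_\pi(\varphi_k)$ is an integer, we may take the ceiling of the fraction'') matches your integrality sharpening. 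So your proof is the same as the one the paper points to and later generalizes.
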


Krasner's Lemma yields a bound over which the coefficients of the $\pi$-adic expansion of the coefficients 
of a generating polynomial can be chosen to be $0$ \cite{krasner}.

\begin{lemma}\label{lem krasner bound}
Each totally ramified extension of degree $n$ with discriminant $(\pi)^{n+J_0-1}$
where $J_0=a_0n+b_0$ with $0\le b_0<n$ can be generated by an Eisenstein polynomial 
$\varphi\in\OK[x]$ with
$\varphi_{i,j}=0$ for $0\le i < n$ and $j>1 + 2a_0 + \frac{2b_0}{n}$.
\end{lemma}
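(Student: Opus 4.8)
The plan is to follow Krasner's classical strategy: realise the given extension as $\KK(\alpha)$ for a root $\alpha$ of an Eisenstein polynomial of degree $n$, and then show that chopping off the tail of the $\pi$-adic expansions of its coefficients produces another polynomial generating the same field. Concretely, pick a uniformizer $\alpha$ of the valuation ring $\OL$ of the extension $\KL/\KK$ in question, so that $\OL=\OK[\alpha]$ and the minimal polynomial $\varphi\in\OK[x]$ of $\alpha$ is Eisenstein of degree $n$ with $\KK(\alpha)=\KL$. Let $\widetilde\varphi$ be obtained from $\varphi$ by setting $\varphi_{i,j}=0$ for $i<n$ and $j>M$, where $M$ is the largest integer with $M\le 1+2a_0+\tfrac{2b_0}{n}$. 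Since $M\ge 1$, truncating changes no coefficient modulo $(\pi)^{M+1}$, so $\widetilde\varphi$ is again Eisenstein of degree $n$ and hence irreducible; therefore it suffices to exhibit a root $\beta$ of $\widetilde\varphi$ with $\KK(\beta)=\KL$, and $\widetilde\varphi$ will be the required generating polynomial. (If $\widetilde\varphi(\alpha)=0$ then $\widetilde\varphi=\varphi$ and there is nothing to do, so I assume $\widetilde\varphi(\alpha)\ne0$.)

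To apply Krasner's Lemma I first need to bound how spread out the conjugates of $\alpha$ are. The discriminant hypothesis says $v_\pi(\disc\varphi)=n+J_0-1$; since $\KL/\KK$ is totally ramified of degree $n$ and $\disc\varphi=\pm\mathrm N_{\KL/\KK}(\varphi'(\alpha))$ with $v_\pi\circ\mathrm N_{\KL/\KK}=n\,v_\pi$ on $\KL$, this gives $v_\pi(\varphi'(\alpha))=(n+J_0-1)/n$. Now $\varphi'(\alpha)=\prod_{\alpha'\ne\alpha}(\alpha-\alpha')$ is a product of $n-1$ factors, each of valuation at least $1/n$ because every root of an Eisenstein polynomial of degree $n$ has valuation $1/n$; discarding all but the largest factor gives the Krasner radius bound $\max_{\alpha'\ne\alpha}v_\pi(\alpha-\alpha')\le(J_0+1)/n$.

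The heart of the argument is to place a root of $\widetilde\varphi$ inside this radius. The naive estimate is too weak: from $v_\pi(\widetilde\varphi(\alpha))=v_\pi(\widetilde\varphi(\alpha)-\varphi(\alpha))\ge M+1$ one only gets, after distributing the valuation over the $n$ roots, some root $\beta$ with $v_\pi(\beta-\alpha)\ge(M+1)/n$, which would force $M$ to be of size about $J_0$. The improvement comes from also using the linear term: since the truncation is negligible next to $v_\pi(\varphi'(\alpha))$, one still has $v_\pi(\widetilde\varphi'(\alpha))=(n+J_0-1)/n$, and writing $\widetilde\varphi'(\alpha)=\pm\sum_k\prod_{l\ne k}(\beta_l-\alpha)$ over the roots $\beta_1,\dots,\beta_n$ of $\widetilde\varphi$, the ultrametric inequality yields $v_\pi(\widetilde\varphi'(\alpha))\ge\big(\sum_k v_\pi(\beta_k-\alpha)\big)-\max_k v_\pi(\beta_k-\alpha)=v_\pi(\widetilde\varphi(\alpha))-\max_k v_\pi(\beta_k-\alpha)$. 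Hence some root $\beta$ satisfies $v_\pi(\beta-\alpha)\ge(M+1)-(n+J_0-1)/n$, and a short computation shows that the chosen $M>\tfrac{2J_0}{n}$ makes this strictly larger than $(J_0+1)/n$, hence strictly larger than $\max_{\alpha'\ne\alpha}v_\pi(\alpha-\alpha')$.

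Krasner's Lemma then gives $\KK(\alpha)\subseteq\KK(\beta)$, and since $\beta$ is a root of the irreducible polynomial $\widetilde\varphi$ of degree $n$, we get $[\KK(\beta):\KK]=n=[\KK(\alpha):\KK]$ and therefore $\KK(\beta)=\KK(\alpha)=\KL$, completing the proof. I expect the one real obstacle to be the sharp estimate in the third paragraph — specifically the realisation that one must combine the information from $\widetilde\varphi(\alpha)$ with that from $\widetilde\varphi'(\alpha)$ rather than use $\widetilde\varphi(\alpha)$ alone — together with the bookkeeping that pins down the exact cutoff: the inequality to be satisfied is $M>2J_0/n=2a_0+2b_0/n$, and the largest integer not exceeding $1+2a_0+2b_0/n$ is exactly such an $M$, which is why the bound in the statement has the shape $1+2a_0+\tfrac{2b_0}{n}$. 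The factor $2$ should be traceable to the sum of two contributions of size roughly $J_0/n$: one from $v_\pi(\widetilde\varphi'(\alpha))$ and one from the Krasner radius $\max_{\alpha'\ne\alpha}v_\pi(\alpha-\alpha')$.
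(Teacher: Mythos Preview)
The paper does not give its own proof of this lemma; it merely cites Krasner \cite{krasner} (and implicitly \cite{pauli-roblot}). Your argument is correct and is exactly the classical Krasner computation those references contain: bound the Krasner radius by $(J_0+1)/n$ from the discriminant, then combine $v_\pi(\widetilde\varphi(\alpha))\ge M+1$ with $v_\pi(\widetilde\varphi'(\alpha))=(n+J_0-1)/n$ via $\widetilde\varphi'(\alpha)=\sum_k\prod_{l\ne k}(\alpha-\beta_l)$ to locate a root $\beta$ of the truncation with $v_\pi(\beta-\alpha)\ge M-(J_0-1)/n>(J_0+1)/n$, and finish with Krasner's Lemma.
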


With Lemma \ref{lem ore coeff} and Lemma \ref{lem krasner bound} we obtain a finite set of polynomials that generate all extensions 
of a given degree and discriminant.  In \cite{pauli-roblot} this set in conjunction with Krasner's mass formula \cite{krasner} 
and Panayi's root finding algorithm is used to obtain a generating polynomial for each extension of a given degree and discriminant.

\begin{example}\label{ex J10}
We want to find generating polynomials for all totally ramified 
extensions $\KL$ of $\Q_3$ of degree 9 with $v_3(\disc(\KL))=18$.
Denote by $\varphi=\sum_{i=0}^9\varphi_i x^i$ an Eisenstein polynomial generating such a field $\KL$.
By Lemma \ref{lem ore coeff} with $J_0=10$, $a_0=1$, and $b_0=1$ we get
\nrt{$v_\pi(\varphi_1)  = 2$} and
\ngt{$v_\pi(\varphi_i)  = 2 - v_\pi(i)$} for \ngt{$1 < i < n$}.
Furthermore by Lemma \ref{lem krasner bound} 
\nbt{$\varphi_{i,j}=0$ for $0\le i\le 9$ and $j>3$}.
Thus the template for the polynomials $\varphi$ is:
\begin{center}
\small
\renewcommand\arraystretch{1.1}
\renewcommand\tabcolsep{2pt}
\begin{tabular}{l|cccccccccc}
             &$x^9$    &$x^8$      &$x^7$      &$x^6$      &$x^5$      &$x^4$      &$x^3$      &$x^2$      &$x^1$        &$x^0$ \\\hline
      $3^4$&$\{0\}$    &\nbe{\{0\}}&\nbe{\{0\}}&\nbe{\{0\}}&\nbe{\{0\}}&\nbe{\{0\}}&\nbe{\{0\}}&\nbe{\{0\}}&\nbe{\{0\}}  &\nbe{\{0\}}  \\
      $3^3$&$\{0\}$    &$\{0,1,2\}$&$\{0,1,2\}$&$\{0,1,2\}$&$\{0,1,2\}$&$\{0,1,2\}$&$\{0,1,2\}$&$\{0,1,2\}$&$\{0,1,2\}$  &$\{0,1,2\}$  \\
      $3^2$&$\{0\}$    &$\{0,1,2\}$&$\{0,1,2\}$&$\{0,1,2\}$&$\{0,1,2\}$&$\{0,1,2\}$&$\{0,1,2\}$&$\{0,1,2\}$&\nre{\{1,2\}}&$\{0,1,2\}$  \\
      $3^1$&$\{0\}$    &\nge{\{0\}}&\nge{\{0\}}&$\{0,1,2\}$&\nge{\{0\}}&\nge{\{0\}}&$\{0,1,2\}$&\nge{\{0\}}&\nre{\{0\}}  &$\{1,2\}$ \\
      $3^0$&$\{1\}$    &$\{0\}$    &$\{0\}$    &$\{0\}$    &$\{0\}$    &$\{0\}$    &$\{0\}$    &$\{0\}$    &$\{0\}$      &$\{0\}$
\end{tabular}
\end{center}
\end{example}


\section{Ramification Polygons}\label{sec ram pol}

To distinguish totally ramified extensions further we use 
an additional invariant, namely the {ramification polygon}.  

\begin{definition}\label{def ram pol}
Assume that the Eisenstein polynomial $\varphi$ defines $\KL/\KK$.  The \emph{ramification polygon} $\rpol_\varphi$ of $\varphi$ is the Newton polygon $\mathcal{N}$ of the \emph{ramification polynomial} $\rho(x)=\varphi(\alpha x + \alpha)/(\alpha^n)\in  K(\alpha)[x]$ of $\varphi$, where $\alpha$ is a root of $\varphi$.
\end{definition}
The ramification polygon $\rpol_\varphi$ of $\varphi$ is an invariant of $\KL/\KK$ (see \cite[Proposition 4.4]{greve-pauli} for example) called the ramification polygon of $\KL/\KK$ denoted by $\rpol_{\KL/\KK}$.
Ramification polygons have been used to study ramification groups and reciprocity \cite{scherk},
compute splitting fields and Galois groups \cite{greve-pauli},
describe maximal abelian extensions \cite{lubin},
and answer questions of commutativity in $p$-adic dynamical systems \cite{li}.

Let $\varphi(x)=\sum_{i=0}^{n}\varphi_ix^i\in\KK[x]$ be an Eisenstein polynomial, denote by $\alpha$ a root of $\varphi$, 
and set $\KL=\KK(\alpha)$. Let $\rho(x)=\sum_{i=0}^{n}\rho_i x^i\in\KL[x]$ be the ramification polynomial of $\varphi$.
Then the coefficients of $\rho$ are
\[
\rho_i=\sum_{k=i}^n\binom{k}{i}\; \varphi_k\; \alpha^{k-n} 
\]
As $v_\alpha(\alpha)=1$ and $v_\alpha(\varphi_i)\in n\Z$ we obtain
\begin{equation}\label{eq val coeff rho}
v_\alpha(\rho_i) = \min_{i\leq k \leq n} \left\lbrace v_\alpha \left( \binom{k}{i}\; \varphi_k\; \alpha^k \right) - n\right\rbrace
                    = \min_{i\leq k\leq n}\left\lbrace n \left[ v_\pi \left( \binom{k}{i}\; \varphi_k \right) - 1\right] + k \right\rbrace.
\end{equation}

\begin{lemma}[{\cite[Lemma 1]{scherk}}] \label{lem scherk}
    Let $\varphi(x)=\sum_{i=0}^{n}\varphi_ix^i\in
  \KK[x]$ be an Eisenstein polynomial and $n=e_0 p^m$ with $p \nmid e_0$.
  Denote by $\alpha$ a root of $\varphi$ and set
  $\KL=\KK(\alpha)$. Then the following hold for the coefficients of the ramification polynomial
  $\rho(x)=\sum_{i=0}^{n}\rho_i x^i=\varphi(\alpha x+\alpha)/\alpha^n\in\OL[x]$ of $\varphi$:
  \begin{enumerate}
    \item $v_\alpha(\rho_i)\ge 0$ for all $i$;
    \item $v_\alpha(\rho_{p^m})=v_\alpha(\rho_n)=0$;
    \item $v_\alpha(\rho_i)\ge v_\alpha(\rho_{p^s})\mbox{ for }p^s\le
             i<p^{s+1}\mbox{ and }s<m$.
  \end{enumerate}
\end{lemma}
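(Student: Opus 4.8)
The plan is to derive all three statements directly from the valuation formula \eqref{eq val coeff rho}, using the fact that $\varphi$ is Eisenstein. Recall that for $0 < k < n$ we have $v_\pi(\varphi_k) \ge 1$, so the term $n[v_\pi(\binom{k}{i}\varphi_k) - 1] + k$ is at least $k \ge 1 > 0$, while for $k = n$ we have $\varphi_n = 1$, giving the term $n[v_\pi(\binom{n}{i}) - 1] + n = n\, v_\pi(\binom{n}{i})$, which is nonnegative. Taking the minimum over $i \le k \le n$ yields $v_\alpha(\rho_i) \ge 0$, which is statement (1).

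For statement (2), first observe that the $k = n$ term in \eqref{eq val coeff rho} is $n\, v_\pi(\binom{n}{i})$. When $i = p^m$, Kummer's theorem on the $p$-adic valuation of binomial coefficients gives $v_p(\binom{n}{p^m}) = v_p(\binom{e_0 p^m}{p^m}) = v_p(e_0) = 0$ since $p \nmid e_0$; hence $v_\pi(\binom{n}{p^m}) = 0$ as well (the residue field extension is trivial for the base change to $\KL$, so $v_\pi$ of a rational integer is $v_p$ of it times $e(\KK/\Qp)$, but in any case it is $0$ exactly when $p \nmid \binom{n}{p^m}$). So the $k = n$ term vanishes, and combined with (1) this forces $v_\alpha(\rho_{p^m}) = 0$. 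The case $i = n$ is immediate since the $k = n$ term is $n\, v_\pi(\binom{n}{n}) = 0$. This proves (2).

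For statement (3), fix $s < m$ and $p^s \le i < p^{s+1}$; I want $v_\alpha(\rho_i) \ge v_\alpha(\rho_{p^s})$. Comparing the two minima in \eqref{eq val coeff rho}, it suffices to show that for each $k$ with $i \le k \le n$, the term for index $i$ is at least the term for index $p^s$ at \emph{some} admissible $k'$; the natural choice is $k' = k$, and then the inequality reduces to $v_\pi(\binom{k}{i}) \ge v_\pi(\binom{k}{p^s})$, i.e. $v_p(\binom{k}{i}) \ge v_p(\binom{k}{p^s})$. This is where I expect the real work to be, and it is the crux of the argument: it is a statement purely about carries in base-$p$ addition. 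By Kummer's theorem, $v_p(\binom{k}{j})$ equals the number of carries when adding $j$ and $k - j$ in base $p$. Writing $i = p^s \cdot a + r$ with $0 \le r < p^s$ and $1 \le a < p$ (using $p^s \le i < p^{s+1}$), one shows that every base-$p$ carry occurring in $p^s + (k - p^s)$ also occurs, suitably shifted, in $i + (k - i)$, because the low-order digits of $i$ below position $s$ can only create additional carries. I would make this precise by induction on the digits of $k$ from the least significant upward, tracking the carry bit. The remaining cases — $k$ ranging only over those values where the index-$i$ minimum is actually attained — are handled the same way, and the boundary subtleties (e.g. ensuring $k \ge i$ is compatible with $k \ge p^s$, which holds since $i \ge p^s$) are routine.

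The main obstacle, then, is the combinatorial lemma on $p$-adic valuations of binomial coefficients comparing $v_p(\binom{k}{i})$ and $v_p(\binom{k}{p^s})$ for $p^s \le i < p^{s+1}$; everything else is bookkeeping with \eqref{eq val coeff rho} and the Eisenstein hypothesis. An alternative, perhaps cleaner, route for (3) is to argue via the Newton polygon directly: statements (1) and (2) already pin down that $(p^s, v_\alpha(\rho_{p^s}))$ and $(p^{s+1}, v_\alpha(\rho_{p^{s+1}}))$ are candidate vertices, and one can invoke the known structure of the ramification polygon (its vertices have $x$-coordinates that are powers of $p$, as in \cite{greve-pauli}); but since the lemma is attributed to \cite{scherk} and is meant to be elementary, I would present the Kummer-theorem proof.
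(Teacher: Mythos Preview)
The paper does not supply its own proof of this lemma; it is quoted verbatim from \cite{scherk}. Your approach via the valuation formula \eqref{eq val coeff rho} is sound and is the natural way to reconstruct the argument. Parts (a) and (b) are complete as written.

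For part (c) you correctly isolate the combinatorial heart of the matter, namely the inequality $v_p\binom{k}{i}\ge v_p\binom{k}{p^s}$ for $p^s\le i<p^{s+1}$ and $k\ge i$, but you leave it as the ``main obstacle'' with only a rough plan involving carry-tracking. In fact it has a one-line proof that bypasses the induction you sketch. Using Legendre's formula $v_p\binom{N}{M}=\frac{s_p(M)+s_p(N-M)-s_p(N)}{p-1}$, the desired inequality is equivalent to
\[
s_p(i)+s_p(k-i)\ \ge\ 1+s_p(k-p^s).
\]
Set $j=i-p^s$. Since $p^s\le i<p^{s+1}$, the digit of $i$ in position $s$ is at least $1$, so subtracting $p^s$ causes no borrow and $s_p(i)=1+s_p(j)$. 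The inequality becomes $s_p(j)+s_p\bigl((k-p^s)-j\bigr)\ge s_p(k-p^s)$, which is nothing but the trivial statement $v_p\binom{k-p^s}{\,i-p^s\,}\ge 0$. This closes the gap cleanly; your alternative route via the known vertex structure of the ramification polygon would, as you suspect, be circular, since that structure is precisely what this lemma establishes.
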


This gives the typical shape of the ramification polygon (see Figure \ref{fig ram pol}).

\begin{remark}
Throughout this paper we describe ramification polygons by the set of points 
\[\ppol=\rpolypoints\] where not all points in $\ppol$ have to be vertices 
of the polygon $\rpol$.
We write $\rpol=\ppol$.
This gives a finer distinction between fields by their ramification polygons and also 
allows for an easier description of the invariant based on the residual 
polynomials of the segments of the ramification polygon, see Section \ref{sec res seg}.
\end{remark}

\begin{figure}
\begin{center}
\begin{tikzpicture}[scale=0.6]
\draw[thick] (-1,0) -- (5,0);
\draw[thick, loosely dashed] (5,0) -- (8,0);
\draw[thick,->] (8,0) -- (20.5,0) node[anchor=west] {$i$};
\draw[thick,->] (-1,0) -- (-1,10.5) node[anchor=south] {$v_\alpha(\rho_i)$};

\draw[thick] (-0.5,10) -- (1,6) -- (4.9,2.75) ;
\draw[thick] (10,1) -- (15.5,0) -- (20,0);
\filldraw[black] (-0.5,10) circle (3pt)
                 (1,6) circle (3pt)
                 (2.75,4.5) circle (3pt)
                 (4.9,2.75) circle (3pt)
                 (10,1) circle (3pt)
                 (15.5,0) circle (3pt)
                 (20,0) circle (3pt);
\filldraw[black] (5.75,2.4) circle (1pt)
                 (6.7,2.05) circle (1pt)
                 (7.75,1.7) circle (1pt)
                 (8.8,1.35) circle (1pt);
\filldraw[black] (17,0) circle (3pt)
                 (18.5,0) circle (3pt);

\foreach \x/\xtext in {-0.5/1, 0.95/p^{s_1}\!\!, 2.75/p^{s_2}\!\!, 4.9/p^{s_3}\!\!, 10/p^{s_{u-1}}\!\!, 15.5/p^{s_u}=p^{v_p(n)}\!\!, 20/n}
  \draw[thick] (\x,0) -- (\x,-0.2) node[anchor=north] {$\xtext$};

\draw (0.5,8)     node[anchor=west] {$-\lambda_1$}
      (2.75,4.75) node[anchor=west] {$-\lambda_2$}
      (11.5,1.25) node[anchor=west] {$-\lambda_\ell$}
      (-0.5,10)   node[anchor=west] {$(1,J_0)$}
      (1,6.25)    node[anchor=west] {$(p^{s_1},J_1)$}
      (3.0,4)     node[anchor=east] {$(p^{s_2},J_2)$}
      (4.9,3)     node[anchor=west] {$(p^{s_3},J_3)$}
      (10,1)      node[anchor=east] {$(p^{s_{u-1}},J_{u-1})$}
      (14.5,1)    node[anchor=west] {$(p^{s_u},0)$}
      (20,1)      node[anchor=center] {$(n,0)$} ;
\end{tikzpicture}
\end{center}
\caption{Ramification polygon of an Eisenstein polynomial $\varphi$ of degree $n$ and discriminant $(\pi)^{n+J_0-1}$
with $\ell+1$ segments and $u-1$ points on the polygon with ordinate above $0$.}\label{fig ram pol}
\end{figure}
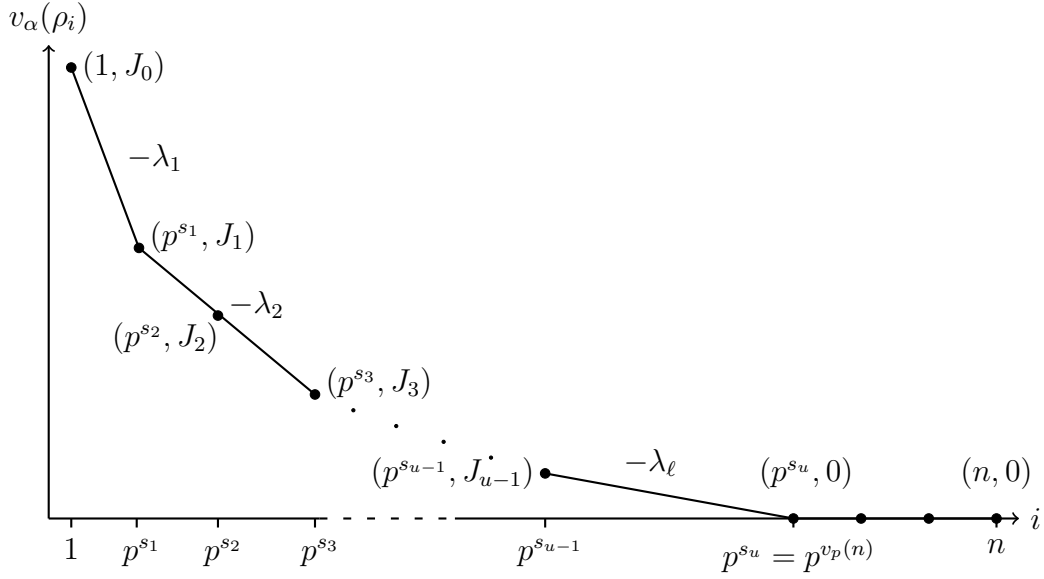

We now investigate the points on a ramification polygon further.

\begin{lemma}\label{lem rho sim}
Let $\rho=\sum_{i=1}^n\rho_i x^i$ be the ramification polynomial of an Eisenstein polynomial
$\varphi(x)=\sum_{i=0}^{n}\varphi_ix^i\in\OK[x]$.
Denote by
\[
\rpolypoints\subseteq \{(i,v_\alpha(\rho_i)):1\le i \le n\}
\]
the points on the ramification polygon of $\varphi$
and write $J_i=a_i n+b_i$ with $0\le b_i< n$.
\begin{enumerate}
\item {For $p^{s_u}\le i \le n$ we have $v_\alpha(\rho_i)=0$ and $\rho_i\equiv \binom{n}{i} \bmod (\alpha)$ if and only if $v_\alpha\binom{n}{i}=0$.}
\item For $0\le i\le u$ we have
\[
\rho_{p^{s_i}}\sim \varphi_{b_i}\binom{b_i}{p^{s_i}}\alpha^{b_i-n}.
\]
\end{enumerate}
\end{lemma}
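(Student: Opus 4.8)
The plan is to read off both statements directly from the formula~\eqref{eq val coeff rho} for $v_\alpha(\rho_i)$, which expresses $v_\alpha(\rho_i)$ as a minimum over $i\le k\le n$ of the quantities $n[v_\pi(\binom{k}{i}\varphi_k)-1]+k$, together with the fact that $\varphi$ is Eisenstein, so $v_\pi(\varphi_k)\ge 1$ for $0\le k<n$ and $v_\pi(\varphi_n)=0$.

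For part (a): when $p^{s_u}\le i\le n$ we are on the bottom (horizontal) part of the polygon, so $v_\alpha(\rho_i)=0$. The only way the minimum in~\eqref{eq val coeff rho} can equal $0$ is for the term $k=n$ to contribute $0$, i.e. $n[v_\pi(\binom{n}{i})-1]+n=0$, which forces $v_\pi(\binom{n}{i})=0$; every other term $k<n$ contributes $n[v_\pi(\binom{k}{i}\varphi_k)-1]+k\ge k>0$ (using $v_\pi(\varphi_k)\ge 1$), hence is strictly positive and cannot lower the value. So the minimum is attained uniquely at $k=n$ precisely when $v_\alpha\binom{n}{i}=0$, and in that case $\rho_i\equiv\binom{n}{i}\varphi_n\alpha^{n-n}=\binom ni\pmod{(\alpha)}$ since $\varphi_n=1$. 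Conversely, if $v_\alpha\binom ni>0$ then the $k=n$ term is $>0$ and $\rho_i\not\equiv\binom ni\pmod{(\alpha)}$ (indeed $\binom ni\equiv 0$). This gives the ``if and only if''.

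For part (b): fix $i$ with $0\le i\le u$ and write $p^{s_i}$ for the abscissa, $J_i=a_in+b_i$ with $0\le b_i<n$ for the ordinate, so $v_\alpha(\rho_{p^{s_i}})=J_i$. I want to identify which $k$ achieves the minimum in~\eqref{eq val coeff rho} with $i$ replaced by $p^{s_i}$. The claim is that $k=b_i$ is the unique minimizer. The term $k=b_i$ contributes $n[v_\pi(\binom{b_i}{p^{s_i}}\varphi_{b_i})-1]+b_i$; since $\varphi$ is Eisenstein $v_\pi(\varphi_{b_i})\ge 1$, and for the polygon point to have ordinate $J_i=a_in+b_i$ one needs this term to equal $a_in+b_i$, i.e. $v_\pi(\binom{b_i}{p^{s_i}}\varphi_{b_i})=a_i+1$. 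I would argue that all other terms $k$ in the range $p^{s_i}\le k\le n$ are strictly larger: because $k\equiv b_i\pmod n$ only for $k=b_i$ in this range (as $0\le b_i<n$ and the gap to the next residue is $n$), any $k\ne b_i$ forces the contribution into a different residue class mod $n$ or a strictly higher value, using Lemma~\ref{lem scherk} and the convexity/position of the polygon to rule out ties. Once $k=b_i$ is the unique minimizer, $\rho_{p^{s_i}}$ is congruent to its leading term, i.e. $\rho_{p^{s_i}}\sim\binom{b_i}{p^{s_i}}\varphi_{b_i}\alpha^{b_i-n}$, which is exactly the assertion (after reconciling the binomial coefficient notation $\binom{b_i}{p^{s_i}}$ with the displayed $\binom{b_i}{p^{s_i}}$).

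The main obstacle is the uniqueness of the minimizer in part (b): I must rule out the possibility that two distinct values of $k$ give the same value $n[v_\pi(\binom k{p^{s_i}}\varphi_k)-1]+k$, since then $\rho_{p^{s_i}}$ would only be determined up to a sum of leading terms and the clean ``$\sim$'' statement could fail. The key point is that two such terms are congruent mod $n$ only if $k\equiv k'\pmod n$, and in the relevant window this pins down $k$; for the boundary behavior I would invoke the shape of the ramification polygon from Lemma~\ref{lem scherk} (parts (2) and (3)) to confirm that the vertex/point $(p^{s_i},J_i)$ is controlled by the single coefficient $\varphi_{b_i}$. The rest is bookkeeping with~\eqref{eq val coeff rho} and the Eisenstein conditions.
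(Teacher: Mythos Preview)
The paper does not actually supply a proof of this lemma; it is stated and then used, so there is nothing to compare against. I will therefore assess your argument on its own.

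Your approach is the right one, and part~(a) is essentially fine once you drop the opening claim ``we are on the bottom part of the polygon, so $v_\alpha(\rho_i)=0$'': that is what you are proving, not assuming. The computation that follows is correct --- every term with $k<n$ contributes at least $k\ge p^{s_u}\ge1$, while the $k=n$ term contributes $n\,v_\pi\binom{n}{i}$, so $v_\alpha(\rho_i)=0$ iff $v_\pi\binom{n}{i}=0$, and in that case the unique minimizer $k=n$ gives $\rho_i\sim\binom{n}{i}$.

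For part~(b) you have the decisive observation but then talk yourself out of it. Each term $n\bigl[v_\pi\bigl(\binom{k}{p^{s_i}}\varphi_k\bigr)-1\bigr]+k$ lies in the residue class $k\bmod n$. Since the minimum equals $J_i=a_in+b_i$, any term attaining it must have $k\equiv b_i\pmod n$; in the range $p^{s_i}\le k\le n$ this forces $k=b_i$ (when $b_i\ne0$) or $k=n$ (when $b_i=0$). Every other term is $\ge J_i$ and in a different residue class, hence strictly $>J_i$. That is already the uniqueness you want --- there is no need to appeal to Lemma~\ref{lem scherk} or convexity to ``rule out ties'', and your final paragraph of worry can be deleted. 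Note also that your argument implicitly needs $b_i\ge p^{s_i}$ when $b_i\ne0$; this follows from the same residue observation (otherwise no $k$ in range has the right residue), and is the content of the later Lemma~\ref{lem ps div js}. Finally, observe that for $b_i=0$ the displayed formula in~(b) degenerates (since $\binom{0}{p^{s_i}}=0$); the correct conclusion in that case is $\rho_{p^{s_i}}\sim\binom{n}{p^{s_i}}$, coming from $k=n$, which is how the paper uses the lemma (cf.\ the hypothesis $b_i\ne0$ in Lemma~\ref{lem A fix}).
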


It follows from (a) that, modulo $(\alpha)$,
the coefficients of the ramification polynomial that correspond to the horizontal segment of its Newton polygon
only depend on the degree of $\varphi$. 

\begin{lemma}\label{lem vphi}
If the ramification polygon of an Eisenstein polynomial
$\varphi\in\OK[x]$ has the points
$\rpolypoints$
where $J_i = a_in+b_i$ with $0\leq b_i\leq n-1$.
Then for $0\leq t\leq u$, we have
\[v_\pi(\varphi_i) \geq \left\{\begin{array}{ll}
  2+a_t-v_\pi \binom{i}{p^{s_t}} \mbox{ for } p^{s_t} \leq i < b_t\\[3\jot]
  1+a_t-v_\pi \binom{i}{p^{s_t}} \mbox{ for } b_t \leq i \leq n-1\\
  \end{array}\right. \]
and $v_\pi(\varphi_{b_t})=a_t+1-v_\pi \binom{b_t}{p^{s_t}}$ if $b_t\neq 0$.
\end{lemma}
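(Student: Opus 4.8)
The plan is to read both claims off the valuation formula \eqref{eq val coeff rho}, using that $(p^{s_t},J_t)$ is a point of the ramification polygon, so that $v_\alpha(\rho_{p^{s_t}})=J_t=a_tn+b_t$ (by the convention for $\ppol$ fixed in the remark following Lemma~\ref{lem scherk}, a point of $\ppol$ is a point $(i,v_\alpha(\rho_i))$ lying on the lower hull of the Newton polygon of $\rho$).

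\emph{The inequalities.} Apply \eqref{eq val coeff rho} with $i=p^{s_t}$:
\[
a_tn+b_t=J_t=v_\alpha(\rho_{p^{s_t}})=\min_{p^{s_t}\le k\le n}\left\{n\left(v_\pi\!\left(\binom{k}{p^{s_t}}\varphi_k\right)-1\right)+k\right\},
\]
so each summand of the minimum is $\ge a_tn+b_t$, and rearranging gives
\[
v_\pi\binom{k}{p^{s_t}}+v_\pi(\varphi_k)\ \ge\ a_t+1+\frac{b_t-k}{n}\qquad\text{for }p^{s_t}\le k\le n.
\]
The left-hand side is an integer. For $p^{s_t}\le k<b_t$ we have $\frac{b_t-k}{n}>0$, so the right-hand side exceeds $a_t+1$, forcing the integer to be $\ge a_t+2$; for $b_t\le k\le n-1$ we have $\frac{b_t-k}{n}>-1$, so the right-hand side exceeds $a_t$, forcing the integer to be $\ge a_t+1$. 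Writing $i$ for $k$ gives exactly the two asserted bounds for $p^{s_t}\le i\le n-1$; for $i<p^{s_t}$ both bounds hold vacuously because then $\binom{i}{p^{s_t}}=0$, i.e.\ $v_\pi\binom{i}{p^{s_t}}=\infty$.

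\emph{The equality.} Suppose $b_t\ne0$ and apply Lemma~\ref{lem rho sim}(b): $\rho_{p^{s_t}}\sim\varphi_{b_t}\binom{b_t}{p^{s_t}}\alpha^{b_t-n}$. Since $(p^{s_t},J_t)$ is on the polygon, $\rho_{p^{s_t}}\ne0$; as the only pair related by $\sim$ containing $0$ is $0\sim0$, neither side is $0$, so $\binom{b_t}{p^{s_t}}\ne0$ (in particular $b_t\ge p^{s_t}$) and the two sides have equal $\alpha$-valuation. Using $v_\alpha=n\,v_\pi$ on $\KK$ and $v_\alpha(\alpha)=1$,
\[
a_tn+b_t=J_t=v_\alpha(\rho_{p^{s_t}})=n\,v_\pi(\varphi_{b_t})+n\,v_\pi\binom{b_t}{p^{s_t}}+b_t-n,
\]
and solving for $v_\pi(\varphi_{b_t})$ yields $v_\pi(\varphi_{b_t})=a_t+1-v_\pi\binom{b_t}{p^{s_t}}$.

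The computation is routine; the only points needing care are the bookkeeping that identifies $v_\alpha(\rho_{p^{s_t}})$ with $J_t$, the integrality argument separating the cases $k<b_t$ and $k\ge b_t$, and the degenerate reading $v_\pi(0)=\infty$ in the range $i<p^{s_t}$. I do not anticipate a genuine obstacle.
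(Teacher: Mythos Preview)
Your proof is correct and follows essentially the same approach as the paper. The inequality argument is identical: apply \eqref{eq val coeff rho} at $i=p^{s_t}$, rearrange each term of the minimum, and use integrality to round up depending on the sign of $b_t-k$. For the equality when $b_t\ne0$, you invoke Lemma~\ref{lem rho sim}(b) and take valuations, whereas the paper argues directly from \eqref{eq val coeff rho} that the minimum must be attained at $k=b_t$ (since the term congruence class mod $n$ is determined by $k$, and $0<b_t<n$); these are the same observation packaged differently, and both are sound. Your remark on the vacuous case $i<p^{s_t}$ is a harmless extra.
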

\begin{proof}
By Equation (\ref{eq val coeff rho}), for all $k$ with $s_t\leq k \leq n$,
\[ J_t = a_tn+b_t \leq n \left[ v_\pi \left( \binom{k}{p^{s_t}}\; \varphi_k \right) - 1\right] + k,\]
which solved for $v_\pi(\varphi_k)$ gives
\[ 
1 + a_t - v_\pi \binom{k}{p^{s_t}} + \frac{b_t-k}{n} \leq v_\pi(\varphi_k) \mbox{ for }s_t\le k\le n.
\]
As $v_\pi(\varphi_k)$ is an integer, we may take the ceiling of the fraction.
As $0\leq b_t \leq n-1$ and $p^{s_t}\leq k \leq n$, if $k<b_t$, then $\left\lceil \frac{b_t-k}{n} \right\rceil=1$, and if $k\geq b_t$, then $\left\lceil \frac{b_t-k}{n} \right\rceil=0$. Therefore,
\[v_\pi(\varphi_i) \geq \left\{\begin{array}{ll}
  2+a_t-v_\pi \binom{i}{p^{s_t}} \mbox{ for } p^{s_t} \leq i < b_t\\[3\jot]
  1+a_t-v_\pi \binom{i}{p^{s_t}} \mbox{ for } b_t \leq i \leq n-1\\
  \end{array}\right..\]

Now if we consider a point $(p^{s_t},a_tn+b_t)$ with $b_t\neq 0$, then by Equation (\ref{eq val coeff rho}) we have
\[
a_t n+ b_t= \min_{p^{s_t}\leq k\leq n}\left\lbrace n \left[ v_\pi \left( \binom{k}{p^{s_t}}\; \varphi_k \right) - 1\right] + k \right\rbrace,
\]
and as $0<b_t<n$, the minimum is attained at $k=b_t$.  Hence
$a_t=\left[ v_\pi \left( \binom{b_t}{p^{s_t}}\; \varphi_{b_t} \right) - 1\right]$
and $v_\pi(\varphi_{b_t})=a_t+1-v_\pi \binom{b_t}{p^{s_t}}$.  
\end{proof}

From this, we can generalize Ore's conditions (Proposition \ref{prop.ore}) 
from a statement about the exponent of the discriminant, which is related 
to the ordinate of the point above 1, to the ordinates of all points.

\begin{lemma}\label{lem ore gen}
Let $\rpol_\varphi$ be the ramification polygon of $\varphi$ as in Lemma \ref{lem vphi}.
Then for each point $(p^{s_i},J_i)$ where $J_i = a_in+b_i$ with $0\leq b_i\leq n-1$,
\[\min \left\lbrace v_\pi\binom{b_i}{p^{s_i}} n, v_\pi \binom{n}{p^{s_i}} n\right\rbrace \leq J_i \leq v_\pi \binom{n}{p^{s_i}}n.\]
\end{lemma}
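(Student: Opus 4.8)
The plan is to mimic the structure of Ore's original conditions (Proposition \ref{prop.ore}), which is the case $p^{s_i}=1$ of the present statement, and to extract the inequalities directly from the valuation formula (\ref{eq val coeff rho}) together with Lemma \ref{lem vphi}. Write $J_i = a_in+b_i$ with $0\le b_i\le n-1$ and abbreviate $c = v_\pi\binom{n}{p^{s_i}}$ and $d = v_\pi\binom{b_i}{p^{s_i}}$ (with the convention that $\binom{b_i}{p^{s_i}}=0$, hence $d=\infty$, when $b_i<p^{s_i}$, in which case the left-hand minimum is just $cn$).

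First I would prove the upper bound $J_i\le cn$. By (\ref{eq val coeff rho}), $J_i = v_\alpha(\rho_{p^{s_i}}) = \min_{p^{s_i}\le k\le n}\{\, n[v_\pi(\binom{k}{p^{s_i}}\varphi_k)-1]+k\,\}$, so in particular, taking $k=n$ and using that $\varphi$ is Eisenstein so $v_\pi(\varphi_n)=0$, we get $J_i \le n[v_\pi\binom{n}{p^{s_i}}-1]+n = cn$. That is immediate.

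Next, the lower bound. If $b_i=0$, i.e. $n\mid J_i$, then the claim reduces to $J_i\le cn$, already done, so assume $b_i\ne 0$. By Lemma \ref{lem vphi} we have $v_\pi(\varphi_{b_i}) = a_i+1-d$, and since $v_\pi(\varphi_{b_i})\ge 1$ (as $\varphi$ is Eisenstein and $0<b_i<n$, so $\varphi_{b_i}$ is a non-leading, non-constant coefficient lying in $(\pi)$), we obtain $a_i+1-d\ge 1$, i.e. $a_i\ge d$, whence $J_i = a_in+b_i \ge dn + b_i > dn = v_\pi\binom{b_i}{p^{s_i}}n$. Combining with the upper bound gives $J_i \ge \min\{dn, cn\}$ trivially. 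Actually one should be a little more careful to recover the sharp statement: the minimum on the right is there precisely because $d$ could exceed $c$; in that case one falls back on $J_i\le cn$ from the first step, but we need $J_i\ge cn$ too — so here I would instead argue that if $d>c$ then the minimum in (\ref{eq val coeff rho}) defining $v_\alpha(\rho_{p^{s_i}})$ cannot be attained at $k=b_i$ with a value below $cn$ unless $v_\pi(\varphi_{b_i})$ is forced up; more cleanly, when $d\ge c$ the inequality $J_i\ge\min\{dn,cn\}=cn$ already follows because $J_i\ge dn+b_i\ge cn$. So in every case $J_i\ge\min\{dn,cn\}$, which is the asserted lower bound.

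I do not expect a genuine obstacle here; the only subtlety is the bookkeeping with the $\min$ on the left-hand side and the edge cases $b_i=0$ and $b_i<p^{s_i}$ (where $\binom{b_i}{p^{s_i}}$ vanishes). The key inputs are entirely in place: the upper bound is the $k=n$ term of (\ref{eq val coeff rho}), and the lower bound comes from feeding the exact value $v_\pi(\varphi_{b_t})=a_t+1-v_\pi\binom{b_t}{p^{s_t}}$ from Lemma \ref{lem vphi} into the Eisenstein constraint $v_\pi(\varphi_{b_t})\ge 1$. The whole proof is a few lines; I would present it as: establish $J_i\le v_\pi\binom{n}{p^{s_i}}n$ via $k=n$; then if $b_i\ne0$ use Lemma \ref{lem vphi} and the Eisenstein bound to get $a_i\ge v_\pi\binom{b_i}{p^{s_i}}$ and hence $J_i\ge v_\pi\binom{b_i}{p^{s_i}}n$; conclude $J_i\ge\min\{v_\pi\binom{b_i}{p^{s_i}}n,\,v_\pi\binom{n}{p^{s_i}}n\}$ in all cases.
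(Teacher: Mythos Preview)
Your approach is essentially the paper's: the upper bound comes from the $k=n$ term of (\ref{eq val coeff rho}), and for $b_i\ne 0$ the lower bound comes from Lemma~\ref{lem vphi} combined with the Eisenstein constraint $v_\pi(\varphi_{b_i})\ge 1$. That part is fine, and your detour about the case $d>c$ is unnecessary: once you have $J_i\ge dn$ you automatically have $J_i\ge\min\{dn,cn\}$, since the minimum is at most $dn$ regardless of how $d$ compares to $c$.

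There is, however, a genuine slip in the case $b_i=0$. You write that ``the claim reduces to $J_i\le cn$,'' but it does not: when $b_i=0$ your own convention gives $d=\infty$, so the asserted lower bound is $\min\{dn,cn\}=cn$, and the lemma actually forces $J_i=cn$. Your upper bound gives $J_i\le cn$, but you still owe $J_i\ge cn$. The fix (which is what the paper does) is to observe that each term in the minimum of (\ref{eq val coeff rho}) is congruent to $k\pmod n$ with $p^{s_i}\le k\le n$; hence if $J_i\equiv 0\pmod n$ the minimum can only be attained at $k=n$, which gives $J_i = n\,v_\pi\binom{n}{p^{s_i}} = cn$ exactly. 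Incidentally, the same residue observation shows that whenever $b_i\ne 0$ one must have $b_i\ge p^{s_i}$, so the ``$b_i<p^{s_i}$'' edge case you flagged never arises for nonzero $b_i$.
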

\begin{proof}
The $k=n$ term of Equation (\ref{eq val coeff rho}) is
\[
J_i\leq n \left[ v_\pi \left( \tbinom{n}{p^{s_i}} \varphi_n \right) -1 \right] +n = v_\pi \tbinom{n}{p^{s_i}}n.
\]
If $b_i\neq 0$, then by Lemma \ref{lem vphi},
$v_\pi(\varphi_{b_i})=a_i+1-v_\pi \binom{b_i}{p^{s_i}}$.
So $nv_\pi(\varphi_{b_i})+b_i=na_i+n-nv_\pi \binom{b_i}{p^{s_i}}+b_i$
and $nv_\pi(\varphi_{b_i})+b_i-n+nv_\pi \binom{b_i}{p^{s_i}} = na_i+b_i = J_i$.
As $\varphi$ is Eisenstein we have $v_\pi(\varphi_{b_i})\geq 1$, hence $nv_\pi(\varphi_{b_i})-n\geq 0$.
This combined with $b_i > 0$ gives us that
\[
J_i = nv_\pi(\varphi_{b_i})+b_i-n+nv_\pi \binom{b_i}{p^{s_i}}
 \geq b_i+nv_\pi \binom{b_i}{p^{s_i}}
 \geq nv_\pi \binom{b_i}{p^{s_i}}.
\]
If $b_i=0$, then the minimum term of Equation (\ref{eq val coeff rho}) defining $J_i$ must be such that $k|n$,
which only occurs in the $k=n$ term, so $J_i=v_\pi \binom{n}{p^{s_i}}n$,
which is less than $v_\pi \binom{0}{p^{s_i}}n = \infty$.
\end{proof}

\begin{lemma}\label{lem vphi no vert}
Let $\rpol_\varphi$ be the ramification polygon of an Eisenstein polynomial $\varphi\in\OK[x]$ with points
\[\rpol_\varphi = \rpolypoints,\]
but no point with abscissa $p^i$, where $s_t < i < s_{t+1}$ for some $1\le t\le u$. Then for $k$ such that $p^i\leq k\leq n$,
\[
v_\pi(\varphi_k) > \frac{1}{n}\left[ \frac{J_{t+1}-J_t}{p^{s_{t+1}}-p^{s_t}} (p^i-p^{s_t})+J_t-k \right]+1-v_\pi\binom{k}{p^i}
\]
\end{lemma}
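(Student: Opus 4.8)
The plan is to derive the bound directly from Equation~(\ref{eq val coeff rho}) once we know that the point $\bigl(p^i,v_\alpha(\rho_{p^i})\bigr)$ lies \emph{strictly above} the line $L$ through $(p^{s_t},J_t)$ and $(p^{s_{t+1}},J_{t+1})$; writing
\[
L(p^i)=\frac{J_{t+1}-J_t}{p^{s_{t+1}}-p^{s_t}}\,(p^i-p^{s_t})+J_t,
\]
the claim to isolate is $v_\alpha(\rho_{p^i})>L(p^i)$.

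First I would establish this geometric fact. By part~(c) of Lemma~\ref{lem scherk}, on each block of abscissae $[p^s,p^{s+1})$ the valuation $v_\alpha(\rho_j)$ is smallest at the left endpoint $j=p^s$, so every vertex of $\rpol_\varphi$ lying to the left of $p^{v_p(n)}$ has a power of $p$ as abscissa (and $\rpol_\varphi$ is horizontal from $p^{v_p(n)}$ onward). The hypothesis that $\rpol_\varphi$ records no point at an abscissa $p^j$ with $s_t<j<s_{t+1}$ therefore forces $\rpol_\varphi$ to have no vertex with abscissa strictly between $p^{s_t}$ and $p^{s_{t+1}}$, so over $[p^{s_t},p^{s_{t+1}}]$ the polygon is the single segment from $(p^{s_t},J_t)$ to $(p^{s_{t+1}},J_{t+1})$, i.e.\ the graph of $L$. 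Hence $v_\alpha(\rho_{p^i})\ge L(p^i)$; and since $p^i$ is not among the recorded abscissae, $\bigl(p^i,v_\alpha(\rho_{p^i})\bigr)$ is not a lattice point of $\rpol_\varphi$, so $v_\alpha(\rho_{p^i})\neq L(p^i)$ and therefore $v_\alpha(\rho_{p^i})>L(p^i)$.

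It then remains to unwind Equation~(\ref{eq val coeff rho}) for the coefficient $\rho_{p^i}$, exactly as in the proof of Lemma~\ref{lem vphi}: for each $k$ with $p^i\le k\le n$ the $k$-th term of the minimum gives
\[
v_\alpha(\rho_{p^i})\le n\Bigl[v_\pi\binom{k}{p^i}+v_\pi(\varphi_k)-1\Bigr]+k,
\]
and substituting $v_\alpha(\rho_{p^i})>L(p^i)$ and solving for $v_\pi(\varphi_k)$ yields the asserted inequality.

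The one delicate point is the strictness $v_\alpha(\rho_{p^i})>L(p^i)$: one must be sure that an omitted $p$-power abscissa genuinely corresponds to a coefficient of $\rho$ whose valuation exceeds the polygon value, rather than to a point of $\rpol_\varphi$ that merely happened not to be recorded. This is exactly where the convention governing the set $\ppol$ (that every lattice point of $\rpol_\varphi$ at a $p$-power abscissa $\le p^{v_p(n)}$ is recorded) is needed; note moreover that if $L(p^i)\notin\Z$ then strictness is automatic, since $v_\alpha(\rho_{p^i})\in\Z$. Once the strict inequality is available, the rest is the routine bookkeeping already carried out for Lemma~\ref{lem vphi}.
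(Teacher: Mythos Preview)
Your proposal is correct and follows essentially the same approach as the paper: observe that $(p^i,v_\alpha(\rho_{p^i}))$ lies strictly above the segment joining $(p^{s_t},J_t)$ and $(p^{s_{t+1}},J_{t+1})$, then feed this into Equation~(\ref{eq val coeff rho}) and solve for $v_\pi(\varphi_k)$. The paper's proof is terser---it simply asserts the strict inequality without the justification via Lemma~\ref{lem scherk} and the recording convention that you spell out---but the argument is the same.
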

\begin{proof}
{If there is no point on $\rpol_\varphi$ with abscissa $p^i$, then the point $(p^i,v_\alpha(\rho_{p^i}))$ must be above the segment from $(p^{s_t},J_t)$ to $(p^{s_{t+1}},J_{t+1})$.}
Thus,
$
\frac{J_{t+1}-J_t}{p^{s_{t+1}}-p^{s_t}} (p^i-p^{s_t})+J_t < v_\alpha(\rho_{p^i}),
$
and so by Equation (\ref{eq val coeff rho}), for $k$ in $p^i\leq k\leq n$,
\[
\frac{J_{t+1}-J_t}{p^{s_{t+1}}-p^{s_t}} (p^i-p^{s_t})+J_t
                    < n \left[ v_\pi \left( \binom{k}{p^i}\; \varphi_k \right) - 1\right] + k .
\]
Solving for $v_\pi(\varphi_k)$ provides the result of the lemma.
\end{proof}

We collect the results of Lemmas \ref{lem vphi} and \ref{lem vphi no vert} to define functions 
$l_{\rpol_\varphi}(i,s)$ for $1\leq s\leq s_u$ and $p^s\leq i\leq n$ that give the minimum valuation of $\varphi_i$
due to a point (or lack thereof) above $p^s$ on the ramification polygon $\rpol_\varphi$ of $\varphi$.
By taking the maximum of these over all $s$, we define $L_{\rpol_\varphi}(i)$ so that
$v_\pi(\varphi_i)\ge L_{\rpol_\varphi}(i)$ for $1\le i \le n-1$.

\begin{definition}\label{def ells}
Let $\rpol_\varphi$ be the ramification polygon of $\varphi$ with points
\[ \rpol_\varphi = \rpolypoints, \]
and where $J_i = a_in+b_i$ with $0\leq b_i\leq n-1$. For $0\leq t\leq u$, let
\[
l_{\rpol_\varphi}(i,s_t)=\left\{\begin{array}{ll}
\max\{ 2 + a_t - v_\pi \binom{i}{p^{s_t}}, 1\} & \mbox{ if } p^{s_t} \leq i < b_t, \\[3\jot]
\max\{ 1 + a_t - v_\pi \binom{i}{p^{s_t}}, 1\} & \mbox{ if } i \geq b_t. \\
\end{array}\right. 
\]
If there is no point above $p^w$ with $s_t<w<s_{t+1}$, then for $p^w\leq i\leq n-1$, let
\[
l_{\rpol_\varphi}(i,w) = \max\left\lbrace
\left\lceil \frac{1}{n}\left[ \frac{J_{t+1}-J_t}{p^{s_{t+1}}-p^{s_t}} (p^w-p^{s_t})+J_t-k \right]+1-v_\pi\binom{k}{p^w} \right\rceil
,1\right\rbrace
\]
Finally, set
\[
L_{\rpol_\varphi}(i) = 
\left\{\begin{array}{ll}
1 &\mbox{if } i=0\\
\max \{l_{\rpol_\varphi}(i,t) : p^t \leq i\} &\mbox{if }1\leq i\leq n-1\\
0 &\mbox{if } i=n
\end{array}\right..
\]
\end{definition}

\begin{lemma}\label{lem ps div js}
Let $\rpol_\varphi$ be the ramification polygon of $\varphi$ with points
\[\rpol_\varphi = \rpolypoints \]
where $J_i = a_in+b_i$ with $0\leq b_i\leq n-1$.  Then $p^{s_i}\mid J_i$ for $0\leq i \leq u$.
\end{lemma}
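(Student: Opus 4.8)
\emph{The plan.} First, $p^{s_i}$ divides $n$ for $0\le i\le u$: by Lemma~\ref{lem scherk} (see Figure~\ref{fig ram pol}) the points of $\ppol$ with positive ordinate have abscissa at most $p^{v_p(n)}$, so $s_i\le v_p(n)$. Writing $J_i=a_in+b_i$ with $0\le b_i<n$ as in Lemma~\ref{lem vphi}, we have $p^{s_i}\mid a_in$, so it is enough to prove $p^{s_i}\mid b_i$. This holds trivially for $i=0$ (since $p^{s_0}=1$) and whenever $b_i=0$ (since then $p^{s_i}\mid n\mid J_i$), so from now on assume $i\ge 1$ and $b_i\ne 0$.

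By Lemma~\ref{lem rho sim}(b), $\rho_{p^{s_i}}\sim\varphi_{b_i}\binom{b_i}{p^{s_i}}\alpha^{b_i-n}$; since $(p^{s_i},J_i)$ is a point of the ramification polygon, $J_i=v_\alpha(\rho_{p^{s_i}})$ is finite, so $\binom{b_i}{p^{s_i}}\ne 0$, i.e. $b_i\ge p^{s_i}$, and Lemma~\ref{lem vphi} gives
\[
J_i=n\,v_\pi(\varphi_{b_i})+n\,v_\pi\binom{b_i}{p^{s_i}}+b_i-n,\qquad v_\pi(\varphi_{b_i})=a_i+1-v_\pi\binom{b_i}{p^{s_i}}.
\]
The first equation shows $J_i\equiv b_i\pmod n$; it remains to sharpen this to $p^{s_i}\mid b_i$.

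Suppose for contradiction that $p^{s_i}\nmid b_i$, so $b_i>p^{s_i}$. The argument I would give rests on a binomial identity and on Ore-type valuation estimates. For the first: from $p^{s_i}\binom{b_i}{p^{s_i}}=b_i\binom{b_i-1}{p^{s_i}-1}$ and Kummer's theorem, the base-$p$ addition $(p^{s_i}-1)+(b_i-p^{s_i})$ is carry-free exactly when $p^{s_i}\mid b_i$; thus our assumption forces $v_p\binom{b_i-1}{p^{s_i}-1}\ge 1$, hence $v_p\binom{b_i}{p^{s_i}}=v_p(b_i)-s_i+v_p\binom{b_i-1}{p^{s_i}-1}$ is strictly larger than $v_p(b_i)-s_i$. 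Plugging this into the exact value $v_\pi(\varphi_{b_i})=a_i+1-v_\pi\binom{b_i}{p^{s_i}}$ pins $v_\pi(\varphi_{b_i})$ too low to be compatible with the lower bound for $v_\pi(\varphi_{b_i})$ furnished by Lemma~\ref{lem vphi} applied to the other points of $\ppol$ (using $J_0\ge J_i$ and Ore's conditions to control their parameters) — and, in the residual configurations where that comparison is inconclusive, with the bound obtained from \eqref{eq val coeff rho} and Lemma~\ref{lem vphi no vert} on $v_\alpha(\rho_j)$ for $p^{s_i}<j$ up to the next vertex, which forces the chord of the polygon from $(1,J_0)$ to $(p^{v_p(n)},0)$ to run strictly below $(p^{s_i},J_i)$, contradicting that $(p^{s_i},J_i)$ is on the lower convex hull. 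I expect the main obstacle to be exactly this case analysis: deciding, from the residue class of $J_0$ and from whether $b_i\le p^{v_p(n)}$, which of the two mechanisms — a valuation clash on the single coefficient $\varphi_{b_i}$, or a convexity violation — delivers the contradiction; each individual estimate is routine once the bookkeeping is fixed.
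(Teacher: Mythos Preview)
Your proposal has a genuine gap: the argument after the binomial identity is only a sketch, and you yourself flag the case analysis as the ``main obstacle.'' In fact the inequality $v_p\binom{b_i}{p^{s_i}}>v_p(b_i)-s_i$ that you extract is vacuous here (the right side is negative), so it does not by itself pin $v_\pi(\varphi_{b_i})$ low enough to contradict the Lemma~\ref{lem vphi} bounds coming from the other points, and the convexity fallback you describe is not carried out.

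The paper's proof avoids this entirely by changing the target: instead of squeezing $v_\pi(\varphi_{b_i})$ between bounds coming from several polygon points, it looks at a \emph{different} coefficient of $\rho$. Set $t=v_p(J_i)=v_p(b_i)<s_i$ (the equality $v_p(J_i)=v_p(b_i)$ holds because $p^{s_i}\mid n$). Since the $p^t$-digit of $b_i$ is nonzero, Kummer gives $v_p\binom{b_i}{p^t}=0$. The $k=b_i$ term of \eqref{eq val coeff rho} for $\rho_{p^t}$, together with the exact value $v_\pi(\varphi_{b_i})=a_i+1-v_\pi\binom{b_i}{p^{s_i}}$ from Lemma~\ref{lem vphi}, then yields
\[
v_\alpha(\rho_{p^t})\le n\Bigl[v_\pi\tbinom{b_i}{p^t}+v_\pi(\varphi_{b_i})-1\Bigr]+b_i
= n\Bigl[a_i-v_\pi\tbinom{b_i}{p^{s_i}}\Bigr]+b_i\le a_in+b_i=J_i.
\]
But $p^t<p^{s_i}$ and the polygon is strictly decreasing there, so the point $(p^t,v_\alpha(\rho_{p^t}))$ must lie strictly above height $J_i$; contradiction. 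This is a one-line estimate with no case analysis --- the trick is to move from $\rho_{p^{s_i}}$ to $\rho_{p^t}$ while reusing the same coefficient $\varphi_{b_i}$.
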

\begin{proof}
As $J_0$ is an integer, $p^0=1$ divides $J_0$, and as $J_u=0$, clearly $p^{s_u} | J_u$.

Suppose that for some $1\leq i < u$ we have $v_p(J_i)=t < s_i$.  
If $\rpol$ is the ramification polygon of $\varphi$ with ramification polynomial $\rho$ and contains $(p^{s_i},J_i)$, then $t < s_i$ must imply that $J_i < v_\alpha(\rho_{p^t})$, which is bounded above by the $k=b_i$ term of Equation (\ref{eq val coeff rho}).
By Lemma \ref{lem vphi}, we have that $v_\pi(\varphi_{b_i})=a_i+1-v_\pi \binom{b_i}{p^{s_i}}$.
If we substitute this value of $v_\pi(\varphi_{b_i})$ into Equation (\ref{eq val coeff rho}), then
\[
v_\alpha(\rho_{p^t})
  \leq    n \left[ v_\pi \binom{{b_i}}{p^t} + v_\pi(\varphi_{b_i}) - 1 \right] + {b_i}\\
  =    n \left[ v_\pi \binom{{b_i}}{p^t} + a_i - v_\pi \binom{{b_i}}{p^{s_i}} \right] + {b_i}
\]
As $p^t||b_i$, the $p^t$-term of the base $p$ expansion of $b_i$ is non-zero, so
$v_p \binom{{b_i}}{p^t}=0$ and consequently $v_\pi \binom{{b_i}}{p^t}=0$.
Thus,
$ v_\alpha(\rho_{p^t}) \leq n \left[ a_i - v_\pi \binom{{b_i}}{p^i} \right] + {b_i} \leq a_in+b_i = J_i. $
This implies that $\rpol$ cannot have the point $(p^{s_i},J_i)$, and by contradiction, our claim is shown.
\end{proof}

So far we have described many necessary conditions for ramification polygons.
We now propose a necessary and sufficient description of a ramification polygon of an extension.

\begin{proposition}\label{prop ram pol iff}
Let
\[\ppol = \rpolypoints,\]
be a convex polygon with points
where $J_i = a_in+b_i$ with $0\leq b_i\leq n-1$.
There is an extension $\KL/\KK$ with ramification polygon $\ppol$, if and only if
\begin{enumerate}
\item \label{prop ram pol iff ore}
For each $J_i$, $\min \left\lbrace v_\pi\binom{b_i}{p^{s_i}} n, v_\pi \binom{n}{p^{s_i}} n\right\rbrace \leq J_i \leq v_\pi \binom{n}{p^{s_i}}n.$
\item \label{prop ram pol iff sameb}
If $b_i = b_k$, then $a_i = a_k - v_\pi \binom{b}{p^{s_k}} + v_\pi \binom{b}{p^{s_i}}$ where $b_i=b_k$.
\item \label{prop ram pol iff val}
For each point $(p^{s_i},a_in+b_i)$, we have that
\[ a_i \geq \left\lbrace \begin{array}{rl}
  1 + a_t - v_\pi \binom{b_i}{p^{s_t}} + \binom{b_i}{p^{s_i}} &\mbox{ if } p^{s_t} \leq b_i < b_t\\[3\jot]
      a_t - v_\pi \binom{b_i}{p^{s_t}} + \binom{b_i}{p^{s_i}} &\mbox{ if } b_i \geq b_t\\
\end{array}\right. \]
for all other points $(p^{s_t},J_t)$ with $J_t=a_tn+b_t\neq 0$.
\item \label{prop ram pol iff nopoint}
If there is no point of $\ppol$ above $p^\npl$, with $s_t<\npl <s_{t+1}$, then for each point $(p^{s_k},a_kn+b_k)$ 
of $\ppol$ with $b_k>p^\npl$,
\[
a_k > \frac{1}{n} \left[ \frac{J_{t+1}-J_t}{p^{s_{t+1}}-p^{s_t}} (p^\npl-p^{s_t})+ J_t - b_k \right] - v_\pi \binom{b_k}{p^\npl} + v_\pi \binom{b_k}{p^{s_k}}.
\]
\item \label{prop ram pol iff tamepts}
The points with abscissa greater than $p^{s_u}$ are $(i,0)$ where $v_\pi \binom{n}{i}=0$.
\end{enumerate}
\end{proposition}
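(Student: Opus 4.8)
The plan is to prove the two directions separately. For the ``only if'' direction, I would assume $\KL/\KK$ has ramification polygon $\ppol$ and is generated by an Eisenstein polynomial $\varphi\in\OK[x]$, and then simply read off each of the five conditions from the lemmas already established. Condition (\ref{prop ram pol iff ore}) is exactly Lemma \ref{lem ore gen}. Condition (\ref{prop ram pol iff sameb}): if $b_i=b_k=b\ne 0$, then Lemma \ref{lem vphi} gives $v_\pi(\varphi_b)=a_i+1-v_\pi\binom{b}{p^{s_i}}=a_k+1-v_\pi\binom{b}{p^{s_k}}$, and rearranging yields the stated relation; the case $b=0$ forces $J_i=v_\pi\binom{n}{p^{s_i}}n$ and $J_k=v_\pi\binom{n}{p^{s_k}}n$ by the $b_i=0$ analysis in Lemma \ref{lem ore gen}, which is consistent. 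Condition (\ref{prop ram pol iff val}) comes from feeding the lower bounds of Lemma \ref{lem vphi} for $v_\pi(\varphi_{b_i})$ (taken at the index $k=b_i$, which is where the minimum in Equation (\ref{eq val coeff rho}) defining $J_i$ is attained when $b_i\ne 0$) into the bound coming from the point $(p^{s_t},J_t)$; dividing through by $n$ and solving for $a_i$ produces the displayed inequality. Condition (\ref{prop ram pol iff nopoint}) is the analogous computation using Lemma \ref{lem vphi no vert} in place of Lemma \ref{lem vphi}, again evaluated at $k=b_k$. Condition (\ref{prop ram pol iff tamepts}) is Lemma \ref{lem rho sim}(a) together with Lemma \ref{lem scherk}: for $i>p^{s_u}=p^{v_p(n)}$ one has $v_\alpha(\rho_i)=0$, and $\rho_i\equiv\binom{n}{i}\pmod{(\alpha)}$, so $(i,0)$ is a point of the polygon precisely when $v_\pi\binom{n}{i}=0$.

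For the ``if'' direction, the strategy is constructive: given a convex polygon $\ppol$ satisfying (a)--(e), I would build an explicit Eisenstein polynomial $\varphi\in\OK[x]$ whose ramification polygon is $\ppol$. Using the functions $L_{\rpol_\varphi}$ from Definition \ref{def ells} as a guide, I set $v_\pi(\varphi_i)=L_{\ppol}(i)$ whenever there is a vertex forcing equality there (in particular at each $i=b_t$ with $b_t\ne 0$, take $v_\pi(\varphi_{b_t})=a_t+1-v_\pi\binom{b_t}{p^{s_t}}$, which condition (\ref{prop ram pol iff sameb}) guarantees is well defined independent of which point with that $b$-value we use), and otherwise choose $\varphi_i$ with valuation at least $L_{\ppol}(i)$ (and $\ge 1$, $\varphi_{0,1}\ne 0$, so that $\varphi$ is Eisenstein). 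One then checks, using Equation (\ref{eq val coeff rho}), that for each $p^{s_i}$ the minimum $\min_{p^{s_i}\le k\le n}\{n[v_\pi(\binom{k}{p^{s_i}}\varphi_k)-1]+k\}$ equals exactly $J_i$: the $k=b_i$ term contributes $J_i$ by construction, conditions (\ref{prop ram pol iff ore}) and (\ref{prop ram pol iff val}) ensure no $k$ with a vertex above it gives anything smaller, and condition (\ref{prop ram pol iff nopoint}) handles the abscissae $p^w$ with no corresponding point (forcing $v_\alpha(\rho_{p^w})$ strictly above the segment). Finally condition (\ref{prop ram pol iff tamepts}) together with Lemma \ref{lem rho sim}(a) pins down the horizontal part of the polygon, since those coefficients of $\rho$ are $\equiv\binom{n}{i}\bmod(\alpha)$ regardless of the choice of $\varphi$. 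Thus $\rpol_\varphi=\ppol$, and the extension $\KL=\KK(\alpha)$ has the required ramification polygon.

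The main obstacle is the verification in the ``if'' direction that the constructed $\varphi$ realizes $\ppol$ \emph{exactly}, with no point dropping below the prescribed polygon and the polygon being convex as specified: one must check that the minimum in Equation (\ref{eq val coeff rho}) is attained precisely at the index giving ordinate $J_i$ and nowhere lower, which is exactly what conditions (\ref{prop ram pol iff ore})--(\ref{prop ram pol iff nopoint}) are designed to enforce, but assembling these into a clean case analysis over all pairs of abscissae $(p^{s_i}, k)$ — distinguishing $k=b_i$, $k=b_t$ for another vertex, $k$ with no vertex above it, and $k$ in the horizontal range — requires care. A secondary subtlety is confirming that the non-vertex points of $\ppol$ (points of $\ppol$ not extreme points of the convex hull) are genuinely attained with the stated ordinate rather than lying strictly above the hull; since we allow $\rpol_\varphi=\ppol$ to record such points, this means the corresponding terms of Equation (\ref{eq val coeff rho}) must achieve equality, which is again controlled by choosing $v_\pi(\varphi_{b_i})$ at its lower bound.
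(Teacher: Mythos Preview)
Your proposal is correct and follows the same two-direction strategy as the paper: necessity is read off from Lemmas \ref{lem ore gen}, \ref{lem vphi}, \ref{lem vphi no vert}, and \ref{lem rho sim} exactly as you describe, and sufficiency is by explicit construction of an Eisenstein polynomial. The one place the paper is sharper is in the construction: rather than choosing $v_\pi(\varphi_i)\ge L_\ppol(i)$ at indices $i$ not of the form $b_t$, the paper sets $\psi_i=0$ outright there, so that the minimum in Equation (\ref{eq val coeff rho}) collapses to a minimum over $k\in\{b_t:b_t\ne 0\}\cup\{n\}$ only; the verification that $v_\alpha(\rho_{p^{s_i}})=J_i$ and that no point appears over $p^w$ then reduces directly to conditions (a)--(d), bypassing the case ``$k$ with no vertex above it'' that you flag as the main obstacle. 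Your more general construction would also work, but it effectively merges the proof of Proposition \ref{prop R iff} into this one.
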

\begin{proof}
Suppose $\ppol$ is the ramification polygon for $\KL/\KK$ with generating Eisenstein polynomial $\varphi$.
Assumption \ref{prop ram pol iff ore} follows from Corollary \ref{lem ore gen}.
If $b_i = b_k$, then by Lemma \ref{lem vphi}
\[
v_\pi (\varphi_{b_i}) = a_i + 1 - v_\pi \tbinom{b_i}{p^{s_i}} = a_k + 1 - v_\pi \tbinom{b_i}{p^{s_k}}.
\]
Thus  $a_i = a_k - v_\pi \binom{b_i}{p^{s_k}} + v_\pi \binom{b_i}{p^{s_i}}$,
giving us assumption \ref{prop ram pol iff sameb}.
Let $(p^{s_i},a_in+b_i)$ be a point of $\ppol$, then
by Lemma \ref{lem vphi}, we have that for all other points $(p^{s_t},J_t)$,
\[v_\pi(\varphi_{b_i}) =  a_i + 1 - v_\pi \binom{b_i}{p^{s_i}} \geq \left\{\begin{array}{ll}
  2+a_t-v_\pi \binom{b_i}{p^{s_t}} \mbox{ for } p^{s_t} \leq b_i < b_t\\[3\jot]
  1+a_t-v_\pi \binom{b_i}{p^{s_t}} \mbox{ for } b_i \geq b_t\\
  \end{array}\right.,\]
from which we see assumption \ref{prop ram pol iff val}.
If there no point of $\ppol$ above $p^\npl$, with $s_t < \npl < s_{t+1}$, 
then by Lemma \ref{lem vphi no vert}, for each point $(p^{s_i},a_in+b_i)$ of $\ppol$ with $b_i>p^\npl$,
\[
v_\pi(\varphi_{b_i}) =  a_i + 1 - v_\pi \binom{b_i}{p^{s_i}} \geq
\frac{1}{n}\left[ \frac{J_{t+1}-J_t}{p^{s_{t+1}}-p^{s_t}} (p^\npl-p^{s_t})+J_t-b_i \right]+1-v_\pi\binom{b_i}{p^\npl},
\]
from which we have assumption \ref{prop ram pol iff nopoint}.
Assumption \ref{prop ram pol iff tamepts} is given by Lemma \ref{lem rho sim}.
Thus, if $\ppol$ is a ramification polygon of an extension $\KL/\KK$, then these properties are necessary.

Next we will show sufficiency by constructing a polynomial $\psi(x)=\sum \psi_ix^i\in\OK[x]$ such that $\rpol_\psi=\ppol$.
First, we let $\psi_n=1$ and $\psi_0$ be an element of valuation 1 in $\OK$.
For each point $(p^{s_i},a_in+b_i)$ in $\ppol$, with $b_i\neq 0$, let $\psi_{b_i}$ be an element of $\OK$ with valuation $1+a_i-v_\pi\binom{b_i}{p^{s_i}}$.
By assumption \ref{prop ram pol iff sameb}, $\psi_{b_i}$ is well defined even if it is given by multiple points as those definitions coincide,
and by assumption \ref{prop ram pol iff ore} we have that $v_\pi(\psi_{b_i})\geq 1$.
If $\psi_j$ in $0<j<n$ is not assigned by some $b_i$, we set $\psi_j=0$.
We now have an  Eisenstein polynomial $\psi$, and we proceed by computing $\rpol_\psi$.

Let $\rpol_\psi$ be the ramification polygon of $\psi$, the Newton polygon $\mathcal{N}$ of the ramification polynomial $\rho(x)=\psi(\alpha x + \alpha)/(\alpha^n)\in  K(\alpha)[x]$, where $\alpha$ is a root of $\psi$.
Let $\rho(x)=\sum \rho_ix^i$.
Let $B$ be the set of nonzero $b_i$ in the points of $\ppol$.
For all $0<i<n$ with $i\notin B$, $v_\pi(\psi_i)=\infty$, so we can simplify Equation (\ref{eq val coeff rho}) by only needing to consider terms $k\in B\cup\{n\}$ to
\[v_\alpha(\rho_i) = \min \left\lbrace
  \min_{k\in B, k\geq i}\left\lbrace n \left[ v_\pi \left( \binom{k}{i}\; \psi_k \right) - 1\right] + k \right\rbrace,
  n  v_\pi \binom{k}{i}
  \right\rbrace.\]
Substitution of our values for $v_\pi(\psi_{b_t})$ gives
\[v_\alpha(\rho_i) = \min \left\lbrace
  \min_{\{
  (p^{s_k},J_k)\in \ppol : b_k \geq i
  \} }\left\lbrace n \left[ a_k - v_\pi\binom{b_k}{p^{s_k}} + v_\pi \binom{b_k}{i} \right] + b_k \right\rbrace,
  n  v_\pi \binom{n}{i}
  \right\rbrace.\]

Consider $(p^{s_i},a_in+b_i)\in\ppol$, and let us find $v_\alpha(\rho_{p^{s_i}})$.
\begin{equation}\label{eq rho bk}
v_\alpha(\rho_{p^{s_i}}) = \min \left\lbrace
  \min_{\{
  (p^{s_k},J_k)\in \ppol : b_k \geq p^{s_i}\}
  }\left\lbrace n \left[ a_k - v_\pi\binom{b_k}{p^{s_k}} + v_\pi \binom{b_k}{p^{s_i}} \right] + b_k \right\rbrace,
  n  v_\pi \binom{n}{p^{s_i}}
  \right\rbrace.\end{equation}
If $b_i\neq 0$, then the $b_k=b_i$ term in the minimum is $a_in+b_i$.  For $(p^{s_k},a_kn+b_k)\in\ppol$ with $p^{s_i} \leq b_k < b_i$, by assumption \ref{prop ram pol iff val}, we have $a_k \geq 1 + a_i - v_\pi \binom{b_k}{p^{s_i}} + \binom{b_k}{p^{s_k}}$. Thus, for all of the terms of (\ref{eq rho bk}) with $p^{s_i} \leq b_k < b_i$,
\[ n \left[ a_k - v_\pi\binom{b_k}{p^{s_k}} + v_\pi \binom{b_k}{p^{s_i}} \right] + b_k \geq 
n \left[ 1 + a_i \right] + b_k \geq a_in+b_i \]
For points $(p^{s_k},a_kn+b_k)$ on $\ppol$ with $b_k \geq b_i$, by assumption \ref{prop ram pol iff val}, we have $a_k \geq a_i - v_\pi \binom{b_k}{p^{s_i}} + \binom{b_k}{p^{s_k}}$. Thus, for all of the terms of Equation (\ref{eq rho bk}) with $b_k \geq b_i$,
\[ n \left[ a_k - v_\pi\binom{b_k}{p^{s_k}} + v_\pi \binom{b_k}{p^{s_i}} \right] + b_k \geq 
a_in  + b_k \geq a_in+b_i \]
Thus $v_\alpha(\rho_{p^{s_i}}) = \min \left\lbrace a_in+b_i,  n  v_\pi \binom{n}{p^{s_i}}\right\rbrace$, which is $a_in+b_i$ by assumption \ref{prop ram pol iff ore}.
On the other hand, if $b_i=0$, then $a_i = v_\pi \binom{n}{p^{s_i}}$, and for all of the terms of the inside minimum of 
Equation (\ref{eq rho bk}), as $a_k \geq a_i - v_\pi \binom{b_k}{p^{s_i}} + \binom{b_k}{p^{s_k}}$, we have
\[ n \left[ a_k - v_\pi\binom{b_k}{p^{s_k}} + v_\pi \binom{b_k}{p^{s_i}} \right] + b_k \geq 
a_in  + b_k \geq a_in = nv_\pi \binom{n}{p^{s_i}} \]
So, $v_\alpha(\rho_{p^{s_i}}) = a_in$, and all of the points of $\ppol$ are points of $\rpol_\psi$.

Suppose there is no point on $\ppol$ with abscissa $p^\npl$ for some $\npl$ with $s_t < \npl < s_{t+1}$. 
We take our assumption
\[
a_k > \frac{1}{n} \left[ \frac{J_{t+1}-J_t}{p^{s_{t+1}}-p^{s_t}} (p^\npl-p^{s_t})+ J_t - b_k \right] - v_\pi \binom{b_k}{p^\npl} + v_\pi \binom{b_k}{p^{s_k}},
\]
and substitute it into Equation (\ref{eq rho bk}).
After simplifying we  get
\[
v_\alpha(\rho_{p^i}) > \min \left\lbrace
  \min_{\{
  (p^{s_k},J_k)\in \ppol : b_k \geq p^{s_i}\} }
  \left\lbrace \frac{J_{t+1}-J_t}{p^{s_{t+1}}-p^{s_t}} (p^i-p^{s_t})+ J_t \right\rbrace,
  n  v_\pi \binom{n}{p^{s_i}} \right\rbrace. 
\]
As the $v_\alpha(\rho_{p^i})$ must be greater than the ordinate above $p^i$ on the line segment between $(p^{s_t},J_t)$ and $(p^{s_{t+1}},J_{t+1})$, there is no point on $\rpol_\psi$ with abscissa $p^i$.

Finally, by Lemma \ref{lem rho sim}, $\rpol_\psi$ has 
points satisfying Assumption \ref{prop ram pol iff tamepts}.  Thus $\rpol_\psi=\ppol$.
\end{proof}

\begin{proposition}\label{prop R iff}
An Eisenstein polynomial $\varphi$ has ramification polygon $\rpol$ with points
\[\rpol = \rpolypoints,\]
where $J_i = a_in+b_i$ with $0\leq b_i\leq n-1$,
if and only if
\begin{enumerate}
\item $v_\pi(\varphi_i) \geq L_\rpol(i)$
\item For $0\leq t\leq u$, $v_\pi(\varphi_{b_t})=L_\rpol(b_t)$ if $b_t\neq 0$.
\end{enumerate}
where $L_\rpol$ is as defined in Definition \ref{def ells}.
\end{proposition}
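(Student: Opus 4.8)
The plan is to show that the two conditions in Proposition \ref{prop R iff} characterize exactly when an Eisenstein polynomial $\varphi$ has the prescribed ramification polygon $\rpol$. The overall strategy is to reduce everything to Equation (\ref{eq val coeff rho}), which expresses $v_\alpha(\rho_i)$ in terms of the $v_\pi(\varphi_k)$, and then to observe that the conditions ``$v_\pi(\varphi_i)\ge L_\rpol(i)$ for all $i$'' together with ``$v_\pi(\varphi_{b_t})=L_\rpol(b_t)$ for each point'' translate precisely into the statement that each prescribed point $(p^{s_i},J_i)$ lies on $\rpol_\varphi$ and that there are no points at abscissas where $\ppol$ has none.

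\emph{Necessity.} Suppose $\varphi$ has ramification polygon $\rpol = \rpolypoints$. Then condition (a), $v_\pi(\varphi_i)\ge L_\rpol(i)$, is essentially a repackaging of Lemmas \ref{lem vphi} and \ref{lem vphi no vert}: by Definition \ref{def ells}, $L_\rpol(i)$ is the maximum over all $s$ with $p^s\le i$ of the lower bounds $l_\rpol(i,s)$ supplied by those two lemmas (one bound coming from each actual point $(p^{s_t},J_t)$, and one from each ``missing'' abscissa $p^w$ between consecutive points), together with the trivial Eisenstein bound $v_\pi(\varphi_i)\ge 1$. For $i=0$ and $i=n$ the values $L_\rpol(0)=1$ and $L_\rpol(n)=0$ are forced by $\varphi$ being Eisenstein. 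For condition (b): if $b_t\ne 0$ then Lemma \ref{lem vphi} gives the exact equality $v_\pi(\varphi_{b_t})=a_t+1-v_\pi\binom{b_t}{p^{s_t}}$; one then checks this value equals $L_\rpol(b_t)$, i.e. that the maximum defining $L_\rpol(b_t)$ is attained at $s=s_t$ and gives exactly $a_t+1-v_\pi\binom{b_t}{p^{s_t}}$ — this is exactly what Proposition \ref{prop ram pol iff}\eqref{prop ram pol iff val} (consistency among points) and \eqref{prop ram pol iff nopoint} (consistency with missing abscissas) guarantee, so no other $s$ can force a strictly larger value.

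\emph{Sufficiency.} Conversely, assume (a) and (b). The key point is that (a) and (b) together imply that $\rpol$ satisfies conditions \eqref{prop ram pol iff ore}--\eqref{prop ram pol iff tamepts} of Proposition \ref{prop ram pol iff}: indeed, the finiteness/positivity inequalities on $v_\pi(\varphi_{b_t})=L_\rpol(b_t)$ translate into Ore-type bounds on $J_t$ \eqref{prop ram pol iff ore}, the requirement that the maxima defining various $L_\rpol(b_t)$ be attained at the ``right'' $s$ translates into the cross-point inequalities \eqref{prop ram pol iff val}, \eqref{prop ram pol iff nopoint}, and $\ppol$ being given as a convex polygon with the tame points forced at the bottom gives \eqref{prop ram pol iff sameb} and \eqref{prop ram pol iff tamepts}. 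Having verified the hypotheses of Proposition \ref{prop ram pol iff}, there is \emph{some} extension with ramification polygon $\ppol=\rpol$; but more directly, one re-runs the computation in the proof of Proposition \ref{prop ram pol iff}: plugging the valuations $v_\pi(\varphi_{b_t})=L_\rpol(b_t)$ (and $v_\pi(\varphi_i)\ge L_\rpol(i)$ for the rest) into Equation (\ref{eq val coeff rho}) shows $v_\alpha(\rho_{p^{s_i}})=J_i$ for each prescribed point and that no new points appear, hence $\rpol_\varphi=\rpol$. Here condition (b) is essential: (a) alone would only give that $\rpol_\varphi$ lies on or above $\rpol$; the equalities pin the points down.

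\emph{Main obstacle.} I expect the delicate step to be verifying, in the necessity direction, that $L_\rpol(b_t)$ equals the specific value $a_t+1-v_\pi\binom{b_t}{p^{s_t}}$ rather than something larger — i.e. that among all the lower bounds $l_\rpol(b_t,s)$ contributing to the max, the one from the point $(p^{s_t},J_t)$ itself is the largest. This is precisely the content of the consistency conditions \eqref{prop ram pol iff val} and \eqref{prop ram pol iff nopoint} of Proposition \ref{prop ram pol iff}, so the argument will consist of unwinding the definition of $L_\rpol$ and invoking those inequalities term by term; the bookkeeping with the binomial valuations $v_\pi\binom{b_t}{p^{s_t}}$ and the floor/ceiling in Definition \ref{def ells} is the part most in need of care.
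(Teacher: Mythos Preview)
Your plan is correct and follows essentially the same approach as the paper: necessity via Lemmas \ref{lem vphi} and \ref{lem vphi no vert}, sufficiency by direct substitution into Equation (\ref{eq val coeff rho}) and invoking Proposition \ref{prop ram pol iff}(a). Two small remarks. First, your ``main obstacle''---that $L_\rpol(b_t)$ equals $a_t+1-v_\pi\binom{b_t}{p^{s_t}}$---is easier than you suggest: in the necessity direction you already know $v_\pi(\varphi_{b_t})=a_t+1-v_\pi\binom{b_t}{p^{s_t}}=l_\rpol(b_t,s_t)$ by Lemma \ref{lem vphi}, and $l_\rpol(b_t,s_t)\le L_\rpol(b_t)\le v_\pi(\varphi_{b_t})$ by definition of $L_\rpol$ as a maximum and by Lemmas \ref{lem vphi}, \ref{lem vphi no vert}; a sandwich gives equality without unpacking Proposition \ref{prop ram pol iff}\eqref{prop ram pol iff val}--\eqref{prop ram pol iff nopoint}. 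Second, in the sufficiency direction your detour through ``(a) and (b) imply $\rpol$ satisfies Proposition \ref{prop ram pol iff}'' is unnecessary: in context $\rpol$ is assumed to be a ramification polygon (Definition \ref{def hhf}), so those conditions are available as hypotheses and the paper uses them directly (specifically part (a)) in the computation.
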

\begin{proof}
If $\varphi$ has ramification polygon $\rpol$, then this is the result
of Lemmas \ref{lem vphi} and \ref{lem vphi no vert}.

Suppose $\varphi$ satisfies these assumptions and $\rho$ is the ramification polynomial of $\varphi$.
If $(p^{s_t},J_t=a_tn+b_t)$ is a point of $\rpol$, then substitution of $l_\rpol(k,s_t)$
for $v_\pi(\varphi_k)$ into Equation (\ref{eq val coeff rho}) gives us
\[
v_\alpha(\rho_{p^{s_t}}) = \min \left\{
\min_{p^{s_t}\leq k < b_t} \{na_t+n+k\},
\min_{b_t \leq k < n} \{na_t+k\},
nv_\pi \binom{n}{p^{s_t}}
\right\} 
\]
If $b_t = 0$, then this reduces to
\[ v_\alpha(\rho_{p^{s_t}}) = \min \left\{
na_t+n+p^{s_t},
nv_\pi \binom{n}{p^{s_t}}
\right\} = nv_\pi \binom{n}{p^{s_t}} = J_t.
\]
as $na_t+n+p^{s_t} \geq J_t = nv_\pi \binom{n}{p^{s_t}}$, by Proposition \ref{prop ram pol iff} (a).
If $b_t \neq 0$, then this reduces to
\[ v_\alpha(\rho_{p^{s_t}}) = \min \left\{
na_t+b_t,
nv_\pi \binom{n}{p^{s_t}}
\right\}
= na_t+b_t = J_t
\]
as $J_t \leq nv_\pi \binom{n}{p^{s_t}}$, by Proposition \ref{prop ram pol iff} (a).
So $\rpol_\varphi$ contains the points of $\rpol$.

If there is no point on $\rpol$ with abscissa $p^i$, with $s_t < i < s_{t+1}$, then for $k$ in $p^i\leq k\leq n$,
\[
v_\pi(\varphi_k) \geq l_\rpol(k,i)
> \frac{1}{n}\left[ \frac{J_{t+1}-J_t}{p^{s_{t+1}}-p^{s_t}} (p^i-p^{s_t})+J_t-k \right]+1-v_\pi\binom{k}{p^i}.
\]
Some algebraic manipulation of this inequality gives us
\[
\frac{J_{t+1}-J_t}{p^{s_{t+1}}-p^{s_t}} (p^i-p^{s_t})+J_t
                    < n \left[ v_\pi \left( \binom{k}{p^i}\; \varphi_k \right) - 1\right] + k ,
\]
which shows that $v_\alpha(\rho_{p^i}) = \min_{p^i\leq k\leq n} \left\{ n \left[ v_\pi \left( \binom{k}{p^i}\; \varphi_k \right) - 1\right] + k\right\}$ is greater than the value above $p^i$ on the segment from $(p^{s_t},J_t)$ to $(p^{s_{t+1}},J_{t+1})$.
So there is no point on $\rpol_\varphi$ above $p^i$, and thus $\rpol_\varphi = \rpol$. 
\end{proof}

\begin{definition}\label{def hhf}
We call a polygon $\rpol$ with points
\[\rpol = \rpolypoints,\]
that fulfills the conditions of Proposition \ref{prop ram pol iff}
a \emph{ramification polygon}.
We call the function $\hhf_\rpol: \Rplus \to \Rplus$, 
$\lambda\mapsto \min_{0\leq i\leq u}\{\frac{1}{n}(J_i + \lambda p^{s_i}) \}$
the \emph{Hasse-Herbrand function} of $\rpol$.
\end{definition}

\begin{remark}
The function $\hhf_\rpol$ in Definition \ref{def hhf} agrees with the connections between the 
ramification polygon and the Hasse-Herbrand transition function as observed 
in \cite{lubin,li}.  Note that these works 
define the ramification polygon as the Newton polygon of $\varphi(x+\alpha)$. 
For normal extensions $\KL/\KK$, our function $\hhf_\rpol$ agrees with the 
classical $\hhf_{\KL/\KK}$ defined in \cite{serre,fv}.
For non-Galois extensions, our function agrees with the transition function
for ramification sets defined by Helou in \cite{helou}.
\end{remark}

\begin{example}[Example \ref{ex J10} continued]\label{ex R2}
There are three possible ramification polygons for extensions $\KL$ of 
$\Q_3$ of degree $9$ with $v_3(\disc(\KL))=18$.
These polygons are 
$\rpol_1=\{(1,10),(9,0)\}$,
$\rpol_2=\{(1,10),(3,3),(9,0)\}$, and
$\rpol_3=\{(1,10),(3,6),(9,0)\}$
and are illustrated in Figure \ref{fig ram pol ex}.

\begin{figure}
\begin{center}
\begin{tikzpicture}[scale=0.4]
\draw[thick,->] (0,0) -- (10,0) node[anchor=west] {$i$};
\draw[thick,->] (0,0) -- (0,11) node[anchor=south] {$v_\alpha(\rho_i)$};
\draw[thick] (1,10) -- (9,0);
\filldraw[black] (1,10) circle (3pt)
                 (9,0)  circle (3pt);
\draw (1,10)  node[anchor=west] {$(1,10)$}
      (8.2,1) node[anchor=west] {$(9,0)$};
\draw (4,6)   node[anchor=west] {$-\frac{5}{4}$};
\draw (-1,-1.5)   node[anchor=west] {$\rpol_1=\{(1,10),(9,0)\}$};

\draw[thick,->] (13,0) -- (23,0) node[anchor=west] {$i$};
\draw[thick,->] (13,0) -- (13,11) node[anchor=south] {$v_\alpha(\rho_i)$};
\draw[thick] (14,10) -- (16,3) -- (22,0);
\filldraw[black] (14,10) circle (3pt)
                 (16,3)  circle (3pt)
                 (22,0)  circle (3pt);
\draw (14,10)  node[anchor=west] {$(1,10)$}
      (17,3.5) node[anchor=west] {$(3,3)$}
      (22,1)   node[anchor=center] {$(9,0)$};
\draw (15,7)     node[anchor=west] {$-\frac{7}{2}$}
      (18.5,2.3) node[anchor=west] {$-\frac{1}{2}$};
\draw (12,-1.5)   node[anchor=west] {$\rpol_2=\{(1,10),(3,3),(9,0)\}$};

\draw[thick,->] (26,0) -- (36,0) node[anchor=west] {$i$};
\draw[thick,->] (26,0) -- (26,11) node[anchor=south] {$v_\alpha(\rho_i)$};
\draw[thick] (27,10) -- (29,6) -- (35,0);
\filldraw[black] (27,10) circle (3pt)
                 (29,6)  circle (3pt)
                 (35,0)  circle (3pt);
\draw (27,10)  node[anchor=west] {$(1,10)$}
      (29,6.5) node[anchor=west] {$(3,6)$}
      (34.2,1) node[anchor=west] {$(9,0)$};
\draw (28.2,8.1) node[anchor=west] {$-2$}
      (31.5,4.1) node[anchor=west] {$-1$};
\draw (25,-1.5)   node[anchor=west] {$\rpol_3=\{(1,10),(3,6),(9,0)\}$};
\end{tikzpicture}
\end{center}
\caption[\texttt{Possible ramification polygons of extensions 
$\mathtt{\KL}$ of $\mathtt{\Q_3}$ of degree 9 with $\mathtt{v_3(\disc(\KL))=18}$.}]
{Possible ramification polygons of extensions 
$\KL$ of $\Q_3$ of degree $9$ with $v_3(\disc(\KL))=18$.}
\label{fig ram pol ex}
\end{figure}
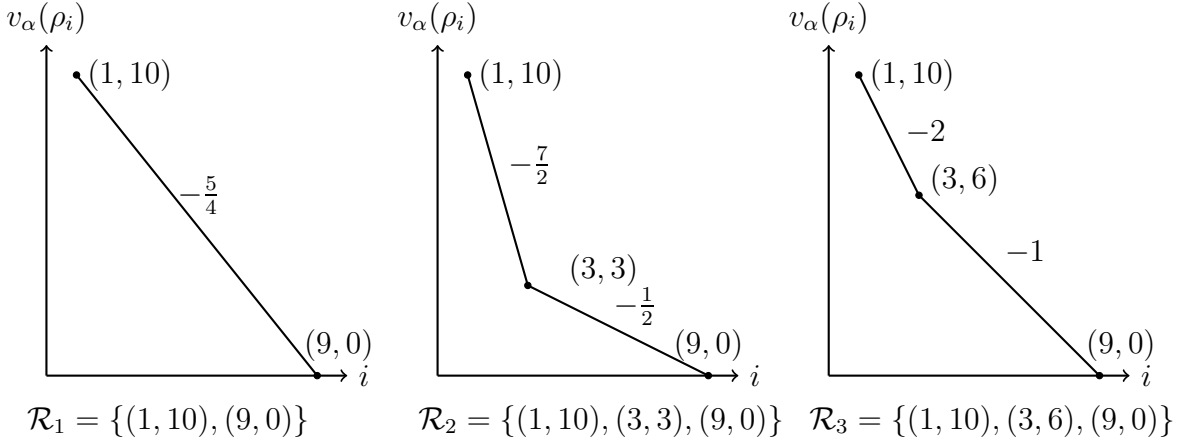

Since by Lemma \ref{lem vphi} we have \nrt{$v(\varphi_3)=1$}, 
the polynomials $\varphi$ generating extensions with ramification polygon $\rpol_2$ are given by:
\begin{center}
\small
\renewcommand\arraystretch{1.1}
\renewcommand\tabcolsep{2pt}
\begin{tabular}{l|cccccccccc}
             &$x^9$    &$x^8$      &$x^7$      &$x^6$      &$x^5$      &$x^4$      &$x^3$      &$x^2$      &$x^1$        &$x^0$ \\\hline
      $3^4$&$\{0\}$    &\ote{\{0\}}&\ote{\{0\}}&\ote{\{0\}}&\ote{\{0\}}&\ote{\{0\}}&\ote{\{0\}}&\ote{\{0\}}&\ote{\{0\}}  &\ote{\{0\}}  \\
      $3^3$&$\{0\}$    &$\{0,1,2\}$&$\{0,1,2\}$&$\{0,1,2\}$&$\{0,1,2\}$&$\{0,1,2\}$&$\{0,1,2\}$&$\{0,1,2\}$&$\{0,1,2\}$  &$\{0,1,2\}$  \\
      $3^2$&$\{0\}$    &$\{0,1,2\}$&$\{0,1,2\}$&$\{0,1,2\}$&$\{0,1,2\}$&$\{0,1,2\}$&$\{0,1,2\}$&$\{0,1,2\}$&\ote{\{1,2\}}&$\{0,1,2\}$  \\
      $3^1$&$\{0\}$    &\ote{\{0\}}&\ote{\{0\}}&$\{0,1,2\}$&\ote{\{0\}}&\ote{\{0\}}&\nre{\{1,2\}}&\ote{\{0\}}&\ote{\{0\}}  &$\{1,2\}$ \\
      $3^0$&$\{1\}$    &$\{0\}$    &$\{0\}$    &$\{0\}$    &$\{0\}$    &$\{0\}$    &$\{0\}$    &$\{0\}$    &$\{0\}$      &$\{0\}$
\end{tabular}
\end{center}
\end{example}


\section{Residual Polynomials of Segments}\label{sec res seg}

Residual (or associated) polynomials were introduced by Ore \cite{ore-newton}. 
They yield information about the unramified part of the extension generated by the factors of a polynomial. This makes them a useful tool in the computation of ideal decompositions and integral bases \cite{guardia-montes-nart,montes,montes-nart} and the closely related problem of polynomial factorization over local fields \cite{guardia-nart-pauli, pauli-pf2}.

\begin{definition}[Residual polynomial]\label{def res poly}
Let $\KL$ be a finite extension of $\KK$ with uniformizer $\alpha$.  
Let  $\rho(x)=\sum_i \rho_i x^i\in\OL[x]$.
Let $\spol$ be a segment of the Newton polygon of 
$\rho$  of length $l$ with
endpoints $(k,v_\alpha(\rho_k))$ and $(k+l,v_\alpha(\rho_{k+l}))$, and slope $-\slopenum/\slopeden=\left(v_\alpha(\rho_{k+l})-v_\alpha(\rho_k)\right)/l$
then
\[
\RA(x)=\sum_{j=0}^{l/e}\underline{\rho_{je+k}\alpha^{jh-v_\alpha(\rho_k)}}x^{j}\in\RK[x]
\]
is called the \emph{residual polynomial} of $\spol$.
\end{definition}

\begin{remark}
The ramification polygon of a polynomial $\varphi$ and the residual polynomials of its segments yield a subfield $\KM$ 
of the splitting field $\KN$ of 
$\varphi$, such  that $\KN/\KM$ is a $p$-extension \cite[Theorem 9.1]{greve-pauli}. 
\end{remark}

From the definition we obtain some of the properties of residual polynomials.

\begin{lemma}\label{lem res pol}
Let $\KL$ be a finite extension of $\KK$ with uniformizer $\alpha$.  Let $\rho\in\OL[x]$.
Let $\npol$ be the Newton polygon of $\rho$  with segments $\spol_1,\dots,\spol_\ell$ and let
$\RA_1,\dots,\RA_\ell$ be the corresponding residual polynomials.
\begin{enumerate}
\item \label{lem res pol int}
If $\spol_i$ 
has integral slope $-\slopenum\in\Z$
with endpoints $(k,v_\alpha(\rho_k))$ and $(k+l,v_\alpha(\rho_{k+l}))$ then
$
\RA_i(x)=\sum_{j=0}^{l}\underline{\rho_{j+k}\alpha^{j\slopenum-v_\alpha(\rho_k)}}\,x^{j}
=\underline{\rho(\alpha^\slopenum x)\alpha^{-k-v_\alpha(\rho_k)}\,x^{n-l}}
\in\RK[x].
$
\item \label{lem res pol end}
If for $1\le i\le\ell-1$ the leading coefficient of $\RA_i$ is denoted by $\RA_{i,\deg\RA_i}$ and
$\RA_{i+1,0}$ is the constant coefficient of $\RA_{i+1}$ then $\RA_{i,\deg\RA_i}=\RA_{i+1,0}$. 
\item If $\rho$ is monic then $\RA_\ell$ is monic.
\end{enumerate}
\end{lemma}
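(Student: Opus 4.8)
The plan is to read all three parts directly off Definition \ref{def res poly}; the only genuine content is the dictionary between the slope of a segment and the $\alpha$-valuations of the coefficients of $\rho$ at that segment's endpoints.

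For part (a), an integral slope $-\slopenum$ forces $\slopeden=1$ because the slope is taken in lowest terms, and substituting $\slopeden=1$ into the sum of Definition \ref{def res poly} is exactly the first expression in the displayed identity. For the second equality I would use the observation that the substitution $x\mapsto\alpha^{\slopenum}x$ sends each point $(i,v_\alpha(\rho_i))$ of the Newton polygon of $\rho$ to $(i,v_\alpha(\rho_i)+\slopenum i)$, so a segment of slope $-\slopenum$ becomes horizontal, at height $H=v_\alpha(\rho_k)+\slopenum k$. Then $\RA_i$ is, up to the power of $x$ coming from the left abscissa $k$, the reduction modulo $(\alpha)$ of $\alpha^{-H}\rho(\alpha^{\slopenum}x)$: a coefficient of $\rho$ that does not lie on $\spol_i$ lands, after the substitution, strictly above height $H$ by convexity of the Newton polygon, hence contributes $0$ modulo $(\alpha)$, and collecting the surviving terms gives the claimed closed form. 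This convexity check is the one step that is not pure bookkeeping.

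For part (b), let $(k+l,v_\alpha(\rho_{k+l}))$ be the shared endpoint of $\spol_i$ and $\spol_{i+1}$, where $l$ is the length of $\spol_i$. Writing the slope of $\spol_i$ as $-\slopenum/\slopeden=(v_\alpha(\rho_{k+l})-v_\alpha(\rho_k))/l$ gives $(l/\slopeden)\slopenum=v_\alpha(\rho_k)-v_\alpha(\rho_{k+l})$, so the top coefficient of $\RA_i$, which is the $j=l/\slopeden$ term, equals $\underline{\rho_{k+l}\,\alpha^{-v_\alpha(\rho_{k+l})}}$. For $\spol_{i+1}$ the point $(k+l,v_\alpha(\rho_{k+l}))$ plays the role of the left endpoint $(k,v_\alpha(\rho_k))$ in Definition \ref{def res poly}, so its $j=0$ term, the constant coefficient of $\RA_{i+1}$, is also $\underline{\rho_{k+l}\,\alpha^{-v_\alpha(\rho_{k+l})}}$, and the two coincide.

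For part (c), monicity gives $v_\alpha(\rho_n)=0$ for $n=\deg\rho$, so $(n,0)$ is the right endpoint of the last segment $\spol_\ell$; with left endpoint $(k,v_\alpha(\rho_k))$, length $l=n-k$, and slope $-\slopenum/\slopeden=-v_\alpha(\rho_k)/l$, the same computation as in part (b) shows the leading coefficient of $\RA_\ell$ is $\underline{\rho_n\,\alpha^{(l/\slopeden)\slopenum-v_\alpha(\rho_k)}}=\underline{\rho_n}=\underline 1=1$, which is in particular nonzero, so $\deg\RA_\ell=l/\slopeden$ and $\RA_\ell$ is monic. Thus the only place any care is needed is the convexity argument in part (a); parts (b) and (c) are just arithmetic with the endpoint valuations.
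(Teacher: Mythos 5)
The paper does not actually prove this lemma; it merely says ``From the definition we obtain...'' and moves on, so your direct verification is the proof. Your arguments for parts (b) and (c) are correct and exactly the intended bookkeeping: the slope relation $(l/\slopeden)\slopenum=v_\alpha(\rho_k)-v_\alpha(\rho_{k+l})$ identifies both the top coefficient of $\RA_i$ and the constant coefficient of $\RA_{i+1}$ as $\underline{\rho_{k+l}\alpha^{-v_\alpha(\rho_{k+l})}}$, and in part (c) the right endpoint $(n,0)$ makes the leading coefficient $\underline{\rho_n}=1$.

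One caveat on part (a): you correctly prove the first displayed equality by substituting $\slopeden=1$, and your convexity argument (any $i\notin[k,k+l]$ lands strictly above the supporting line of $\spol_i$, hence contributes $0$ modulo $\alpha$ after normalizing by $\alpha^{-H}$ with $H=v_\alpha(\rho_k)+\slopenum k$) is sound. But you never actually derive the second displayed expression $\underline{\rho(\alpha^\slopenum x)\alpha^{-k-v_\alpha(\rho_k)}x^{n-l}}$ as written; you only say it holds ``up to the power of $x$.'' In fact that printed form does not reduce to the first sum under your own normalization: the $\alpha$-exponent should be $-k\slopenum-v_\alpha(\rho_k)$ rather than $-k-v_\alpha(\rho_k)$, and the power of $x$ should shift the left abscissa $k$ to $0$ rather than being $n-l$. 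Your proof establishes what the formula ought to say, but if you want your write-up to match the statement as printed, you should either point out the discrepancy explicitly or state the corrected closed form and prove that.
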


From now on we consider the residual polynomials of the segments of a ramification polygon.
From the definition of the residual polynomials and Lemma \ref{lem rho sim} we obtain:

\begin{proposition}\label{prop res pol seg}
Let $\varphi\in\OK[x]$ be Eisenstein of degree $n=p^r e_0$ with $\gcd(p,e_0)=1$, let $\alpha$ be a root of $\varphi$, 
$\rho$ the ramification polynomial, and $\rpol_\varphi$ the ramification polygon of $\varphi$.
\begin{enumerate}
\item
If $e_0\ne 1$ then $\rpol_\varphi$ has a horizontal
segment of length $p^r(e_0-1)$ with residual polynomial 
$\RA=\sum_{i=0}^{n-p^r}\RA_{i}x^i$ where 
$\RA_{i}=\underline{\binom{n}{i}}\ne\underline0$ if and only if  $v_\alpha\binom{n}{i}=0$.
\item If $(p^{s_k},J_k),\dots,(p^{s_l},J_l)$ are the points on a segment $\spol$ of $\rpol_\varphi$
of slope $-\frac{\slopenum}{\slopeden}$, then the residual polynomial of $\spol$ is
\[
\RA(x)
=
\sum_{i=k}^l\underline{\rho_{p^{s_i}}\alpha^{-J_i}}\,x^{(p^{s_i}-p^{s_k})/\slopeden}
=
\sum_{i=k}^l\underline{
\varphi_{b_i} {\tbinom{b_i}{p^{s_i}}}\alpha^{-a_i n-n}
}\,x^{(p^{s_i}-p^{s_k})/\slopeden}.
\]
\end{enumerate}
\end{proposition}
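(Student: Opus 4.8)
The plan is to read off both parts directly from Definition \ref{def res poly}, feeding in the valuations and residues of the coefficients $\rho_i$ of the ramification polynomial $\rho$ supplied by Lemmas \ref{lem scherk} and \ref{lem rho sim}.

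For part (a), I would first note that by Lemma \ref{lem scherk} we have $v_\alpha(\rho_i)\ge 0$ for all $i$, with equality at $i=p^r$ and $i=n$; hence, when $e_0\neq 1$ (equivalently $p^r\neq n$), the lower convex hull defining $\rpol_\varphi$ has a horizontal segment running from $(p^r,0)$ to $(n,0)$, of length $n-p^r=p^r(e_0-1)$. Since this segment has integral slope $0$, the integral-slope part of Lemma \ref{lem res pol}(a) with left endpoint $p^r$ gives $\RA(x)=\sum_j \underline{\rho_{j+p^r}}\,x^j$; substituting $\underline{\rho_{j+p^r}}=\underline{\binom{n}{j+p^r}}$ from Lemma \ref{lem rho sim}(a) identifies the coefficient attached to abscissa $i$ as $\underline{\binom{n}{i}}$, which is nonzero exactly when $v_\alpha\binom{n}{i}=0$.

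For part (b), I would apply Definition \ref{def res poly} to the segment $\spol$, whose left endpoint is $(p^{s_k},v_\alpha(\rho_{p^{s_k}}))=(p^{s_k},J_k)$; this gives $\RA(x)=\sum_j\underline{\rho_{j\slopeden+p^{s_k}}\,\alpha^{j\slopenum-J_k}}\,x^j$. The crucial step is to verify that the only $j$ contributing a nonzero coefficient are those with $j\slopeden+p^{s_k}=p^{s_i}$ for some $i\in\{k,\dots,l\}$: if $m=j\slopeden+p^{s_k}$ is a power of $p$ not among the listed abscissas, then no point of $\ppol$ lies over it, so $(m,v_\alpha(\rho_m))$ sits strictly above $\spol$ and the term vanishes modulo $(\alpha)$; and if $m$ is not a power of $p$, writing $p^s\le m<p^{s+1}$ and combining the monotonicity $v_\alpha(\rho_m)\ge v_\alpha(\rho_{p^s})$ of Lemma \ref{lem scherk} with convexity and the negative slope of $\spol$ again places $v_\alpha(\rho_m)$ strictly above the ordinate of $\spol$ at $m$. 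This collapses the sum to the terms $i=k,\dots,l$, each carrying $x$-exponent $(p^{s_i}-p^{s_k})/\slopeden$; the collinearity of $(p^{s_k},J_k)$ and $(p^{s_i},J_i)$ on a line of slope $-\slopenum/\slopeden$ gives $\frac{\slopenum}{\slopeden}(p^{s_i}-p^{s_k})=J_k-J_i$, which rewrites the $\alpha$-exponent $j\slopenum-J_k$ as $-J_i$ and yields the first asserted equality. For the second equality I would, at each point with $b_i\neq 0$, substitute Lemma \ref{lem rho sim}(b), $\rho_{p^{s_i}}\sim\varphi_{b_i}\binom{b_i}{p^{s_i}}\alpha^{b_i-n}$: since $\gamma\sim\delta$ forces the two to have equal residue after normalizing valuation to $0$, this gives $\underline{\rho_{p^{s_i}}\alpha^{-J_i}}=\underline{\varphi_{b_i}\binom{b_i}{p^{s_i}}\alpha^{b_i-n-J_i}}$, and $b_i-n-J_i=-a_in-n$ by $J_i=a_in+b_i$. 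The lone point where $b_i=0$ can arise is the right endpoint $(p^{s_u},0)$, whose residue I would instead read off from Lemma \ref{lem rho sim}(a).

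The step I expect to be the main obstacle is the bookkeeping in part (b) showing that every lattice point of the segment other than the listed $(p^{s_i},J_i)$ reduces to $\underline 0$ — this is precisely where the monotonicity in Lemma \ref{lem scherk} and the convention that an omitted abscissa corresponds to a point strictly above the polygon are both essential. The remaining ingredients, namely the collinearity identity, the arithmetic of the $\alpha$-exponent, and the translation of $\sim$ into an equality of residues, are routine.
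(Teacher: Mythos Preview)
Your proposal is correct and follows exactly the route the paper indicates: the paper does not give a detailed proof but simply states that the proposition follows ``from the definition of the residual polynomials and Lemma \ref{lem rho sim}'', and your argument is a careful unpacking of precisely that, supplemented by Lemma \ref{lem scherk} to pin down which abscissas can contribute. Your handling of the vanishing of the non-$p^{s_i}$ terms via Lemma \ref{lem scherk}(c) together with the strict negativity of the slope, and your separate treatment of the $b_i=0$ endpoint via Lemma \ref{lem rho sim}(a), are the right moves and fill in exactly the details the paper leaves implicit.
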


We immediately get:

\begin{corollary}\label{cor res pol seg}
Let $\varphi\in\OK[x]$ be Eisenstein and 
$\rpol_\varphi$ its ramification polygon.  
\begin{enumerate}
\item The residual polynomial of the rightmost segment of $\rpol_\varphi$ is monic.
\item
Let $(p^{s_l},J_l)$ 
be the right end point of the $i$-th segment of $\rpol_\varphi$ 
and $\RA_i=\sum_{j=0}^{m_i} \RA_{i,j}$ its residual polynomial
and let $(p^{s_k},J_k)$ 
be the left end point of the $(i+1)$-st segment of $\rpol_\varphi$
and $\RA_{i+1}=\sum_{j=0}^{m_{i+1}} \RA_{i+1,j}$ its residual polynomial.
Then $\RA_{i,m_i}=\RA_{i+1,0}$.
\end{enumerate}
\end{corollary}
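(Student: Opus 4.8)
The plan is to derive both parts directly from Proposition \ref{prop res pol seg} and Lemma \ref{lem res pol}. For part (a), the rightmost segment of $\rpol_\varphi$ is the last segment $\spol_\ell$ of the Newton polygon of the ramification polynomial $\rho$. Since $\varphi$ is Eisenstein we have $\varphi_n = 1$, and by Lemma \ref{lem scherk}(2) the coefficient $\rho_n$ satisfies $v_\alpha(\rho_n) = 0$ with $\rho_n \equiv \binom{n}{n} = 1 \bmod (\alpha)$; more relevantly, $\rho$ is monic of degree $n$ (from $\varphi(\alpha x + \alpha)/\alpha^n$, the leading term is $\alpha^n x^n / \alpha^n = x^n$). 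Hence by Lemma \ref{lem res pol}(c) the residual polynomial $\RA_\ell$ of the last segment is monic, which is exactly the assertion of (a).

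For part (b), I would simply specialize Lemma \ref{lem res pol}(\ref{lem res pol end}) to the Newton polygon $\npol = \rpol_\varphi$ of the ramification polynomial. That lemma states, for consecutive segments $\spol_i$ and $\spol_{i+1}$ of any Newton polygon of a polynomial over $\OL$, that the leading coefficient of $\RA_i$ equals the constant coefficient of $\RA_{i+1}$. The only thing to check is that the indexing in the corollary matches: the right end point $(p^{s_l}, J_l)$ of the $i$-th segment is the same lattice point as the left end point of the $(i+1)$-st segment's reading, but the corollary names the left end point of the $(i+1)$-st segment $(p^{s_k}, J_k)$ — these refer to the shared vertex, so $\RA_{i,m_i}$ (the coefficient of the top-degree monomial $x^{m_i}$, corresponding to the right end of $\spol_i$) and $\RA_{i+1,0}$ (the coefficient of $x^0$, corresponding to the left end of $\spol_{i+1}$) are precisely the two coefficients identified in Lemma \ref{lem res pol}(\ref{lem res pol end}). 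So $\RA_{i,m_i} = \RA_{i+1,0}$ follows verbatim.

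I do not anticipate a serious obstacle here; the corollary is explicitly flagged as something we "immediately get." The only minor care needed is bookkeeping: confirming that in Proposition \ref{prop res pol seg}(b) the exponent $(p^{s_i} - p^{s_k})/\slopeden$ ranges over $0, 1, \dots, m_i$ as $i$ runs from $k$ to $l$, so that "leading coefficient" really is $\RA_{i, m_i}$ with $m_i = (p^{s_l} - p^{s_k})/\slopeden$, and likewise that the constant term of the next residual polynomial is indexed at $j = 0$. Both are immediate from the definition of the residual polynomial (Definition \ref{def res poly}), where the monomial $x^j$ attached to the point $(je + k, \cdot)$ places the left endpoint at $j = 0$ and the right endpoint at $j = l/e$. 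With that noted, the proof is two lines invoking Lemma \ref{lem res pol}(c) and Lemma \ref{lem res pol}(\ref{lem res pol end}).
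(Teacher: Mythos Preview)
Your proposal is correct and matches the paper's approach: the corollary is stated in the paper with the phrase ``We immediately get'' and no explicit proof, precisely because both parts are direct specializations of Lemma~\ref{lem res pol}(c) and Lemma~\ref{lem res pol}(\ref{lem res pol end}) to the monic ramification polynomial $\rho$.
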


We now give criteria for the existence of polynomials with given ramification polygon $\rpol$ and given residual polynomials.

\begin{proposition}\label{prop A}
Let $n=p^r e_0$ with $\gcd(p,e_0)=1$ and let
$\rpol$ be a polygon with points
\[
\rpol = \{ (1,J_0),(p^{s_1},J_1),\dots,(p^{s_k},J_k),\dots,(p^{r},0),\dots,(p^re_0,0) \}
\]
satisfying Proposition \ref{prop ram pol iff}.
Write $J_k=a_kn+b_k$ with $0\le b_k\le n$.
Let $\spol_1,\dots,\spol_\ell$ be the segments of $\rpol$ with endpoints
$(p^{k_i},J_{k_i})$ and $(p^{l_i},J_{l_i})$
and slopes $-\slopenum_i/\slopeden_i$ ($1\le i <\ell$).
For $1\le i <\ell$ let
$\RA_i(x)=\sum_{j=0}^{(p^{l_i}-p^{k_i})/{\slopeden_i}}\RA_{i,j}x^j\in\RK$.

There is an Eisenstein polynomial of degree $p^r e_0$ with ramification polygon $\rpol$
and segments $\spol_1,\dots,\spol_\ell$ with residual polynomials
$\RA_1,\dots,\RA_\ell\in\RK[x]$ if and only if
\begin{enumerate}
\item $\RA_{i,\deg\RA_i}=\RA_{i+1,0}$ for $1\le i< \ell$,
\item $\RA_{i,j}\ne0$ if and only if $j=(q-p^{s_{k_i}})/\slopeden_i$ for some 
$q\in\{p^{s_1},\dots,p^r\}$ with  $p^{k_i}\le q\le p^{l_i}$,
\item if for some $1\le t,q\le u$ we have $b_t=b_q$ and 
$s_{k_i}\le s_t\le s_{l_i}$
and
$s_{k_j}\le s_q\le s_{l_j}$
then
\[
\RA_{i,(p^{s_t}-p^{s_{k_i}})/\slopeden_i}
=
\underline{
{\tbinom{b_t}{p^{s_t}}} {\tbinom{b_t}{p^{s_q}}}^{-1}
(-\varphi_0)^{a_q-a_t}
}
\RA_{j,(p^{s_q}-p^{s_{k_j}})/\slopeden_j}.
\]
\end{enumerate}
\end{proposition}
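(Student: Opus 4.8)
The proof follows the same two-directional strategy as Proposition~\ref{prop ram pol iff}: the necessity of conditions (a)--(c) is extracted from the structural lemmas already proved, and sufficiency is shown by an explicit construction of an Eisenstein polynomial with the prescribed ramification polygon and residual polynomials. For necessity, suppose $\varphi$ is Eisenstein of degree $p^r e_0$ with ramification polygon $\rpol$ and residual polynomials $\RA_1,\dots,\RA_\ell$. Condition~(a) is immediate from Corollary~\ref{cor res pol seg}(b) (equivalently Lemma~\ref{lem res pol}(\ref{lem res pol end})), since the right endpoint of $\spol_i$ is the left endpoint of $\spol_{i+1}$. Condition~(b) is exactly the statement that the coefficient $\RA_{i,j}$ is nonzero precisely at abscissas $q$ that are actual points of $\rpol$ on the segment $\spol_i$: by Proposition~\ref{prop res pol seg}(b) the residual polynomial is $\sum_{i=k}^l \underline{\rho_{p^{s_i}}\alpha^{-J_i}}\,x^{(p^{s_i}-p^{s_k})/\slopeden}$, and by Lemma~\ref{lem rho sim}(b) together with Lemma~\ref{lem vphi} the classes $\underline{\rho_{p^{s_i}}\alpha^{-J_i}}$ at the listed points are nonzero (they equal $\underline{\varphi_{b_i}\binom{b_i}{p^{s_i}}\alpha^{-a_in-n}}$, a unit class), while at abscissas $p^w$ not among the points the valuation $v_\alpha(\rho_{p^w})$ strictly exceeds the ordinate on the segment, forcing the corresponding coefficient to vanish.

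Condition~(c) is the new ingredient. Suppose $b_t=b_q$ with $p^{s_t}$ on segment $\spol_i$ and $p^{s_q}$ on segment $\spol_j$. By Proposition~\ref{prop res pol seg}(b),
\[
\RA_{i,(p^{s_t}-p^{s_{k_i}})/\slopeden_i}=\underline{\varphi_{b_t}\tbinom{b_t}{p^{s_t}}\alpha^{-a_tn-n}},\qquad
\RA_{j,(p^{s_q}-p^{s_{k_j}})/\slopeden_j}=\underline{\varphi_{b_q}\tbinom{b_q}{p^{s_q}}\alpha^{-a_qn-n}}.
\]
Since $b_t=b_q$, the coefficient $\varphi_{b_t}=\varphi_{b_q}$ is the same, and dividing the two expressions gives
\[
\frac{\RA_{i,(p^{s_t}-p^{s_{k_i}})/\slopeden_i}}{\RA_{j,(p^{s_q}-p^{s_{k_j}})/\slopeden_j}}
=\underline{\tbinom{b_t}{p^{s_t}}\tbinom{b_t}{p^{s_q}}^{-1}\alpha^{(a_q-a_t)n}}.
\]
It remains to replace $\alpha^n$ by $-\varphi_0$ modulo $(\alpha)$: from the Eisenstein relation $\varphi(\alpha)=0$ we get $\alpha^n=-\varphi_0-\sum_{i=1}^{n-1}\varphi_i\alpha^i$, and since $v_\alpha(\varphi_0)=n$ while $v_\alpha(\varphi_i\alpha^i)>n$ for $1\le i\le n-1$, we have $\alpha^n\sim-\varphi_0$, hence $\underline{\alpha^n\varphi_0^{-1}}=\underline{-1}$ and more generally $\underline{\alpha^{(a_q-a_t)n}}=\underline{(-\varphi_0)^{a_q-a_t}}$ after clearing the appropriate power of $\pi\mid\varphi_0$; this yields condition~(c) as stated.

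For sufficiency, given data $(\rpol,\RA_1,\dots,\RA_\ell)$ satisfying (a)--(c) together with the hypotheses of Proposition~\ref{prop ram pol iff}, I would construct $\psi=\sum\psi_ix^i$ as in the sufficiency half of Proposition~\ref{prop ram pol iff}, but now pinning down the unit parts of the coefficients $\psi_{b_i}$ rather than just their valuations: for each nonzero $b_i$ appearing as a point on, say, segment $\spol_{j(i)}$, set $\psi_{b_i}$ to be the element of $\OK$ whose valuation is $1+a_i-v_\pi\binom{b_i}{p^{s_i}}$ and whose leading unit is dictated by solving $\underline{\psi_{b_i}\binom{b_i}{p^{s_i}}\alpha^{-a_in-n}}=\RA_{j(i),(p^{s_i}-p^{s_{k_{j(i)}}})/\slopeden_{j(i)}}$ for $\underline{\psi_{b_i}}$, using $\underline{\alpha^n}=\underline{-\psi_0}$. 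Here condition~(c) is exactly what guarantees this assignment is well defined when the same $b_i$ arises from two different points (the two prescriptions for $\underline{\psi_{b_i}}$ agree), paralleling the role of Proposition~\ref{prop ram pol iff}(\ref{prop ram pol iff sameb}) in the earlier proof. All remaining $\psi_j$ with $0<j<n$ are set to $0$. One then repeats the computation of $\rpol_\psi$ from the proof of Proposition~\ref{prop ram pol iff} verbatim to see $\rpol_\psi=\rpol$, and reads off from Proposition~\ref{prop res pol seg}(b) that the residual polynomial of each segment $\spol_j$ of $\rpol_\psi$ is $\sum \underline{\psi_{b_i}\binom{b_i}{p^{s_i}}\alpha^{-a_in-n}}\,x^{(p^{s_i}-p^{s_{k_j}})/\slopeden_j}$, which by construction equals $\RA_j$ — using (b) to match the support and (a) to confirm that the forced agreement of leading/constant coefficients across consecutive segments is consistent with the $\RA_j$ we were handed. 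The main obstacle is the bookkeeping in verifying well-definedness and the $\alpha^n\leftrightarrow-\psi_0$ substitution carefully across segments with shared $b_i$; once condition~(c) is in hand this is routine, and the valuation-level argument that $\rpol_\psi=\rpol$ is literally the one already given, so no new difficulty arises there.
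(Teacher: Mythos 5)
Your proposal is correct and follows essentially the same two-step strategy as the paper: necessity via Lemma \ref{lem res pol}(\ref{lem res pol end}) for (a), Proposition \ref{prop res pol seg}(b) for (b), and the $\varphi_{b_t}=\varphi_{b_q}$ equation with the $\alpha^n\sim-\varphi_0$ substitution for (c); sufficiency by constructing $\psi$ with prescribed unit parts of $\psi_{b_i}$, checking well-definedness via (a) at vertices and (c) at repeated $b_i$, and reusing the ramification-polygon computation from the proof of Proposition \ref{prop ram pol iff}. The only slight organizational difference is that the paper separates the vertex well-definedness check (via (a)) from the repeated-$b_i$ check (via (c)) explicitly at the assignment stage, while you fold the role of (a) into the final residual-polynomial verification; both resolve the same consistency issue.
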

\begin{proof}

Suppose that $\varphi$ is an Eisenstein polynomial of degree $p^re_0$
with ramification polygon $\rpol$ and segments $\spol_1,\dots,\spol_\ell$ 
with residual polynomials $\RA_1,\dots,\RA_\ell\in\RK[x]$.
Property (a) is given by Lemma \ref{lem res pol} \ref{lem res pol end} and 
property (b) is given by Proposition \ref{prop res pol seg} (b).
To establish property (c), suppose that for some $1\le t,q\le u$ we have $b_t=b_q$ and 
$s_{k_i}\le s_t\le s_{l_i}$
and
$s_{k_j}\le s_q\le s_{l_j}$.
From Proposition \ref{prop res pol seg}, we have that
\[
\RA_{i,(p^{s_t}-p^{s_{k_i}})/\slopeden_i}=\varphi_{b_{t}} {\tbinom{b_{t}}{p^{s_{t}}}}\alpha^{-a_{t} n-n}
\textnormal{ and }
\RA_{j,(p^{s_q}-p^{s_{k_j}})/\slopeden_j}=\varphi_{b_{q}} {\tbinom{b_{q}}{p^{s_{q}}}}\alpha^{-a_{q} n-n}.
\]
As $b_t=b_q$, we have that $\varphi_{b_t}=\varphi_{b_q}$. Since
\[
\RA_{i,(p^{s_t}-p^{s_{k_i}})/\slopeden_i} {\tbinom{b_t}{p^{s_t}}}^{-1} \alpha^{a_t n+n}
= \varphi_{b_{t}} = \varphi_{b_{q}} =
\RA_{j,(p^{s_q}-p^{s_{k_j}})/\slopeden_j} {\tbinom{b_t}{p^{s_q}}}^{-1} \alpha^{a_q n+n},
\]
we have
\[
\RA_{i,(p^{s_t}-p^{s_{k_i}})/\slopeden_i}
=
{\tbinom{b_t}{p^{s_t}}} {\tbinom{b_t}{p^{s_q}}}^{-1}
(-\varphi_0)^{a_q-a_t}
\RA_{j,(p^{s_q}-p^{s_{k_j}})/\slopeden_j}.
\]

Conversely, suppose that $\rpol$ is a ramification polygon with segments $\spol_1,\dots,\spol_\ell$
with residual polynomials $\RA_1,\dots,\RA_\ell\in\RK[x]$ with properties (a), (b), and (c) of the proposition.
Let $\psi$ be a polynomial in $\OK[x]$ with $\psi_{e_0 p^r}=1$, $v_\pi(\psi_0)=1$ and
\[
\underline\psi_{b_t,1+a_t-v_\pi\binom{b_t}{p^{s_t}}}=
\RA_{i,(p^{s_t}-p^{s_{k_i}})/\slopeden_i}
\underline{
\binom{b_t}{p^{s_t}}^{-1}(-\psi_{0,1})^{a_t+1}\pi^{v_\pi\binom{b_t}{p^{s_t}}}
} \textnormal{ for $i$ with }
p^{k_i}\le p^{s_t}\le p^{l_i}
\]
for each point $(p^{s_t},a_tn+b_t)$ in $\rpol$.
For $\psi$ to be well defined, we must check that the same coefficient is not assigned different values.
Multiple assignments occur at vertices (when one point contributes to two $\RA_i$)
and when multiple points have the same $b_t$.
If $(p^{s_t},a_tn+b_t)$ is a vertex of $\rpol$, then we have
\begin{align*}
\underline\psi_{b_t,1+a_t-v_\pi\binom{b_t}{p^{s_t}}}&=
\RA_{i,(p^{s_t}-p^{s_{k_i}})/\slopeden_i}
\underline{
\binom{b_t}{p^{s_t}}^{-1}(-\psi_{0,1})^{a_t+1}\pi^{v_\pi\binom{b_t}{p^{s_t}}}
}\\
&=
\RA_{i+1,(p^{s_t}-p^{s_{k_{i+1}}})/\slopeden_{i+1}}
\underline{
\binom{b_t}{p^{s_t}}^{-1}(-\psi_{0,1})^{a_t+1}\pi^{v_\pi\binom{b_t}{p^{s_t}}}
}.
\end{align*}
Cancellation gives us $\RA_{i,(p^{s_t}-p^{s_{k_i}})/\slopeden_i} = \RA_{i+1,(p^{s_t}-p^{s_{k_{i+1}}})/\slopeden_{i+1}}$.
As a vertex, $p^{s_t}$ is the abscissa of both the right endpoint of $\spol_i$ ($p^{s_{l_i}} = p^{s_t}$)
and the left endpoint of $\spol_{i+1}$ ($p^{s_{k_{i+1}}} = p^{s_t}$).
Thus $(p^{s_t}-p^{s_{k_i}})/\slopeden_i = \deg\RA_i$ and $(p^{s_t}-p^{s_{k_{i+1}}})/\slopeden_{i+1} = 0$.
So, $\RA_{i,\deg\RA_i} = \RA_{i+1,0}$, which is property (a).
On the other hand, if for some $1\le t,q\le u$, we have $b_t=b_q$,
with $s_{k_i}\le s_t\le s_{l_i}$
and
$s_{k_j}\le s_q\le s_{l_j}$,
then let $b=b_t=b_q$ and we have
\begin{align*}
\underline\psi_{b,1+a_t-v_\pi\binom{b_t}{p^{s_t}}}&=
\RA_{i,(p^{s_t}-p^{s_{k_i}})/\slopeden_i}
\underline{
\binom{b}{p^{s_t}}^{-1}(-\psi_{0,1})^{a_t+1}\pi^{v_\pi\binom{b}{p^{s_t}}}
}\\
\underline\psi_{b,1+a_q-v_\pi\binom{b}{p^{s_q}}}&=
\RA_{j,(p^{s_q}-p^{s_{k_j}})/\slopeden_j}
\underline{
\binom{b}{p^{s_q}}^{-1}(-\psi_{0,1})^{a_q+1}\pi^{v_\pi\binom{b}{p^{s_q}}}
}.
\end{align*}
As $\rpol$ is a ramification polygon, by Proposition \ref{prop ram pol iff} (b),
$b_t = b_q$ implies that $a_t = a_q - v_\pi\binom{b}{p^{s_q}} + v_\pi\binom{b}{p^{s_t}}$,
so we have that $1+a_t-v_\pi\binom{b}{p^{s_t}}=1+a_q-v_\pi\binom{b}{p^{s_q}}$.
These two assignments of coefficients of $\psi_b$ set the same coefficient,
and by property (c), they have the same value.
Thus, $\psi$ is well-defined, and we have set at most one $\pi$-adic coefficient for each
polynomial coefficient.

By property (b), none of the assigned coefficients are zero and no others are non-zero.
Thus, $v_\pi(\psi_{b_t})=1+a_t-v_\pi\binom{b_t}{p^{s_t}}$, and 
as per the construction in the proof of Proposition \ref{prop ram pol iff},
$\psi$ is an Eisenstein polynomial with ramification polygon $\rpol$.

Next we consider the residual polynomials of the segments of $\rpol$ as given by $\psi$.
Let $\spol_i$ be a segment of $\rpol$ containing points $(p^{s_k},J_k),\ldots,(p^{s_l},J_l)$ of slope $-h_i/e_i$.
Let $\RA^*_i$ be the residual polynomial of $\spol_i$.
From Proposition \ref{prop res pol seg}, for each point $(p^{s_t},a_t n+b_t)$ with $s_k\leq s_t\leq s_l$, we get
\[
\RA^*_{i,(p^{s_t}-p^{s_k})/e} = \underline{\psi_{b_t} \binom{b_t}{p^{s_t}} \alpha^{-a_tn-n}}.
\]
We need the right side to reduce to our intended value.
By our assignment,
\[\psi_{b_t}
= \RA_{i,(p^{s_t}-p^{s_{k_i}})/\slopeden_i}
\underline{
\binom{b_t}{p^{s_t}}^{-1}(-\psi_{0,1})^{a_t+1}\pi^{v_\pi\binom{b_t}{p^{s_t}}}
\pi^{1+a_t-v_\pi\binom{b_t}{p^{s_t}}}.
}
\]
With $\alpha^n\sim -\norm_{\KK(\alpha)/\KK}(\alpha)=-\psi_0\sim-\psi_{0,1}\pi$ we get
\[
\underline{\psi_{b_t} \tbinom{b_t}{p^{s_t}} \alpha^{-a_tn-n}}
=
\RA_{i,(p^{s_t}-p^{s_{k_i}})/\slopeden_i}
\underline{
\tbinom{b_t}{p^{s_t}}^{-1}(-\psi_{0,1})^{a_t+1}\pi^{v_\pi\binom{b_t}{p^{s_t}}}
\pi^{1+a_t-v_\pi\binom{b_t}{p^{s_t}}}
\tbinom{b_t}{p^{s_t}} (-\psi_{0,1}\pi)^{-a_t-1}
}
\]
from which cancellation gives us our desired result $\RA^*_{i,(p^{s_t}-p^{s_k})/e} = \RA_{i,(p^{s_t}-p^{s_k})/e}$.
\end{proof}

\subsection*{The invariant $\invA$ of $\KL/\KK$}

We introduce an invariant of $\KL/\KK$, 
that is compiled from the residual polynomials of the 
segments of the ramification polygon of $\varphi$.   
From the proof of \cite[Proposition 4.4]{greve-pauli} we obtain:

\begin{lemma}\label{lem res pol zeros}
Let $\varphi\in\OK[x]$ be Eisenstein and $\alpha$ a root of $\varphi$ and $\KL=\KK(\alpha)$.
Let $\spol$ be a segment of the ramification polygon of $\varphi$ of slope $-\slopenum/\slopeden$
and let $\RA$ be its residual polynomial.
Let $\beta=\delta\alpha$ with $v_\alpha(\delta)=0$ be another uniformizer of $\KL$ 
and $\psi$ its minimal polynomial.
If $\underline\gamma_1,\dots,\underline\gamma_m$ are the (not necessarily distinct) zeros of $\RA$ then 
$\underline\gamma_1/\underline\delta^h,\dots,\underline\gamma_m/\underline\delta^h$ are the
zeros of the residual polynomial of the segment of slope $-\slopenum/\slopeden$
of the ramification polygon of $\psi$.
\end{lemma}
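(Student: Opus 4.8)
The plan is to compute directly how the residual polynomial of a fixed segment of the ramification polygon transforms under the change of uniformizer $\alpha \mapsto \beta = \delta\alpha$. I would first record that $\beta$ is again a uniformizer of $\KL$ (since $v_\alpha(\delta)=0$), so $\psi$ is Eisenstein of the same degree $n$, and the ramification polygon of $\psi$ is the same polygon $\rpol$ as that of $\varphi$ (this is the invariance statement already cited after Definition \ref{def ram pol}); in particular the segment $\spol$ of slope $-\slopenum/\slopeden$ appears in the ramification polygon of $\psi$ as well, with the same lattice points $(p^{s_k},J_k),\dots,(p^{s_l},J_l)$. Denote by $\rho^\varphi$ and $\rho^\psi$ the ramification polynomials of $\varphi$ and $\psi$ respectively, and by $\RA$, $\RA^\psi$ the residual polynomials of the segment $\spol$ for each.

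The key computational step uses Lemma \ref{lem rho sim}(b) (equivalently the formula in Proposition \ref{prop res pol seg}(b)): for each point $(p^{s_i},J_i)$ on $\spol$ with $J_i = a_i n + b_i$,
\[
\rho^\varphi_{p^{s_i}} \sim \varphi_{b_i}\binom{b_i}{p^{s_i}}\alpha^{b_i-n},
\qquad
\rho^\psi_{p^{s_i}} \sim \psi_{b_i}\binom{b_i}{p^{s_i}}\beta^{b_i-n}.
\]
Since $\beta = \delta\alpha$, the minimal polynomial $\psi$ of $\beta$ has $\psi_{b_i}$ expressible in terms of the $\varphi_j$; more precisely one knows $\varphi_0 = \pm\norm_{\KL/\KK}(\alpha)$ and $\psi_0 = \pm\norm_{\KL/\KK}(\beta) = \delta_0^n\varphi_0$-type relations, but the efficient route is to substitute $\alpha = \delta^{-1}\beta$ into $\rho^\varphi$ and compare Newton-polygon data segment by segment, which is exactly the setup of Definition \ref{def res poly}. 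Under this substitution the coefficient at abscissa $p^{s_i}$ on the segment $\spol$ picks up a factor $\underline\delta$ raised to a power determined by the abscissa and the endpoint normalization; tracking the definition $\RA(x)=\sum_{i}\underline{\rho_{p^{s_i}}\alpha^{-J_i}}\,x^{(p^{s_i}-p^{s_k})/\slopeden}$ one finds that the $j$-th coefficient is scaled by $\underline\delta^{\,j h + c}$ for a constant $c$ (coming from the left-endpoint normalization $\alpha^{-J_k}\mapsto \beta^{-J_k}$) common to all coefficients. A common scalar multiple of the residual polynomial does not change its zeros' description up to that scalar, so what matters is the relative factor $\underline\delta^{\,jh}$ between the $j$-th and $0$-th coefficients: this means $\RA^\psi(x)$ and $\RA(\underline\delta^{h} x)$ agree up to an overall nonzero constant. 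Consequently if $\underline\gamma_1,\dots,\underline\gamma_m$ are the zeros of $\RA$, the zeros of $\RA^\psi$ are $\underline\gamma_1/\underline\delta^{h},\dots,\underline\gamma_m/\underline\delta^{h}$, as claimed.

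The main obstacle I anticipate is the bookkeeping of the normalizing powers of $\alpha$ and $\beta$ at the left endpoint of $\spol$ versus at interior points: one must verify that, after the substitution $\alpha=\delta^{-1}\beta$ and re-expansion, the valuations are unchanged (so that $\spol$ really is still a segment of slope $-h/e$ for $\psi$, rather than shifting) and that the $\underline\delta$-power appearing in the reduced coefficient $\underline{\rho^\psi_{p^{s_i}}\beta^{-J_i}}$ is exactly $\underline\delta$ to the power $J_i - (\text{left-endpoint normalization}) = (p^{s_i}-p^{s_k})\cdot\frac{h}{e}\cdot(\text{sign})$ — i.e. that it equals $\underline\delta^{\,jh}$ where $j=(p^{s_i}-p^{s_k})/e$ is the index of the coefficient. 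This is a finite check that can be done using the identity $\beta^{b_i-n}=\delta^{b_i-n}\alpha^{b_i-n}$ together with $J_i = a_i n + b_i$ and the fact that on a segment of slope $-h/e$ one has $J_i - J_k = -\frac{h}{e}(p^{s_i}-p^{s_k})$; everything else is routine cancellation of the kind already carried out in the proof of Proposition \ref{prop A}.
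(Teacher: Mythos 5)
The paper does not actually prove this lemma in the text: it states that it is obtained ``from the proof of \cite[Proposition 4.4]{greve-pauli}'' and moves on, so there is no internal argument to compare against. Your computation is correct and is exactly the one the citation alludes to; indeed the proof of Lemma \ref{lem A star} later in the paper carries out the same normalization bookkeeping. The heart of it, as you say, is that on a segment $\spol$ of slope $-h/e$ with left endpoint $(p^{s_k},J_k)$ and points $(p^{s_i},J_i)$ one has $J_i-J_k=-\frac{h}{e}(p^{s_i}-p^{s_k})=-jh$ for $j=(p^{s_i}-p^{s_k})/e$, so replacing the normalizer $\alpha^{-J_i}$ in Definition \ref{def res poly} by $\beta^{-J_i}=\delta^{-J_i}\alpha^{-J_i}$ multiplies the $j$th coefficient of the residual polynomial by $\underline\delta^{-J_i}=\underline\delta^{-J_k}\underline\delta^{jh}$. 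Since $\underline\delta^{-J_k}\neq 0$, this gives $\RA^\psi(x)=\underline\delta^{-J_k}\,\RA(\underline\delta^{h}x)$ and hence the claimed scaling of the zeros by $\underline\delta^{-h}$.

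The one point you flag but leave open is worth making explicit, because it is where the ``bookkeeping'' could actually go wrong. The hypothesis is $v_\alpha(\delta)=0$, so a priori $\delta\in\calO_\KL^\times$ and not $\OK^\times$; your clean identity $\psi(x)=\delta^n\varphi(x/\delta)$, which gives $\rho^\psi=\rho^\varphi$ on the nose and makes the rest immediate, is literally correct only when $\delta\in\OK$. Two ways to close this. Either observe that since $\KL/\KK$ is totally ramified one has $\RL=\RK$, so $\underline\delta\in\RK$ and one may replace $\delta$ by any lift in $\OK^\times$ without changing $\underline\delta$ (this is implicitly what the paper does in Theorem \ref{theo A}, where only $\underline\delta\in\RK^\times$ enters); note, however, that this changes $\beta$ and hence $\psi$, so one still needs the invariance you already invoked. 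Or argue directly for $\delta\in\calO_\KL^\times$: writing $\delta=\sum_i d_i\alpha^i$ with $d_i\in\OK$ one gets $v(\delta^\sigma-\delta)\ge v(\alpha^\sigma-\alpha)=v(\alpha)+v(\alpha^\sigma/\alpha-1)>v(\alpha^\sigma/\alpha-1)$ for every embedding $\sigma\neq\mathrm{id}$, so $\beta^\sigma/\beta-1\sim\alpha^\sigma/\alpha-1$; hence $\rho^\psi$ and $\rho^\varphi$ have the same Newton polygon and the same residual polynomials when both are normalized by $\alpha$, and the entire discrepancy is the $\alpha\mapsto\beta$ renormalization computed above. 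With one of these sentences added, the proof is complete.
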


Thus the zeros of the residual polynomials of all  segments of the ramification polygon change by powers of the same 
element $\underline\delta$ when transitioning from a uniformizer $\alpha$ to a uniformizer $\delta\alpha$.
With Proposition \ref{prop A} we obtain:
{
\begin{theorem}\label{theo A}
Let $\spol_1,\dots,\spol_\ell$ be the segments of the ramification polygon 
$\rpol$ of an Eisenstein polynomial  $\varphi\in\OK[x]$.
For $1\le i\le \ell$ let $-h_i/e_i$ be the slope of $\spol_i$
and $\RA_i(x)=\sum_{j=0}^{m_i}$ its residual polynomial.
Then 
\begin{equation}\label{eq A}
\invA=\left\{ 
\left(\gamma_{\delta,1}{\RA_1}(\underline\delta^{h_1} x),\dots,
\gamma_{\delta,\ell}{\RA_\ell}(\underline\delta^{h_\ell} x)\right) 
: 
\underline\delta\in\RK^\times
\right\}
\end{equation}
where 
$
\gamma_{\delta,\ell}=\delta^{-h_\ell\deg\RA_\ell},
$
and
$
\gamma_{\delta,i}=\gamma_{\delta,i+1}\delta^{-h_i\deg\RA_i}
$
for $1\le i\le \ell-1$
is an invariant of the extension $\KK[x]/(\varphi)$. 
\end{theorem}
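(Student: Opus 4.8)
The plan is to show that any two Eisenstein polynomials defining isomorphic extensions of $\KK$ produce the same set $\invA$, so that $\invA$ depends only on $\KK[x]/(\varphi)$. Fix $\varphi$ with root $\alpha$ and set $\KL=\KK(\alpha)$; since $\varphi$ is Eisenstein, $\alpha$ is a uniformizer of $\OL$, and since $\KL/\KK$ is totally ramified its residue field equals that of $\KK$, so $\OL/(\alpha)$ is canonically identified with $\RK$ and the reduction $\OL^\times\to\RK^\times$ is onto. Conversely, every uniformizer $\beta$ of $\OL$ has $[\KK(\beta):\KK]=n$ (its valuation $1/n$ forces the ramification index of $\KK(\beta)/\KK$ to be $n$) and hence Eisenstein minimal polynomial. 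Thus, after transporting along a $\KK$-isomorphism, any Eisenstein polynomial defining an extension isomorphic to $\KL$ is the minimal polynomial $\psi$ of some uniformizer $\beta=\delta\alpha$ with $\delta\in\OL^\times$ and $\underline\delta\in\RK^\times$. Because the ramification polygon is an invariant of $\KL/\KK$, the polynomials $\varphi$ and $\psi$ share the same ramification polygon $\rpol$, hence the same segments $\spol_1,\dots,\spol_\ell$, the same slopes $-h_i/e_i$, and the same residual polynomial degrees $m_i=\deg\RA_i$.

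The core step is to express the residual polynomials $\RB_1,\dots,\RB_\ell$ of $\psi$ in terms of the residual polynomials $\RA_1,\dots,\RA_\ell$ of $\varphi$ and to prove
\[
\RB_i(x)=\gamma_{\delta,i}\,\RA_i(\underline\delta^{h_i}x)\qquad(1\le i\le\ell),
\]
which then exhibits $(\RB_1,\dots,\RB_\ell)$ as precisely the element of $\invA$ (as computed from $\varphi$) indexed by $\underline\delta$. By Lemma \ref{lem res pol zeros}, the zeros of $\RB_i$ in $\overline\RK$ are those of $\RA_i$ each divided by $\underline\delta^{h_i}$, so $\RB_i$ and $\RA_i(\underline\delta^{h_i}x)$ have the same multiset of zeros; since both have degree exactly $m_i$ (their leading coefficients are nonzero by Proposition \ref{prop res pol seg}(b)), they differ by a constant factor $c_i\in\RK^\times$. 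I would identify $c_i$ with $\gamma_{\delta,i}$ by downward induction on $i$. For $i=\ell$, both $\RA_\ell$ and $\RB_\ell$ are monic by Corollary \ref{cor res pol seg}(a), while the leading coefficient of $\RA_\ell(\underline\delta^{h_\ell}x)$ is $\underline\delta^{h_\ell m_\ell}$, giving $c_\ell=\underline\delta^{-h_\ell m_\ell}=\gamma_{\delta,\ell}$. For the inductive step, Corollary \ref{cor res pol seg}(b) gives $\RB_{i,m_i}=\RB_{i+1,0}$ and $\RA_{i,m_i}=\RA_{i+1,0}$; evaluating the inductive hypothesis $\RB_{i+1}=\gamma_{\delta,i+1}\RA_{i+1}(\underline\delta^{h_{i+1}}x)$ at $x=0$ gives $\RB_{i+1,0}=\gamma_{\delta,i+1}\RA_{i+1,0}$, so $\RB_{i,m_i}=\gamma_{\delta,i+1}\RA_{i,m_i}$; comparing leading coefficients in $\RB_i=c_i\RA_i(\underline\delta^{h_i}x)$ then yields $c_i=\gamma_{\delta,i+1}\underline\delta^{-h_i m_i}=\gamma_{\delta,i}$, which is exactly the recursion defining $\gamma_{\delta,i}$.

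To finish, I would deduce that $\invA$ does not depend on the choice of defining polynomial. Writing any competitor as the minimal polynomial of a uniformizer $\epsilon\beta$ with $\epsilon\in\OL^\times$, the displayed identity applied to the pair $(\alpha,\epsilon\beta)$ (whose ratio reduces to $\underline\epsilon\,\underline\delta$) and to the pair $(\beta,\epsilon\beta)$ (whose ratio reduces to $\underline\epsilon$) shows that the element of the set computed from $\psi$ indexed by $\underline\epsilon$ coincides with the element of the set computed from $\varphi$ indexed by $\underline\epsilon\,\underline\delta$; since $\underline\epsilon\mapsto\underline\epsilon\,\underline\delta$ is a bijection of $\RK^\times$, the two sets are equal. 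I expect the main obstacle to be the leading-coefficient bookkeeping in the inductive step — matching the telescoping relations of Corollary \ref{cor res pol seg} against the recursion for $\gamma_{\delta,i}$ — together with the small but essential observation, implicit in Lemma \ref{lem res pol zeros}, that the residual polynomials of $\delta\alpha$ depend on $\delta$ only through its residue class $\underline\delta$.
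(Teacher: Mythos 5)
Your proof is correct. The paper itself states Theorem \ref{theo A} without a dedicated proof, offering only the remark following Lemma \ref{lem res pol zeros} and the phrase ``With Proposition \ref{prop A} we obtain''; the underlying computation --- that the residual polynomials of the minimal polynomial of $\delta\alpha$ are exactly $\gamma_{\delta,i}\RA_i(\underline\delta^{h_i}x)$ --- appears explicitly only later, in the proof of Lemma \ref{lem A star}, where it is verified by substituting $\psi(x)=\delta^n\varphi(\delta^{-1}x)$ into the coefficient formula of Proposition \ref{prop res pol seg} and simplifying term by term to $\underline\delta^{-J_j}\RA_{i,j}$, followed by a matching simplification of $\gamma_{\delta,i}\RA_i(\underline\delta^{h_i}x)$ using $\gamma_{\delta,i}=\underline\delta^{-J_{k_i}}$. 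Your route is genuinely different and arguably cleaner: instead of computing coefficients directly, you use Lemma \ref{lem res pol zeros} to conclude that $\RB_i$ and $\RA_i(\underline\delta^{h_i}x)$ have the same multiset of zeros and the same degree, hence differ by a scalar $c_i\in\RK^\times$, and then determine $c_i$ by downward induction, anchored by the monicity of $\RA_\ell,\RB_\ell$ (Corollary \ref{cor res pol seg}(a)) and propagated by the telescoping relation $\RA_{i,m_i}=\RA_{i+1,0}$ (Corollary \ref{cor res pol seg}(b)); the resulting recursion for $c_i$ is literally the recursion defining $\gamma_{\delta,i}$. This avoids the explicit $J_j$-bookkeeping at the cost of invoking the structure encoded in Corollary \ref{cor res pol seg}. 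Your concluding step --- applying the transformation identity to the pairs $(\alpha,\epsilon\beta)$ and $(\beta,\epsilon\beta)$ and using the bijection $\underline\epsilon\mapsto\underline\epsilon\underline\delta$ of $\RK^\times$ --- is also a slick way to obtain set equality without separately verifying the multiplicativity $\gamma_{\nu,i}\gamma_{\delta,i}=\gamma_{\nu\delta,i}$.
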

}

\begin{example}\label{ex theo A}
Let $\varphi(x)=x^9+6x^3+9x+3$.  
The ramification polygon of $\phi$ consists of the two segments 
with end points $(1,10),(3,3)$ and $(3,3),(9,0)$
and residual polynomials $1+2x$ and $2+x^3$.
We get
\[
\invA=\{(1+2x,2+x^3),(1+x,1+x^3)\}.
\]
\end{example}

\subsection*{Generating Polynomials}
We show how the choice of a representative of 
the invariant $\invA$ determines some of the coefficients of the generating polynomials with this invariant.

\begin{lemma}\label{lem A fix}
Let $\varphi\in\OK[x]$ be Eisenstein of degree $n$. 
Let $\spol$ be a segment of 
ramification polygon
of $\varphi$ with endpoints
$(p^{s_k},a_k n+b_k)$ and
$(p^{s_l},a_l n+b_l)$ and residual polynomial $\RA(x)=\sum_{j=1}^{p^{s_l}-p^{s_k}}\RA_j x^j\in\RK[x]$.
If $(p^{s_i},a_i n+b_i)$
is a point on $\spol$ with $b_i \neq 0$ then
\[
\underline\varphi_{b_i,j}=
\RA_{(p^{s_i}-p^{s_k})/\slopeden}
\underline{
{\tbinom{b_i}{p^{s_i}}}^{-1}(-\varphi_{0,1})^{a_i+1}\pi^{v_\pi\binom{b_i}{p^{s_i}}}
}
\]
where $j=a_i+1-v_\pi\binom{b_i}{p^{s_i}}$.
\end{lemma}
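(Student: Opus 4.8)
The plan is to derive the lemma by inverting the formula of Proposition~\ref{prop res pol seg}(b), which identifies the coefficient of $x^{(p^{s_i}-p^{s_k})/\slopeden}$ in the residual polynomial of the segment $\spol$ as
\[
\RA_{(p^{s_i}-p^{s_k})/\slopeden}=\underline{\varphi_{b_i}\tbinom{b_i}{p^{s_i}}\alpha^{-a_i n-n}}.
\]
Thus the whole content of the lemma is to solve this single identity in $\RK^\times$ for the $\pi$-adic digit of $\varphi_{b_i}$ that carries the information, namely $\underline\varphi_{b_i,j}$.

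First I would pin down that digit. Since $(p^{s_i},a_i n+b_i)$ is a point of the ramification polygon of $\varphi$ and $b_i\neq 0$, Lemma~\ref{lem vphi} gives $v_\pi(\varphi_{b_i})=a_i+1-v_\pi\tbinom{b_i}{p^{s_i}}=j$, so $\varphi_{b_i}\sim\varphi_{b_i,j}\pi^{j}$ and $\underline\varphi_{b_i,j}\in\RK^\times$. Next I would replace the power of $\alpha$ by a power of $\pi$: as $\varphi$ is Eisenstein, $\alpha^n=-(\varphi_{n-1}\alpha^{n-1}+\dots+\varphi_1\alpha+\varphi_0)$ and every summand other than $\varphi_0$ has $v_\alpha$ strictly greater than $n$, so $\alpha^n\sim-\varphi_0\sim-\varphi_{0,1}\pi$, hence $\alpha^{-a_i n-n}=(\alpha^{n})^{-(a_i+1)}\sim(-\varphi_{0,1}\pi)^{-(a_i+1)}$.

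Substituting both equivalences into the displayed identity, using that $\sim$ is multiplicative on $\clK^\times$, and collecting the powers of $\pi$ via $\pi^{j}\cdot\pi^{-(a_i+1)}=\pi^{-v_\pi\binom{b_i}{p^{s_i}}}$, I obtain an equivalence of valuation-$0$ elements
\[
\RA_{(p^{s_i}-p^{s_k})/\slopeden}\sim\varphi_{b_i,j}\,\tbinom{b_i}{p^{s_i}}\,(-\varphi_{0,1})^{-(a_i+1)}\,\pi^{-v_\pi\binom{b_i}{p^{s_i}}}.
\]
Since $\sim$ on elements of valuation $0$ descends to equality of residues, I may reduce modulo $(\pi)$ and solve for $\underline\varphi_{b_i,j}$; the claimed formula then follows because $\underline{\tbinom{b_i}{p^{s_i}}\pi^{-v_\pi\binom{b_i}{p^{s_i}}}}$ is invertible in $\RK^\times$ with inverse $\underline{\tbinom{b_i}{p^{s_i}}^{-1}\pi^{v_\pi\binom{b_i}{p^{s_i}}}}$, and $(-\varphi_{0,1})^{-(a_i+1)}$ inverts to $(-\varphi_{0,1})^{a_i+1}$.

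The step needing the most care — and the likeliest place for a slip — is the bookkeeping with $\sim$: one must verify that $\gamma\sim\delta$ and $\gamma'\sim\delta'$ imply $\gamma\gamma'\sim\delta\delta'$, that $\gamma\sim\delta$ implies $\gamma^{-1}\sim\delta^{-1}$ (so that raising $\alpha^{n}\sim-\varphi_{0,1}\pi$ to the negative power $-(a_i+1)$ is legitimate), and that $\sim$ between two units descends to equality in $\RK$. Each of these is immediate from the definition $\gamma\sim\delta\iff v(\gamma-\delta)>v(\gamma)$, but together they are precisely what turns the formal manipulation of the power of $\alpha$ into a rigorous computation; everything else is the collection of $\pi$-exponents already performed above. (This is, of course, exactly the assignment used to build $\psi$ in the proof of Proposition~\ref{prop A}, now read as a \emph{forced} identity for an arbitrary Eisenstein $\varphi$ with the given ramification polygon and residual polynomials.)
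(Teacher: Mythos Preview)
Your proposal is correct and follows essentially the same route as the paper: start from the coefficient formula of Proposition~\ref{prop res pol seg}(b), use Lemma~\ref{lem vphi} to identify $v_\pi(\varphi_{b_i})=j$ so that $\varphi_{b_i}\sim\varphi_{b_i,j}\pi^{j}$, replace $\alpha^{n}$ by $-\varphi_{0,1}\pi$ via the Eisenstein relation, and then collect $\pi$-powers and pass to residues. The paper additionally routes the valuation computation through Lemma~\ref{lem rho sim}, but this is the same content as your direct appeal to Lemma~\ref{lem vphi}; your extra care with the multiplicativity and invertibility of $\sim$ is a welcome clarification rather than a departure.
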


\begin{proof}
By Lemma \ref{lem vphi}, $v_\pi(\varphi_{b_i}) = j$ and
by Proposition \ref{prop res pol seg}
\[
\RA(x)=\sum_{i=k}^l\underline{
\varphi_{b_i} {\tbinom{b_i}{p^{s_i}}}\alpha^{-a_i n-n}
}\,x^{(p^{s_i}-p^{s_k})/\slopeden}.
\]
Thus
$
\RA_{(p^{s_i}-p^{s_k})/\slopeden}=\varphi_{b_i} {\binom{b_i}{p^{s_i}}}\alpha^{-a_i n-n}
$.
With $\alpha^n\sim -\norm_{\KK(\alpha)/\KK}(\alpha)=-\varphi_0\sim-\varphi_{0,1}\pi$ we get
\[
\RA_{(p^{s_i}-p^{s_k})/\slopeden}=\varphi_{b_i} {\tbinom{b_i}{p^{s_i}}}(-\varphi_0)^{-a_i-1}.
\]
As by Lemma \ref{lem rho sim} 
$v_\alpha(\varphi_{b_i})=v_\alpha(\rho_{p^{s_i}})-v_\alpha\binom{b_i}{p^{s_i}}-b_i+n
=a_i n+b_i-v_\alpha\binom{b_i}{p^{s_i}}-b_i+n = n(a_i+1)-v_\alpha\binom{b_i}{p^{s_i}}$
we have 
$
\varphi_{b_i}\sim\varphi_{b_i,j}\pi^{a_i+1-v_\pi\binom{b_i}{p^{s_i}}}.
$
Therefore
\[
\RA_{(p^{s_i}-p^{s_k})/\slopeden}=
\underline{\varphi_{b_i,j}{\tbinom{b_i}{p^{s_i}}}(-\varphi_{0,1}\pi)^{-a_i-1}\pi^{a_i+1-v_\pi\binom{b_i}{p^{s_i}}}}
=\underline{
\varphi}_{b_i,j}
(-\underline{\varphi}_{0,1})^{-a_i-1}
\underline{
{\tbinom{b_i}{p^{s_i}}}
\pi^{-v_\pi\binom{b_i}{p^{s_i}}}.
}\qedhere
\]
\end{proof}

A change of the uniformizer $\alpha$ of $\KL=\KK(\alpha)$ to $\delta\alpha$ with $v(\delta)=0$ that determines the representative
$(\RA_1,\dots,\RA_\ell)\in\invA$ also effects the constant coefficient of the generating polynomial.
Namely if the Eisenstein polynomial $\varphi=x^n+\sum_{i=0}^{n-1}\varphi_i x^i\in\OK[x]$ is the minimal polynomial of $\alpha$ then
$
\psi(x)=\delta^n \varphi\left(\frac{x}{\delta}\right)
$
with $\psi_{0,1}=\delta^n\varphi_{0,1}$
is the minimal polynomial of $\delta\alpha$.

\begin{lemma}\label{lem phi0}
Let $\varphi\in\OK[x]$ be Eisenstein of degree $n$ and $\RS{0}: \RK\to\RK, a\mapsto a^n$.
\begin{enumerate}
\item If and only if $\underline\delta\in\RS{0}(\RK)$, there is $\psi\in\OK[x]$ Eisenstein with $\underline{\psi}_{0,1}=\underline{\delta}\underline{\varphi}_{0,1}$ 
such that $\KK[x]/(\psi)\cong\KK[x]/(\varphi)$.
\item If $n=p^r$ for some $r\in\N$ then $\RS{0}$ is surjective and there is
$\psi\in\OK[x]$ Eisenstein with $\underline{\psi}_{0,1}=1$
such that $\KK[x]/(\psi)\cong\KK[x]/(\varphi)$.
\end{enumerate}
\end{lemma}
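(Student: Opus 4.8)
The plan is to analyze the effect on the constant coefficient of the substitution $x\mapsto x/\delta$ that we have already identified. Recall from the discussion preceding the lemma that if $\varphi$ is the minimal polynomial of a uniformizer $\alpha$ of $\KL=\KK(\alpha)$, then $\psi(x)=\delta^n\varphi(x/\delta)$ is the minimal polynomial of $\delta\alpha$, and $\psi_{0,1}=\delta^n\varphi_{0,1}$ (up to the $\sim$-equivalence coming from higher-order terms, which does not affect the leading $\pi$-adic coefficient). So $\underline\psi_{0,1}=\underline\delta^{\,n}\,\underline\varphi_{0,1}=\RS{0}(\underline\delta)\,\underline\varphi_{0,1}$. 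This immediately shows one direction of (a): if $\underline c\in\RS{0}(\RK)$, say $\underline c=\underline\delta^{\,n}$, then scaling the uniformizer by $\delta$ produces an Eisenstein $\psi$ with $\underline\psi_{0,1}=\underline c\,\underline\varphi_{0,1}$ and $\KK[x]/(\psi)\cong\KK[x]/(\varphi)$.

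For the converse direction of (a) I would argue as follows. Suppose $\psi\in\OK[x]$ is Eisenstein with $\underline\psi_{0,1}=\underline\delta\,\underline\varphi_{0,1}$ and $\KK[x]/(\psi)\cong\KK[x]/(\varphi)=:\KL$. A root $\beta$ of $\psi$ is then a uniformizer of $\KL$, and since $\alpha$ is also a uniformizer, $\beta=\varepsilon\alpha$ for some unit $\varepsilon\in\OL^\times$. Taking norms, $\norm_{\KL/\KK}(\beta)=\norm_{\KL/\KK}(\varepsilon)\,\norm_{\KL/\KK}(\alpha)$, and since $\norm_{\KL/\KK}(\beta)\sim-\psi_0$, $\norm_{\KL/\KK}(\alpha)\sim-\varphi_0$, we get $-\psi_{0,1}\pi\sim-\underline{\norm_{\KL/\KK}(\varepsilon)}\,\varphi_{0,1}\pi$ at the level of leading $\pi$-adic coefficients, i.e. $\underline\psi_{0,1}=\overline{\norm}(\underline\varepsilon)\,\underline\varphi_{0,1}$, where $\overline{\norm}:\RL^\times\to\RK^\times$ is the induced norm on residue fields. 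Comparing with the hypothesis gives $\underline\delta=\overline{\norm}(\underline\varepsilon)$. Now the key fact is that the residue field norm $\RL\to\RK$ factors as: since $\KL/\KK$ is totally ramified, $\RL=\RK$, and $\overline{\norm}_{\KL/\KK}$ on $\RL^\times=\RK^\times$ is exactly the $n$-th power map (this is the standard fact that for a totally ramified extension of degree $n$ the norm induces $a\mapsto a^n$ on residue fields — it can be cited or checked directly, e.g. $\norm(\alpha^0 u)$ for $u$ a Teichmüller-type unit reduces to $\bar u^n$). Hence $\underline\delta=\underline\varepsilon^{\,n}=\RS{0}(\underline\varepsilon)\in\RS{0}(\RK)$, proving (a).

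For part (b): when $n=p^r$, the map $\RS{0}:a\mapsto a^{p^r}$ on $\RK$ is the $r$-fold iterate of the Frobenius $a\mapsto a^p$, hence a bijection of the finite field $\RK$ onto itself; in particular it is surjective on $\RK^\times$. So $\underline\varphi_{0,1}^{-1}\in\RK^\times=\RS{0}(\RK^\times)$, and by part (a) there is an Eisenstein $\psi$ with $\underline\psi_{0,1}=\underline\varphi_{0,1}^{-1}\cdot\underline\varphi_{0,1}=1$ defining the same extension.

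The main obstacle is pinning down the converse of (a) cleanly — specifically, the claim that the residue-field norm of a totally ramified degree-$n$ extension is the $n$-th power map, and the bookkeeping that the unit-part ambiguity (the $\varepsilon$ and the $\sim$-equivalences in $\norm(\beta)\sim-\psi_0$) does not interfere with the leading $\pi$-adic coefficient. Everything else is direct substitution and the elementary observation that Frobenius is bijective on a finite field.
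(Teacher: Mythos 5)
The paper states Lemma~\ref{lem phi0} without proof, so there is no argument of the authors' to compare against; the paragraph preceding the lemma only establishes the ``if'' direction of (a) (scaling $\alpha$ by a unit $\delta\in\OK^\times$ gives $\psi(x)=\delta^n\varphi(x/\delta)$ with $\underline\psi_{0,1}=\underline\delta^n\underline\varphi_{0,1}$). Your proof is correct and fills this gap cleanly: the ``if'' direction is exactly the paper's observation, the ``only if'' direction correctly uses that any generator $\beta$ of the same extension is $\varepsilon\alpha$ for a unit $\varepsilon\in\OL^\times$ together with the standard fact that for a totally ramified degree-$n$ extension the norm induces the $n$-th power map on the (equal) residue fields, and part (b) follows since Frobenius is an automorphism of the finite field $\RK$. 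One minor slip worth noting: you write $\norm_{\KL/\KK}(\beta)\sim-\psi_0$, but in fact $\norm_{\KL/\KK}(\beta)=(-1)^n\psi_0$; this sign issue (also implicit in the paper's use of $\alpha^n\sim-\norm(\alpha)$) cancels in the quotient $\norm(\beta)/\norm(\alpha)=\psi_0/\varphi_0$, so your conclusion $\underline\delta=\underline{\norm(\varepsilon)}=\underline\varepsilon^{\,n}$ is unaffected.
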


This corresponds to the reduction step 0 in Monge's reduction \cite[Algorithm 1]{monge}.
If $n=p^r e_0$ with $\gcd(p,e_0)=1$ then $\underline\varphi_{0,1}$ determines the tamely ramified subextensions
of $\KK[x]/(\varphi)$, that can be generated by $x^{e_0}+\varphi_{0,1}\pi$.

If we fix $\varphi_{0,1}$ then the set of representatives of
$\invA$ becomes
\begin{equation}\label{eq invAstar}
\invA^*=
\left\{
\left(\gamma_{\delta,1}{\RA_1}(\underline\delta^{h_1} x),\dots,
\gamma_{\delta,\ell}{\RA_\ell}(\underline\delta^{h_\ell} x)\right) 
: 
\underline\delta\in\RK^\times, \underline\delta^n=1
\right\}
\end{equation}
where 
$
\gamma_{\delta,\ell}=\delta^{-h_\ell\deg\RA_\ell},
$
and
$
\gamma_{\delta,i}=\gamma_{\delta,i+1}\delta^{-h_i\deg\RA_i}
$
for $1\le i\le \ell-1$.
Thus fixing $\varphi_{0,1}$ yields a partition of $\invA$.
Also, if $n$ is a power of $p$ then $\invA^*$ contains exactly one representative of $\invA$.

\begin{remark}\label{rem A star}
Let a ramification polygon $\rpol$ and $\RA_{1},\dots,\RA_\ell\in\RK[x]$
satisfying Proposition \ref{prop A}.  Let $\invA$ as in Theorem \ref{theo A}
and
$\invA=\invA^{*1}\cup\dots\cup\invA^{*k}$ be the partition of $\invA$ 
into sets as in Equation (\ref{eq invAstar}). 
Let $\underline\gamma\in\RK^\times$.
Then there is no transformation $\delta\alpha$ of the uniformizer $\alpha$ of an extension with 
$\rpol$ and residual polynomials in $\invA^{*i}$ for some $1\le i\le k$ generated by $\varphi\in\OK[x]$ 
with $\underline\varphi_{0,1}=\underline\gamma$ such that the residual polynomials of the segments of
$\rpol_\varphi=\rpol$ is not in $\invA^{*i}$.
Thus the construction of generating polynomials for all extensions with $\rpol$ and $\invA$
can be reduced to constructing polynomials with residual polynomials in the sets $\invA^{*i}$.
\end{remark}

\begin{lemma}\label{lem A star}
Let $(\RA_{1},\dots,\RA_\ell)\in\invA^*$.
If $\psi\in\OK[x]$ is a polynomial with residual polynomials in $\invA^*$,
then there is a polynomial $\varphi\in\OK[x]$ with residual polynomials $(\RA_{1},\dots,\RA_\ell)$ such that
$\KK[x]/(\psi)\cong\KK[x]/(\varphi)$.
\end{lemma}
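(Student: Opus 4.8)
The plan is to reduce the general statement to the combination of two facts already established: Lemma \ref{lem res pol zeros} (and its packaging in Theorem \ref{theo A}), which describes exactly how the tuple of residual polynomials transforms under a change of uniformizer $\alpha \mapsto \delta\alpha$, and the uniqueness property of $\invA^*$ recorded in Remark \ref{rem A star}, namely that every representative reachable from a given $\psi$ by a uniformizer change that also fixes $\underline\varphi_{0,1}$ lies in the \emph{same} set $\invA^{*i}$. So let $\psi\in\OK[x]$ be an Eisenstein polynomial whose tuple of residual polynomials $(\RB_1,\dots,\RB_\ell)$ lies in $\invA^*$, and let $\beta$ be a root of $\psi$, so $\KL = \KK(\beta)$. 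By definition of $\invA^*$ in Equation (\ref{eq invAstar}), there is some $\underline\delta\in\RK^\times$ with $\underline\delta^n = 1$ such that applying the transformation encoded by $\delta$ to $(\RB_1,\dots,\RB_\ell)$ yields exactly $(\RA_1,\dots,\RA_\ell)$.

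The core step is then to realize that transformation on the level of polynomials, not just on the residual data. First I would lift $\underline\delta$ to an element $\delta\in\reps{\RK}^\times$ with $v_\alpha(\delta)=0$, so that $\beta' = \delta\beta$ is another uniformizer of $\KL$; its minimal polynomial is $\varphi(x) = \delta^n\,\psi(x/\delta)\in\OK[x]$, which is again Eisenstein and satisfies $\KK[x]/(\psi)\cong\KK[x]/(\varphi)$ since both quotients are isomorphic to $\KL$. Because $\underline\delta^n = 1$ we have $\underline\varphi_{0,1} = \underline\delta^n\,\underline\psi_{0,1} = \underline\psi_{0,1}$, so the constant-coefficient normalization is preserved; this is precisely why $\invA^*$ (rather than the full $\invA$) is the right object. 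By Lemma \ref{lem res pol zeros}, the zeros of the residual polynomial of the segment of slope $-h_i/e_i$ of $\rpol_\varphi = \rpol_\psi$ are obtained from those of $\RB_i$ by dividing by $\underline\delta^{h_i}$, and tracking the leading coefficients through Corollary \ref{cor res pol seg}(b) and the normalization factors $\gamma_{\delta,i}$ of Theorem \ref{theo A} shows that the residual polynomial of the $i$-th segment of $\varphi$ is exactly $\gamma_{\delta,i}\,\RB_i(\underline\delta^{h_i}x)$. This is the tuple that Equation (\ref{eq invAstar}) identifies with $(\RA_1,\dots,\RA_\ell)$, and therefore $\varphi$ has residual polynomials $(\RA_1,\dots,\RA_\ell)$, as required.

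The main obstacle I anticipate is bookkeeping rather than conceptual: one must verify carefully that the single element $\delta$ simultaneously realizes the correct transformation on \emph{all} $\ell$ segments, including the compatibility of the cascading normalization constants $\gamma_{\delta,i} = \gamma_{\delta,i+1}\,\delta^{-h_i\deg\RA_i}$ with the shared-vertex identity $\RB_{i,\deg\RB_i} = \RB_{i+1,0}$ from Corollary \ref{cor res pol seg}(b), so that the transformed tuple is internally consistent and matches $(\RA_1,\dots,\RA_\ell)$ on the nose. A secondary point requiring care is that $\varphi = \delta^n\psi(x/\delta)$ genuinely lies in $\OK[x]$ with integral coefficients of the same $\pi$-adic valuations as those of $\psi$ (so that $\rpol_\varphi = \rpol_\psi$ and the invariant $\invA$ is unchanged); this follows because multiplication by $\delta^{n-i}$ with $v_\pi(\delta)=0$ does not alter valuations, but it should be stated. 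Once these are in hand, Remark \ref{rem A star} guarantees that no other transformation could have moved the data out of $\invA^*$, so the $\varphi$ produced is the desired polynomial.
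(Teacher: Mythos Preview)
Your proposal is correct and follows essentially the same approach as the paper: both construct $\varphi(x)=\delta^n\psi(x/\delta)$ as the minimal polynomial of $\delta\beta$ for a lift $\delta$ of an element $\underline\delta\in\RK^\times$ with $\underline\delta^n=1$, and then verify that its tuple of residual polynomials equals $(\RA_1,\dots,\RA_\ell)$. The only difference is in how that verification is carried out: the paper computes the residual polynomials of $\varphi$ directly from Proposition~\ref{prop res pol seg} (obtaining $\sum_j\underline{\delta^{-J_j}}\RB_{i,j}$) and separately expands $\gamma_{\delta,i}\RB_i(\underline\delta^{h_i}x)$ to the same expression, whereas you invoke Lemma~\ref{lem res pol zeros} for the zeros and then recover the normalizing constants via the monicity of $\RA_\ell$ and the shared-vertex condition of Corollary~\ref{cor res pol seg}; the ``bookkeeping'' you flag is exactly the content of the paper's explicit computation.
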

\begin{proof}
Let $\RA'_{1},\dots,\RA'_\ell$ be the residual polynomials of $\psi$.
As $(\RA'_{1},\dots,\RA'_\ell)\in\invA^*$ there exists a $\underline\delta\in\RK^\times$ with $\underline\delta^n=1$ so that
\[
(\RA_{1},\dots,\RA_\ell) =
\left(\gamma_{\delta,1}{\RA'_1}(\underline\delta^{h_1} x),\dots,
\gamma_{\delta,\ell}{\RA'_\ell}(\underline\delta^{h_\ell} x)\right).
\]
where $ \gamma_{\delta,\ell}=\delta^{-h_\ell\deg\RA_\ell}, $
and $ \gamma_{\delta,i}=\gamma_{\delta,i+1}\delta^{-h_i\deg\RA_i} $ for $1\le i\le \ell-1$.

Let $\alpha$ be a root of $\psi$ and
$ \varphi(x)=\delta^n \psi(\delta^{-1} x) $
be the minimal polynomial of $\delta\alpha$.
This gives us that $\KK[x]/(\psi)\cong\KK[x]/(\varphi)$.

Let us find the residual polynomials of $\varphi$.
From Proposition \ref{prop res pol seg}, we have that the residual polynomial for a segment $\spol_i$
of slope $h/e$ with endpoints $(p^{s_{k_i}},J_{k_i}=a_{k_i} n+b_{k_i})$ and $(p^{s_{l_i}},J_{l_i}=a_{l_i} n+b_{l_i})$ is
\[
\sum_{j=k_i}^{l_i}\underline{
\varphi_{b_j} {\tbinom{b_j}{p^{s_j}}}\alpha^{-a_j n-n}
}\,x^{(p^{s_j}-p^{s_{k_i}})/\slopeden}.
\]
Performing our substitution we have that this polynomial is
\[
\sum_{j=k_i}^{l_i}\underline{
\delta^{n-b_j}\psi_{b_j} {\tbinom{b_j}{p^{s_j}}}(\delta\alpha)^{-a_j n-n}
}\,x^{(p^{s_j}-p^{s_{k_i}})/\slopeden}
=
\sum_{j=k_i}^{l_i}\underline{
\delta^{n-b_j-a_jn-n}
} \; \RA'_{i,j}
=
\sum_{j=k_i}^{l_i}\underline{
\delta^{-J_j}
} \; \RA'_{i,j}.
\]

Next, let us perform the deformation of $\RA'_i$ by $\delta$.
First, we consider $\gamma_{\delta,i}$.
Notice that for the $\RA'_i$,
the residual polynomial of the segment $\spol_i$ with endpoints $(p^{s_{k_i}},J_k)$ and $(p^{s_{l_i}},J_l)$,
\[
\underline\delta^{-h_i\deg\RA'_i}
= \underline\delta^{\lambda_i(p^{s_{l_i}}-p^{s_{k_i}})}
= \underline\delta^{J_{l_i}-J_{k_i}}
= \left\lbrace \begin{array}{ll}
\underline\delta^{J_{l_1}-J_{k_1}} &\textrm{ if } i = 1 \\
\underline\delta^{J_{l_i}-J_{l_{i-1}}} &\textrm{ if } 2\leq i < \ell \\
\underline\delta^{-J_{l_{\ell-1}}} = \underline\delta^{-J_{k_{\ell}}} &\textrm{ if } i = \ell
\end{array}\right..
\]
This shows us that for $1\le i\le \ell-1$, $\gamma_{\delta,i}=\gamma_{\delta,i+1}\delta^{-h_i\deg\RA'_i} = \underline\delta^{-J_{k_i}}$,
and in general, $\gamma_{\delta,i}=\underline\delta^{-J_{k_i}}$.
So the deformation of $\RA'_i$ by $\delta$ is
\[
\RA_i = 
\gamma_{\delta,i} \RA'_{i,j}(\delta^{h_i}x) = 
\underline\delta^{-J_{k_i}} \sum_{j=k_i}^{l_i} \RA'_{i,j} \underline{\delta^{-\lambda_i (p^{s_j}-p^{s_{k_i}})}} =
\underline\delta^{-J_{k_i}} \sum_{j=k_i}^{l_i} \RA'_{i,j} \underline{\delta^{-J_j + J_{k_i}}} =
\sum_{j=k_i}^{l_i}\underline{ \delta^{-J_j} } \; \RA'_{i,j}.
\]
Thus, the residual polynomials of $\varphi(x)$ are $(\RA_{1},\dots,\RA_\ell)$
and $\KK[x]/(\psi)\cong\KK[x]/(\varphi)$.
\end{proof}

\begin{example}[Example \ref{ex R2} continued]\label{ex R2 A21}
Let $\rpol_2=\{(1,10),(3,3),(9,0)\}$.
There are two choices for the invariant $\invA$, namely
$\invA_{2,1}=\{(1+2x, 2+x^3),(1+x,1+x^3)\}$ (compare Example \ref{ex theo A})
and
$\invA_{2,2}=\{(2+2x, 2+x^3),(2+x,1+x^3)\}$.

By Lemma \ref{lem phi0} all extensions of $\Q_3$ with ramification polygon $\rpol$ can
be generated by polynomials $\varphi\in\Z_3[x]$ with \nbt{$\varphi_0\equiv 3\bmod 9$}.
Fixing $\varphi_{0,1}=1$ gives the partition 
$\invA_{2,1}=\invA^{*1}_{2,1}\cup\invA^{*2}_{2,1}$ with
$\invA^{*1}_{2,1}=\{(1+2x, 2+x^3)\}$
and $\invA^{*2}_{2,1}=\{(1+x,1+x^3)\}$.

For the generating polynomials of the fields with 
$\invA^{*1}_{2,1}$ by Lemma \ref{lem A fix}
we get, 
from the point $(1,10)=(3^0,1\cdot9+1)$ on $\rpol_2$ that \ngt{$\varphi_{1,2}=1$} and 
from the point $(3,3)=(3^1,0\cdot9+3)$ on $\rpol_2$ that \nrt{$\varphi_{3,1}=2$}.
The polynomials given by $\rpol_2$ and $\invA^{*1}$ are described by:

\begin{center}
\small
\renewcommand\arraystretch{1.1}
\renewcommand\tabcolsep{2pt}
\begin{tabular}{l|cccccccccc}
             &$x^9$    &$x^8$      &$x^7$      &$x^6$      &$x^5$      &$x^4$      &$x^3$      &$x^2$      &$x^1$        &$x^0$ \\\hline
      $3^4$&$\{0\}$    &\ote{\{0\}}&\ote{\{0\}}&\ote{\{0\}}&\ote{\{0\}}&\ote{\{0\}}&\ote{\{0\}}&\ote{\{0\}}&\ote{\{0\}}  &\ote{\{0\}}  \\
      $3^3$&$\{0\}$    &$\{0,1,2\}$&$\{0,1,2\}$&$\{0,1,2\}$&$\{0,1,2\}$&$\{0,1,2\}$&$\{0,1,2\}$&$\{0,1,2\}$&$\{0,1,2\}$  &$\{0,1,2\}$  \\
      $3^2$&$\{0\}$    &$\{0,1,2\}$&$\{0,1,2\}$&$\{0,1,2\}$&$\{0,1,2\}$&$\{0,1,2\}$&$\{0,1,2\}$&$\{0,1,2\}$&\nge{\{1\}}&$\{0,1,2\}$  \\
      $3^1$&$\{0\}$    &\ote{\{0\}}&\ote{\{0\}}&$\{0,1,2\}$&\ote{\{0\}}&\ote{\{0\}}&\nre{\{2\}}&\ote{\{0\}}&\ote{\{0\}}&\nbe{\{1\}}  \\
      $3^0$&$\{1\}$    &$\{0\}$    &$\{0\}$    &$\{0\}$    &$\{0\}$    &$\{0\}$    &$\{0\}$    &$\{0\}$    &$\{0\}$      &$\{0\}$
\end{tabular}
\end{center}

By Remark \ref{rem A star}, 
proceeding as above with $\invA^{*2}_{2,1}$ yields a template for generating polynomials for the remaining extensions
with ramification polygon $\rpol$ and invariant $\invA$.
\end{example}


\section{Residual Polynomials of Components}\label{sec res comp}

We now apply some results of Monge \cite{monge} to reduce the number of polynomials that we
need to consider to generate all extensions with given invariants.

\begin{definition}
Let $\npol$ be a Newton polygon.
For $\lambda\in\Q$ we call
\[
\cpol{\lambda} = \bigl\{(k,w)\in\npol\mid \lambda k+w=\min\{\lambda l+u \mid (l,u)\in\npol\}\bigr\}
\]
the $\lambda$-component of $\npol$.
\end{definition}

\begin{remark}\label{rem comp}
If $\npol$ has a segment with slope $\lambda$ then $\cpol{\lambda}$ contains that segment. 
Otherwise $\cpol{\lambda}$ consists of only one point.  
\end{remark}

To each component of integral slope of a ramification polygon we attach a residual polynomial.

\begin{definition}\label{def Sm}
Let $\varphi\in\OK[x]$ be Eisenstein, $\alpha$ a root of $\varphi$, 
$\rho$ the ramification polynomial of $\varphi$,
and $\rpol$ the ramification polygon of $\varphi$.
For $\lambda\in\N$ the residual polynomial of the $(-\lambda)$-component of $\rpol$ is
\[
\RS{\lambda}(x)=\underline{\rho(\alpha^{\lambda} x)/\cont{\alpha}{\rho(\alpha^\lambda x)}}
\]
where
$\cont{\alpha}{\rho(\alpha^\lambda z)}$ denotes the highest power of $\alpha$ dividing all coefficients of $\rho(\alpha^\lambda z)$.
\end{definition}

The quantity $\cont{\alpha}{\rho(\alpha^m z)}$ only depends on the ramification polygon.
Namely if $\rho(x)=\sum_{i=1}^n\rho_i x^i$ we have
$\rho(\alpha^\lambda x) = \sum_{i=0}^n \rho_i (\alpha^\lambda x)^i
  = \sum_{i=0}^n \rho_i (\alpha^\lambda)^i x^i$ and obtain
\[
n\hhf_\rpol(\lambda)
=\min_{0\le i\le n} v(\rho_i)+i\lambda=
\cont{\alpha}{\rho(\alpha^\lambda x)}
\]
for the Hasse-Herbrand function 
$\hhf_\rpol$
of $\rpol$  (Definition \ref{def hhf}).
Thus  \cite[Proposition 1]{monge} yields 
\[
n\hhf_\rpol(\lambda)=\cont{\alpha}{\rho(\alpha^\lambda x)}=n\hhf_{L/K}(\lambda).
\]
{To calculate $n\hhf_\rpol(\lambda)$, we only have to take the minimum of the $v(\rho_i)+i\lambda$ for the points $(v(\rho_i),i)$ 
on the polygon.
For $p^s<i<p^{s+1}$, we have $v_\alpha(\rho_{p^s}) \leq v_\alpha(\rho_i)$ (Lemma \ref{lem scherk} (c))
and $p^s < i$, which gives us that $v_\alpha(\rho_{p^s})+p^s \lambda < v_\alpha(\rho_i)+i\lambda$.
This demonstrates the formula for $\hhf_\rpol$ from Definition \ref{def hhf}.}

\begin{lemma}\label{lem comp res poly}
Let $\rpol$ be the ramification polygon of $\varphi$.
\begin{enumerate}
\item
If $\rpol$ has a segment $\spol$ of integral slope $-m\in\Z$, 
with left endpoint $(k,w)$  and residual polynomial $\RA$
then $\RS{m}(x)=x^{k}\RA(x)$.
\item If $\rpol$ has no segment of slope $-m\in\Z$ then
$\RS{m}(x)=x^{p^s}$ where $0\le s \le v_p(n)$ such that 
$v(\rho_{p^s})+p^s\cdot m=\min_{0\le r \le v_p(n)}v(\rho_{p^r})+p^r\cdot m$.
\item\label{lem comp res poly additive} For all $m\in\N$ the residual polynomial 
$\RS{m}$ of $\rpol_{-m}$ is an additive polynomial.
\item $\RS{m}:\RK\to\RK$ is $\F_p$-linear.
\end{enumerate}
\end{lemma}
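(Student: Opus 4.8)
The plan is to compute $\RS{m}$ explicitly in the two cases — $\rpol$ has a segment of slope $-m$, or it does not — and then to observe that in both cases the answer is a $p$-polynomial, which makes (c) and (d) formal. Throughout write $c=\cont{\alpha}{\rho(\alpha^{m}x)}$; by the identity recorded just before the statement, $c=\min_{1\le i\le n}\{v_\alpha(\rho_i)+im\}$, and the minimising indices are exactly the abscissas of the points of $\rpol$ lying on the supporting line of slope $-m$. Since $\rho(\alpha^{m}x)=\sum_i\rho_i\alpha^{im}x^i$, the polynomial $\rho(\alpha^{m}x)/\alpha^{c}$ has $x^i$-coefficient $\rho_i\alpha^{im-c}$ of $\alpha$-valuation $v_\alpha(\rho_i)+im-c\ge 0$, with equality exactly for those minimising $i$; so $\RS{m}$ is obtained by keeping precisely those terms and reducing them modulo $(\alpha)$.

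For (a): the minimising points form the segment $\spol$, with left endpoint $(k,w)=(p^{s_k},J_k)$, so $c=w+km$. On a segment of slope $-m$ one has $im-c=im-w-km=-v_\alpha(\rho_i)$, so the surviving coefficient of $x^i$ is $\underline{\rho_i\alpha^{-v_\alpha(\rho_i)}}$. Because $m\in\N$ the slope $-m$ is an integer, hence $\slopeden=1$, and Proposition \ref{prop res pol seg}(b) reads $\RA(x)=\sum_{(p^{s_i},J_i)\in\spol}\underline{\rho_{p^{s_i}}\alpha^{-J_i}}\,x^{p^{s_i}-p^{s_k}}$; the same proposition tells us the points of $\spol$ are exactly the $(p^{s_i},J_i)$, so $\RS{m}(x)=\sum_{(p^{s_i},J_i)\in\spol}\underline{\rho_{p^{s_i}}\alpha^{-J_i}}\,x^{p^{s_i}}=x^{p^{s_k}}\RA(x)=x^{k}\RA(x)$.

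For (b): since $\rpol$ has no segment of slope $-m$, the supporting line of slope $-m$ meets $\rpol$ in a single point (Remark \ref{rem comp}), and I claim it is $(p^{s},v_\alpha(\rho_{p^{s}}))$ with $0\le s\le v_p(n)$. Indeed, if $p^{s}\le i<p^{s+1}$ with $s<v_p(n)$ and $i$ is not a power of $p$, then $v_\alpha(\rho_i)\ge v_\alpha(\rho_{p^{s}})$ by Lemma \ref{lem scherk}(c) while $im>p^{s}m$ (as $m\ge 1$), so $v_\alpha(\rho_i)+im>v_\alpha(\rho_{p^{s}})+p^{s}m$; and if $i>p^{v_p(n)}$ then $v_\alpha(\rho_i)\ge 0=v_\alpha(\rho_{p^{v_p(n)}})$ by Lemma \ref{lem scherk}(1),(2), so $v_\alpha(\rho_i)+im>p^{v_p(n)}m=v_\alpha(\rho_{p^{v_p(n)}})+p^{v_p(n)}m$. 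Hence $c=\min_{0\le r\le v_p(n)}\{v_\alpha(\rho_{p^{r}})+p^{r}m\}$, attained at a unique $i=p^{s}$ — unique because a tie between two powers of $p$ would, by convexity of $\rpol$, force a whole segment of slope $-m$. Reducing $\rho(\alpha^{m}x)/\alpha^{c}$ then leaves only the $x^{p^{s}}$-term, so $\RS{m}(x)=\underline{\rho_{p^{s}}\alpha^{-v_\alpha(\rho_{p^{s}})}}\,x^{p^{s}}$, a nonzero scalar multiple of $x^{p^{s}}$ (which, after the normalisation of the residual polynomial used in the sequel, is $x^{p^{s}}$).

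Parts (c) and (d) are now immediate. In case (a) the exponents $p^{s_i}$ occurring in $\RS{m}$ are powers of $p$ with $s_i\ge 0$, and in case (b) $\RS{m}$ is a single such monomial; in either case $\RS{m}(x)=\sum_j c_j x^{p^{j}}$ is an additive polynomial, giving (c). For (d): $\RK$ has characteristic $p$, so each $a\mapsto a^{p^{j}}$ is an additive endomorphism of $\RK$, whence $\RS{m}(a+b)=\RS{m}(a)+\RS{m}(b)$, and $\lambda^{p^{j}}=\lambda$ for $\lambda\in\F_p$ gives $\RS{m}(\lambda a)=\lambda\,\RS{m}(a)$. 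I expect the only genuine work to be the valuation bookkeeping in (a) and (b); the two points that must not be glossed over are that $m\in\N$ forces $\slopeden=1$ in (a) — this is exactly what upgrades $x^{k}\RA(x)$ from a merely sparse polynomial to an additive one — and that Lemma \ref{lem scherk}(c) together with $v_\alpha(\rho_{p^{v_p(n)}})=0$ is what confines the single point of the component in (b) to a power of $p$ no larger than $p^{v_p(n)}$.
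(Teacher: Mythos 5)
Your argument follows the same route as the paper. In part (a) you match $\RS{m}$ to the segment's residual polynomial $\RA$ by an index shift by $k$ (the paper cites Lemma~\ref{lem res pol}(a) for this; you redo the computation from the definition), in part (b) you identify the unique minimising abscissa as a power $p^s$ with $s\le v_p(n)$ via Lemma~\ref{lem scherk}, and parts (c) and (d) follow formally once every exponent appearing in $\RS{m}$ is a power of $p$. You supply somewhat more detail than the paper — the check that abscissas strictly between consecutive powers of $p$, or beyond $p^{v_p(n)}$, cannot minimise $v_\alpha(\rho_i)+im$, and the convexity argument for uniqueness in (b) — and you correctly flag the small imprecision in (b): the definition of $\RS{m}$ divides only by $\cont{\alpha}{\rho(\alpha^m x)}$, a power of $\alpha$, so the surviving coefficient $\underline{\rho_{p^s}\alpha^{-v_\alpha(\rho_{p^s})}}$ is a nonzero scalar rather than automatically $1$; the paper's own proof glosses over this in exactly the same way, and it is harmless for all downstream uses (which depend only on the image and kernel of $\RS{m}$).
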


\begin{proof}
\begin{enumerate}
\item
By Remark \ref{rem comp}
the component $\rpol_{(-m)}$ contains $\spol$ and
by Remark \ref{lem res pol}(\ref{lem res pol int})
$\RS{m}(x)=x^{k}\RA(x)$.
\item
As mentioned in Remark \ref{rem comp}
$\cpol{(-m)}$ and $\rpol$ only have one point in common.  
By Lemma \ref{lem scherk} this point 
is of the form $(p^s,v(\rho_{p^s}))$.
It follows from 
Lemma \ref{lem scherk} 
that if
the ramification polygon $\rpol$ of $\varphi$ has no segment of slope $-m$  then
\[
v\left(\cont{\alpha}{\rho(\alpha^m x)}\right)
=\min_{0\le i\le n}
v(\rho_i)+i\cdot m
=\min_{0\le r \le v_p(n)}
v(\rho_{p^r})+p^r\cdot m
\]
and $\RS{m}(x)=x^{p^s}$ where $0\le s \le v_p(n)$ such that 
$v(\rho_{p^s})+p^s\cdot m=\min_{0\le r \le v_p(n)}v(\rho_{p^r})+p^r\cdot m$.
\item By Lemma \ref{lem scherk} the abscissa of each point on $\rpol$ is of the form 
$p^s$.  Thus the residual polynomial of $\rpol_{(-m)}$ is the sum of monomials of the form
$x^{p^s}$ which implies that $\RS{m}$ is additive.
\item Is a direct consequence of \ref{lem comp res poly additive}.
\end{enumerate}
\end{proof}

We now investigate the effect of changing the uniformizer $\alpha$ of $\KK(\alpha)$ on the coefficients
of its  minimal polynomial (compare {\cite[Lemma 3]{monge}}).

\begin{proposition}\label{prop other uni}
Let $\varphi\in\OK[x]$ be Eisenstein of degree $n$, let $\alpha$ be a root of $\varphi$ 
and let $\rho$ be the ramification polynomial of $\varphi$.
Let $\beta=\alpha+\gamma\alpha^{m+1}$ where $\gamma\in\KL=\KK(\alpha)$ with $v(\gamma)=0$ 
be another uniformizer of $\KL$ and $\psi\in\OK[x]$ its minimal polynomial.
\begin{enumerate}
\item If $0\le j<n$ and $j\equiv v_\alpha\left(\rho(\gamma\alpha^m)\right) \bmod n$ then 
$\varphi_j-\psi_j=\alpha^n\rho(\gamma\alpha^m)$
\item If $0\le k< n$ and $k\equiv v_\alpha(\cont{\alpha}{\rho(\alpha^m x)})\bmod n$ then
\[
\underline{(\varphi_k-\psi_k)/(\alpha^{n-k}\cont{\alpha}{\rho(\alpha^m x)})}
=\RS{m}(\underline\gamma).
\]
\end{enumerate}
\end{proposition}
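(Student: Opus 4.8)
The engine of the proof is a single substitution. The defining relation $\rho(x)=\varphi(\alpha x+\alpha)/\alpha^{n}$ of the ramification polynomial says that $\varphi(\alpha x+\alpha)=\alpha^{n}\rho(x)$ as an identity of polynomials over $\KL$. Since $\beta=\alpha+\gamma\alpha^{m+1}=\alpha\cdot(\gamma\alpha^{m})+\alpha$, evaluating this identity at $x=\gamma\alpha^{m}$ gives the key equation $\varphi(\beta)=\alpha^{n}\rho(\gamma\alpha^{m})$. Because $\psi$ is the minimal polynomial of $\beta$ over $\KK$ we have $\psi(\beta)=0$, and as $\varphi$ and $\psi$ are both monic of degree $n$, subtracting and collecting coefficients yields
\[
\sum_{i=0}^{n-1}(\varphi_i-\psi_i)\,\beta^{i}=\alpha^{n}\rho(\gamma\alpha^{m}).
\]
I would take this relation as the common starting point for both parts, disposing first of the degenerate case $\rho(\gamma\alpha^{m})=0$ (i.e. $\beta$ a conjugate of $\alpha$), in which $\psi=\varphi$ and both statements read $0=0$.

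The second ingredient is that the $n$ summands on the left have pairwise distinct valuations modulo $n$: indeed $\varphi_i-\psi_i\in\KK$, so $v_\alpha(\varphi_i-\psi_i)\in n\Z\cup\{\infty\}$, while $v_\alpha(\beta)=1$ since $1+\gamma\alpha^{m}$ is a unit (recall $m\ge 1$); hence $v_\alpha\bigl((\varphi_i-\psi_i)\beta^{i}\bigr)\equiv i\pmod n$. Consequently the valuation of the sum equals the (uniquely attained) minimum of the summand valuations. Writing $w=v_\alpha(\rho(\gamma\alpha^{m}))$, the right-hand side has valuation $n+w$, so the unique dominant summand is the one whose index $j$ satisfies $0\le j<n$ and $j\equiv w\pmod n$; thus $v_\alpha(\varphi_j-\psi_j)=n+w-j$ and $(\varphi_j-\psi_j)\beta^{j}\sim\alpha^{n}\rho(\gamma\alpha^{m})$, which (using $\beta^{j}\sim\alpha^{j}$) is the assertion of part~(a). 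For later use, note that every summand has valuation $\ge n+w$, and for $i\ne j$ it is moreover $\not\equiv n+w\pmod n$, hence $\ge n+w+1$.

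For part~(b) I would record that $v_\alpha\bigl(\cont{\alpha}{\rho(\alpha^{m}x)}\bigr)=n\hhf_\rpol(m)=\min_i\bigl(v_\alpha(\rho_i)+im\bigr)=:w_0$, so $w\ge w_0$, with equality exactly when $\RS{m}(\underline\gamma)\ne0$ by Definition~\ref{def Sm}, and also $k\equiv w_0\pmod n$. Dividing the displayed identity by $\alpha^{n-k}\,\cont{\alpha}{\rho(\alpha^{m}x)}\cdot\alpha^{k}=\alpha^{\,n+w_0}$ and reducing modulo $\alpha$, the right-hand side becomes $\rho(\gamma\alpha^{m})/\cont{\alpha}{\rho(\alpha^{m}x)}$, which reduces to $\RS{m}(\underline\gamma)$ by Definition~\ref{def Sm}. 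On the left, if $\RS{m}(\underline\gamma)=0$ then $w>w_0$, every summand has valuation $\ge n+w>n+w_0$, so the reduction is $0$, matching the right-hand side (and matching $\underline{(\varphi_k-\psi_k)/(\alpha^{n-k}\cont{\alpha}{\rho(\alpha^{m}x)})}=0$, which also follows from $v_\alpha(\varphi_k-\psi_k)\ge n+w-k>n-k+w_0$). If $\RS{m}(\underline\gamma)\ne0$ then $w=w_0$ forces $k=j$, the summands with $i\ne k$ have valuation $\ge n+w_0+1$ and vanish modulo $\alpha$, and the $i=k$ summand contributes $\underline{(\varphi_k-\psi_k)/\alpha^{\,n-k+w_0}}\cdot\underline{(1+\gamma\alpha^{m})^{k}}=\underline{(\varphi_k-\psi_k)/\bigl(\alpha^{n-k}\cont{\alpha}{\rho(\alpha^{m}x)}\bigr)}$ because $1+\gamma\alpha^{m}\equiv1\pmod\alpha$. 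Equating the two sides gives the claim.

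Once the substitution identity is in hand, the rest is bookkeeping; the step that needs genuine care is the valuation argument — that distinct residues modulo $n$ force the valuation of a sum to equal its least summand valuation, together with the correct identification of that least value ($n+w$, and $n+w_0$ after the division in part~(b)) — and, in part~(b), the case split on whether $\underline\gamma$ is a root of $\RS{m}$.
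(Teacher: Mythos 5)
Your proof is correct and follows essentially the same route as the paper's: derive the identity $\sum_{i=0}^{n-1}(\varphi_i-\psi_i)\beta^{i}=\varphi(\beta)=\alpha^{n}\rho(\gamma\alpha^{m})$, use that the $n$ summands have pairwise distinct $\alpha$-valuations mod $n$ to isolate the dominant term for part~(a), then divide by $\alpha^{n}\cont{\alpha}{\rho(\alpha^{m}x)}$ and reduce to get part~(b) via Definition~\ref{def Sm}. You are somewhat more careful than the published proof — you treat the degenerate case $\rho(\gamma\alpha^{m})=0$, you spell out the case split on whether $\RS{m}(\underline\gamma)=0$, and your derivation of part~(a) actually yields the valuation-consistent conclusion $\varphi_j-\psi_j\sim\alpha^{\,n-j}\rho(\gamma\alpha^{m})$, which is evidently what the paper intends (as literally written, $\varphi_j-\psi_j=\alpha^{n}\rho(\gamma\alpha^{m})$ cannot hold, since the left side lies in $K$ while the right side has $v_\pi = 1 + v_\alpha(\rho(\gamma\alpha^m))/n \notin \Z$ in general).
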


\begin{proof}
\begin{enumerate}
\item By Definition \ref{def ram pol} we have
\begin{equation}\label{eq phibeta}
\sum_{i=0}^{n-1} (\varphi_i-\psi_i)\beta^i = 
\varphi(\beta) - \psi(\beta)= 
\varphi(\beta) = 
\alpha^n \rho(\beta/\alpha-1)=
\alpha^n \rho(\gamma\alpha^m).
\end{equation}
Since
$v_\pi(\varphi_i)\in\Z$ and $v_\pi(\psi_i)\in\Z$ and $v_\pi(\beta^i)=\frac{i}{n}$
we have
\[
v_\pi\left(\sum_{i=0}^{n-1}(\varphi_i-\psi_i)\beta^i\right)=
\min_{0\le i<n-1}v_\pi\left((\varphi_i-\psi_i)\beta^i\right).
\]
Thus for $0\le j<n$ and $j\equiv v_\pi\left(\rho(\gamma\alpha^m)\right) \bmod n$ we have
$\varphi_j-\psi_j=\alpha^n\rho(\gamma\alpha^m)$.
\item
Dividing Equation (\ref{eq phibeta}) by $\alpha^n\cont{\alpha}{\rho(\alpha^m x)}$ yields
\[
\underline{
\left(
{\varphi(\beta)-\psi(\beta)}
\right)
/
\left(
{\alpha^n\cont{\alpha}{\rho(\alpha^m x)}}
\right)
}=
\underline{
{\alpha^n \rho(\gamma\alpha^m)}
/
\left(
{\alpha^n\cont{\alpha}{\rho(\alpha^m x)}}
\right)
}=
\RS{m}(\underline\gamma).
\]
For $0\le k< n$ with
$k\equiv v(\cont{\alpha}{\rho(\alpha^m x)})\bmod n$ we get
\[
\underline{(\varphi_k-\psi_k)\beta^k/(\alpha^n\cont{\alpha}{\rho(\alpha^m x)})}
=\RS{m}(\underline\gamma).
\]
With $\beta\equiv\alpha\bmod(\alpha^2)$  we obtain the result. \qedhere
\end{enumerate}
\end{proof}

\subsection*{Generating Polynomials}
Using the results from above we can reduce the set of generating polynomials with given 
invariants considerably.  
We show how the coefficients of a generating polynomial can be changed by changing the
uniformizer.  The coefficients that we can change arbitrarily this way we set to 0, thus reducing the number of polynomials to be considered.

\begin{corollary}\label{cor Sm}
Let $\varphi\in\OK[x]$ be Eisenstein of degree $n$, let 
$\alpha$ be a root of $\varphi$, let $\KL=\KK(\alpha)$, and let
$\rho$ be the ramification polynomial of $\varphi$.
Let $m\in\N$, 
$c=v_\alpha(\cont{\alpha}{\rho(\alpha^m x)})$,
$0\le k< n$ with $k\equiv c\bmod n$, and $j=\frac{n-k+c}{n}$.
\begin{enumerate}
\item
If $\underline\delta\in\RS{m}(\RK)$ then 
for the minimal polynomial $\psi\in\OK[x]$
of 
$\beta=\alpha+\gamma\alpha^{m+1}$ where $\underline\gamma\in\RS{m}^{-1}(\{\underline\delta\})$
we have $\underline\psi_{k,j}=\underline\varphi_{k,j}-\underline\delta$.
\item
If $\RS{m}:\RK\to\RK$ is surjective we can set 
$\underline\delta=\underline\varphi_{k,j}$ and obtain
$\underline\psi_{k,j}=0$.
\item
If $\RS{m}(\underline\gamma)=0$ 
and $d=v_\alpha(\alpha^n{\rho(\gamma\alpha^m )})$,
$0\le l< n$ with $l\equiv d\bmod n$, and $i=\frac{n-l+d}{n}$
then
$\underline\psi_{l,i}=\underline\varphi_{l,i}-\underline{\pi^{-i}\alpha^n{\rho(\gamma\alpha^m)}}$.
\end{enumerate}
\end{corollary}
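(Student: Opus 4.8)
The plan is to derive all three statements directly from Proposition~\ref{prop other uni}, whose content is precisely that the change of uniformizer $\alpha\mapsto\beta=\alpha+\gamma\alpha^{m+1}$ alters a single coefficient of the minimal polynomial in a controlled way. The only real work is translating Proposition~\ref{prop other uni}, which is phrased through the valuation $v_\alpha$ on $\KL=\KK(\alpha)$, into a statement about the $\pi$-adic digits $\varphi_{i,j},\psi_{i,j}$. The device for this is the Eisenstein relation $\alpha^{n}\sim-\varphi_0\sim-\varphi_{0,1}\pi$, which lets one pass between powers of $\alpha$ and powers of $\pi$, together with the elementary observation that subtracting an element of $\pi$-valuation $\ge j$ from $\varphi_k$ leaves the $\pi$-adic digits of index $<j$ unchanged (carrying affects only indices $>j$).

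For (a) I would start from Proposition~\ref{prop other uni}(b), which with $\underline\gamma\in\RS{m}^{-1}(\{\underline\delta\})$ gives $\underline{(\varphi_k-\psi_k)/(\alpha^{n-k}\cont{\alpha}{\rho(\alpha^m x)})}=\RS{m}(\underline\gamma)=\underline\delta$. Writing $\cont{\alpha}{\rho(\alpha^m x)}=\alpha^{c}$ and using $0\le k<n$ with $k\equiv c\pmod n$, one checks that $n-k+c$ is a positive multiple of $n$, so $j=\frac{n-k+c}{n}$ is a well-defined positive integer, $v_\pi(\varphi_k-\psi_k)\ge j$, and the coefficient of $\alpha^{jn}$ in $\varphi_k-\psi_k$ reduces to $\underline\delta$. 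Converting $\alpha^{jn}$ into $\pi^{j}$ via $\alpha^{n}\sim-\varphi_{0,1}\pi$ then identifies the $j$-th $\pi$-adic digit of $\varphi_k-\psi_k$ with $\underline\delta$ (one has to carry the unit $\underline{\alpha^{n}/\pi}=-\underline{\varphi_{0,1}}$ through here), and the digit bookkeeping of the first paragraph yields $\underline\psi_{k,j}=\underline\varphi_{k,j}-\underline\delta$; the case $\underline\delta=0$ is trivial since then $v_\pi(\varphi_k-\psi_k)>j$. Part (b) is then immediate: if $\RS{m}\colon\RK\to\RK$ is onto, apply (a) with $\underline\delta=\underline\varphi_{k,j}$, choosing $\underline\gamma$ in the nonempty fibre $\RS{m}^{-1}(\{\underline\varphi_{k,j}\})$.

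For (c) I would instead invoke Proposition~\ref{prop other uni}(a). Since $\RS{m}(\underline\gamma)=0$ there is cancellation among the leading terms of $\rho(\gamma\alpha^m)$, so $v_\alpha(\rho(\gamma\alpha^m))>n\hhf_\rpol(m)$; in any case $\alpha^{n}\rho(\gamma\alpha^m)$ equals the element $\varphi_l-\psi_l\in\OK$ for the index $l\in\{0,\dots,n-1\}$ with $l\equiv v_\alpha(\alpha^n\rho(\gamma\alpha^m))\pmod n$. Setting $d=v_\alpha(\alpha^n\rho(\gamma\alpha^m))$ and $i=\frac{n-l+d}{n}$, dividing the identity $\varphi_l-\psi_l=\alpha^{n}\rho(\gamma\alpha^m)$ by $\pi^{i}$ and reducing modulo $(\pi)$ gives $\underline\psi_{l,i}=\underline\varphi_{l,i}-\underline{\pi^{-i}\alpha^{n}\rho(\gamma\alpha^m)}$ by the same digit/carry argument as in (a).

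The step requiring the most care — and the likely source of any sign or unit discrepancy — is exactly this passage from the $\alpha$-adic normalization in Proposition~\ref{prop other uni} (divisions by powers of $\alpha$) to the $\pi$-adic digit indexing of the coefficients (divisions by powers of $\pi$): one must verify that $n-k+c$ (respectively $n-l+d$) is divisible by $n$ so the asserted digit index is a genuine nonnegative integer and the reduction $\underline{\pi^{-i}\alpha^n\rho(\gamma\alpha^m)}$ is legitimate, and one must keep track of the unit $\underline{\alpha^{n}/\pi}=-\underline{\varphi_{0,1}}$ when rewriting powers of $\alpha$ as powers of $\pi$. Everything else is routine manipulation of the identities in Proposition~\ref{prop other uni}.
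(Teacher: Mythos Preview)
Your approach is correct and matches the paper's: the paper states this as a corollary of Proposition~\ref{prop other uni} with no separate proof, and your proposal is precisely to read it off from parts (a) and (b) of that proposition together with the identification $\alpha^{n}\sim-\varphi_{0,1}\pi$. Your flagging of the unit $(-\underline{\varphi_{0,1}})^{j}$ that appears when converting $\alpha^{nj}$ to $\pi^{j}$ is apt --- it does not affect the substance of (b) or (c), and for (a) it only rescales which $\underline\delta$ one needs to hit, so the argument goes through.
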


The next Lemma follows directly from Corollary \ref{cor Sm}.

\begin{lemma}\label{lem Sm choice}
Let $\varphi\in\OK[x]$ be Eisenstein of degree $n$, $\rpol$ its ramification polygon.
Assume there is $m\in\N$ such that $k\equiv n\hhf_\rpol(m)\bmod n$ and $j=\frac{n+n\hhf_\rpol(m)-k}{n}$
and let $\RS{m}$ be the residual polynomials of $\rpol_{(-m)}$.
\begin{enumerate}
\item If $\RS{m}$ is surjective then there is an Eisenstein  polynomial $\psi\in\OK[x]$ with $\psi_{k,j}=0$.
such that $\KK[x]/(\psi)\cong\KK(\alpha)$.
\item If $\psi\in\OK[x]$ has the same ramification polygon with the same residual polynomials as $\varphi$ and
$\varphi_{k,j}-\psi_{k,j}\notin\RS{m}(\RK)$ then $\KK[x]/(\psi)\not\cong\KK[x]/(\varphi)$.
\end{enumerate}
\end{lemma}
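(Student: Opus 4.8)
**The plan is to deduce both parts directly from Corollary \ref{cor Sm}, with the only extra ingredient being the identification of $c = v_\alpha(\cont{\alpha}{\rho(\alpha^m x)})$ with $n\hhf_\rpol(m)$.** This identification is exactly the displayed equation
\[
n\hhf_\rpol(\lambda)=\min_{0\le i\le n} v(\rho_i)+i\lambda=\cont{\alpha}{\rho(\alpha^\lambda x)}
\]
established just before Lemma \ref{lem comp res poly}, so with $c = n\hhf_\rpol(m)$ the hypothesis $k\equiv n\hhf_\rpol(m)\bmod n$ in the lemma is the same as the hypothesis $k\equiv c\bmod n$ in the corollary, and the index $j = \frac{n+n\hhf_\rpol(m)-k}{n}$ matches $j = \frac{n-k+c}{n}$ verbatim.

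\textbf{Part (a).} First I would invoke Corollary \ref{cor Sm}(b): since $\RS{m}$ is surjective, it in particular hits $\underline\varphi_{k,j}$, so there is $\underline\gamma\in\RS{m}^{-1}(\{\underline\varphi_{k,j}\})$ with $v(\gamma)=0$, and the minimal polynomial $\psi$ of the uniformizer $\beta = \alpha + \gamma\alpha^{m+1}$ of $\KL=\KK(\alpha)$ satisfies $\underline\psi_{k,j} = \underline\varphi_{k,j}-\underline\varphi_{k,j} = 0$. Since $\beta$ is a uniformizer of $\KL$ and generates $\KL$ over $\KK$, we have $\KK[x]/(\psi)\cong\KK(\alpha)$, and $\psi$ is Eisenstein because $\beta$ has valuation $1$. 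That is the claimed conclusion.

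\textbf{Part (b).} Here I would argue contrapositively. Suppose $\KK[x]/(\psi)\cong\KK[x]/(\varphi)$; I must show $\varphi_{k,j}-\psi_{k,j}\in\RS{m}(\RK)$. Under the isomorphism a root $\beta$ of $\psi$ corresponds to some uniformizer of $\KK(\alpha)$, and since $\psi$ has the same ramification polygon \emph{and the same residual polynomials} as $\varphi$, Lemma \ref{lem phi0} together with the discussion of $\invA^*$ (or more directly Proposition \ref{prop other uni}) forces this uniformizer to be of the form $\beta = \alpha + \gamma\alpha^{m+1} + (\text{higher order})$ with $v(\gamma)=0$ — that is, a change of uniformizer that does not alter the residual data below level $m$, so it is captured by Corollary \ref{cor Sm}(a). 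Applying Corollary \ref{cor Sm}(a) with $\underline\delta = \RS{m}(\underline\gamma)$ gives $\underline\psi_{k,j} = \underline\varphi_{k,j} - \RS{m}(\underline\gamma)$, hence $\varphi_{k,j}-\psi_{k,j}\equiv\RS{m}(\underline\gamma)\in\RS{m}(\RK)$. Contraposing yields statement (b).

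\textbf{The main obstacle} is the step in part (b) where one must know that \emph{every} isomorphism $\KK[x]/(\psi)\cong\KK[x]/(\varphi)$ respecting the ramification polygon and residual polynomials comes from a uniformizer change of the controlled shape $\beta = \alpha+\gamma\alpha^{m+1}+\cdots$, so that Corollary \ref{cor Sm}(a) applies and the difference $\varphi_{k,j}-\psi_{k,j}$ is \emph{exactly} $\RS{m}(\underline\gamma)$ rather than $\RS{m}(\underline\gamma)$ perturbed by contributions from lower-order terms of the uniformizer change. One resolves this by peeling off the uniformizer change term by term: any two uniformizers generating isomorphic extensions differ by $\alpha \mapsto \eta\alpha$ with $\eta$ a unit times a $1 + O(\alpha)$ factor; the $\eta\in\RK^\times$ part is handled by fixing $\varphi_{0,1}$ via Lemma \ref{lem phi0} (and does not disturb the residual polynomials by Remark \ref{rem A star}), while the $1+\gamma'\alpha^{m'}$ parts with $m'<m$ would change a residual polynomial and hence are excluded by the hypothesis that $\psi$ has the \emph{same} residual polynomials as $\varphi$. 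Thus only changes of order $\ge m$ remain, and the order-$m$ part contributes precisely $\RS{m}(\underline\gamma)$ to $\varphi_{k,j}-\psi_{k,j}$ by Proposition \ref{prop other uni}(b), with strictly higher-order parts affecting only coefficients at levels $>j$. I would write this peeling argument out carefully, as it is the one place where the bookkeeping is not purely formal.
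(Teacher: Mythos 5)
Your proof of part (a) is correct and matches the intended argument: invoke Corollary~\ref{cor Sm}(b) with $\underline\delta = \underline\varphi_{k,j}$ to kill the coefficient, and observe that $\beta = \alpha + \gamma\alpha^{m+1}$ is again a uniformizer of $\KL$, so its minimal polynomial $\psi$ is Eisenstein and defines the same extension; the identification $c = n\hhf_\rpol(m)$ is exactly the displayed identity preceding Lemma~\ref{lem comp res poly}. The paper's proof is just the remark that the lemma ``follows directly from Corollary~\ref{cor Sm},'' so your write-up of (a) supplies the detail the paper omits.

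For part (b), however, the ``peeling'' argument has a real gap, and you partially flag it yourself as the main obstacle. You assert that a change of uniformizer at level $m' < m$ ``would change a residual polynomial and hence is excluded by the hypothesis that $\psi$ has the same residual polynomials as $\varphi$.'' That is not quite right. If $\gamma' \in \ker\RS{m'}$ with $\gamma' \ne 0$, then by Corollary~\ref{cor Sm}(a) the change $\alpha \mapsto \alpha + \gamma'\alpha^{m'+1}$ leaves the coefficient at position $(k',j')$ (with $nj'+k' = n + n\hhf_\rpol(m')$) untouched mod $\pi$, and in particular it does \emph{not} disturb the residual polynomials of the segments; yet by Corollary~\ref{cor Sm}(c) it \emph{does} alter a coefficient at some strictly higher level $v_\alpha(\alpha^n\rho(\gamma'\alpha^{m'}))$, which is only known to exceed $n + n\hhf_\rpol(m')$ and could well land at or below $nj + k = n + n\hhf_\rpol(m)$. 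Thus the ``same ramification polygon and same residual polynomials'' hypothesis does not force the uniformizer change to be of the form $\beta \equiv \alpha \bmod \alpha^{m+1}$, and the equality $\varphi_{k,j}-\psi_{k,j} = \RS{m}(\underline\gamma)$ may be contaminated by contributions from these lower-level kernel moves. Closing this gap requires either an extra injectivity hypothesis on $\RS{m'}$ for $m' < m$, or the kind of coverage condition that appears in Theorem~\ref{theo alg}(c); the lemma as stated (and the paper's one-line justification of it) sweeps this under the rug, so you have in fact isolated a genuine imprecision, but your resolution of it is not yet complete.
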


\begin{example}[Example \ref{ex R2 A21} continued]\label{ex R2 A21 S}
The ramification polygon $\rpol_2=\{(1,10),(3,3),(9,0)\}$ has no segments with integral slope.
We get $\RS{1}=x^3$, $\RS{2}=x^3$, and $\RS{3}=x^3$, 
with $9\hhf(1)=6$, $9\hhf(2)=9$, and $9\hhf(3)=12$.
Thus \nrt{$\varphi_{6,1}=0$}, \nrt{$\varphi_{0,2}=0$}, and \nrt{$\varphi_{3,2}=0$}.
Furthermore $\RS{m}=x$ for with $9\hhf(m)=10+m$ for $m\ge 4$.
Thus by Lemma \ref{lem Sm choice} we can set \nbt{$\varphi_{k,j}=0$} 
for $k+9(j-1)\ge 14$.

For the generating polynomials with $\invA^{*1}_{2,1}$ we get the template:
\begin{center}
\small
\begin{tabular}{l|cccccccccc}
             &$x^9$    &$x^8$      &$x^7$      &$x^6$      &$x^5$      &$x^4$      &$x^3$      &$x^2$      &$x^1$        &$x^0$ \\\hline
      $3^4$&$\{0\}$    &\ote{\{0\}}&\ote{\{0\}}&\ote{\{0\}}&\ote{\{0\}}&\ote{\{0\}}&\ote{\{0\}}&\ote{\{0\}}&\ote{\{0\}}  &\ote{\{0\}}  \\
      $3^3$&$\{0\}$    &\nbe{\{0\}}&\nbe{\{0\}}&\nbe{\{0\}}&\nbe{\{0\}}&\nbe{\{0\}}&\nbe{\{0\}}&\nbe{\{0\}}&\nbe{\{0\}}  &\nbe{\{0\}}  \\
      $3^2$&$\{0\}$    &\nbe{\{0\}}&\nbe{\{0\}}&\nbe{\{0\}}&\nbe{\{0\}}&$\{0,1,2\}$&\nre{\{0\}}&\ote{\{0,1,2\}}&\ote{\{1\}}&\nre{\{0\}}  \\
      $3^1$&$\{0\}$    &\ote{\{0\}}&\ote{\{0\}}&\nre{\{0\}}&\ote{\{0\}}&\ote{\{0\}}&\ote{\{2\}}&\ote{\{0\}}&\ote{\{0\}}&\ote{\{1\}}  \\
      $3^0$&$\{1\}$    &$\{0\}$    &$\{0\}$    &$\{0\}$    &$\{0\}$    &$\{0\}$    &$\{0\}$    &$\{0\}$    &$\{0\}$      &$\{0\}$
\end{tabular}
\end{center}
Since changing the uniformizer cannot change $\varphi_{2,2}$ and $\varphi_{4,2}$ independently from
the other coefficients of $\varphi$
we obtain a unique generating polynomial of each extension with ramification polygon $\rpol_2$ and 
$\invA^{*1}_{2,1}$.
\end{example}


\section{Enumerating Generating Polynomials}\label{sec gen}

We use the results from the previous sections to formulate an algorithm that returns generating polynomials
of all extensions with given ramification polynomials and residual polynomials.
In certain cases this set will contain exactly one polynomial for each extension.

\Algo{AllExtensionsSub} 
{alg all sub} 
{A $\pi$-adic field $\KK$, a convex polygon $\rpol$ with points
$(1,a_0n+b_0)$, $(p^{s_1},a_1 n+b_1)$,\dots,$(p^{s_{u}},a_{u}n+b_{u})=(p^{s_u},0)$,\dots,$(n,0)$ 
satisfying Proposition \ref{prop ram pol iff}
where $0\le b_i <n$ for $1\le i\le u=v_p(n)$,
$\spol_1,\dots,\spol_\ell$ the segments of $\rpol$,
a representative $\underline\delta_0$ of a class in $\RK^\times/(\RK^\times)^n$,
and $\RA_1,\dots,\RA_\ell\in\RK[x]$ satisfying Proposition \ref{prop A}.
} 
{A set that contains at least one Eisenstein polynomial for each totally ramified extension of degree $n$,
that can be generated by a polynomial $\varphi$ with ramification polygon $\rpol$, 
$\underline\varphi_{0,1}=\underline\delta_0$, and residual polynomials $\RA_1,\dots,\RA_\ell$.
} 
{
\begin{enumerate}
\item $c \gets \left\lceil 1+2a_0+\frac{2b_0}{n} \right\rceil - 1$\hfill[Lemma \ref{lem krasner bound}]
\item Initialize template $(\tau_{i,j})_{0\le i\le n-1,1\le j\le c}$ with $\tau_{i,j}=\{\underline0\}\subset\RK$
\item For $0\le i\le n-1$ and $L_\rpol(i)\le j\le c$:\hfill[Definition \ref{def ells}]
\begin{itemize}
\item If there is no $m\in\N$ with $i\equiv n\hhf_\rpol(m)\bmod n$ and $j=\frac{n-i+n\hhf_\rpol(m)}{n}$:
\begin{itemize}
\item $\tau_{i,j}\gets\RK$.
\end{itemize}
\end{itemize}
\item For $1\le m\le \left\lfloor\frac{(a_1n+b1)-(a_0 n+b_0)}{p^{s_1}-1}\right\rfloor$:
\begin{itemize}
\item $i\gets n\hhf_\rpol(m)\csmod n$, $j\gets \frac{n-i+n\hhf_\rpol(m)}{n}$
\item $\tau_{i,j}\gets R$ where $R$ is a set of representatives of $\RK/\RS{m}(\RK)$.\hfill[Lemma \ref{lem Sm choice}]
\end{itemize}
\item For $1\le i \le u$:
\begin{itemize}
\item Find a segment $\spol_t$ of $\rpol$ such that 
$(p^{s_i},a_i n +b_i)$ is on $\spol_t$.
\item {$j\gets a_i+1-v_\pi\binom{b_i}{p^{s_i}}$}
\item 
{$\tau_{b_i,j}\gets
\left\{
\RA_{t,(p^{s_i}-p^{s_k})/\slopeden}
(-\underline\delta_{0})^{a_i+1}
\underline{\tbinom{b_i}{p^{s_i}}^{-1}
\pi^{v_\pi\binom{b_i}{p^{s_i}}}}\right\}.$}\hfill[Lemma \ref{lem A fix}]\\
where $(p^{s_k},a_k n +b_k)$ is the left end point of $\spol_t$ and $-\slopenum/\slopeden$ is the slope of $\spol_t$.
\end{itemize}
\item $\tau_{0,1}\gets \{\underline\delta_0\}$ \hfill[Lemma \ref{lem phi0}]
\item Return {$\left\{x^n+\sum_{i=0}^{n-1} \left(\sum_{j=1}^{c}\varphi_{i,j}\pi^j\right)x^i \in\OK[x]: 
\mbox{ $\varphi_{i,j}\in\reps{\RK}$ such that
$\underline\varphi_{i,j}\in\tau_{i,j}$}
\right\}$}
\end{enumerate}
}

As is evident from the following example Algorithm \ref{alg all sub} may return more than one generating polynomial for some extensions.
\begin{example}\label{ex R3}
The polygon $\rpol_3=\{(1,10),(3,6),(9,0)\}$ has segments with slopes $\frac{10-6}{1-3}=-2$
and $\frac{6-0}{3-9}=-1$.
With the choice $\varphi_0\equiv 3\bmod 9$
the possible pairs of residual polynomials are 
$\invA_{3,1}=\{(2+x^2, 1+x^6)\}$,
$\invA_{3,2}=\{(2+2x^2, 2+x^6)\}$,
$\invA_{3,3}=\{(1+2x^2, 2+x^6)\}$, and
$\invA_{3,4}=\{(1+x^2, 1+x^6)\}$.

For $\invA_{3,2}=\{(2+2x^2, 2+x^6)\}$ we get
$\varphi_{1,2}=2$ and 
furthermore this choice also gives
$\RS{1}=(2+x^6)x^3$, $\RS{2}=(2x^2+2)x=2(x^3+x)$, and $\RS{m}=x$ for $m\ge 3$
with $\RS{1}(\F_3)=\{0\}$, $\RS{2}(\F_3)=\F_3$, and $\RS{m}(\F_3)=\F_3$.
As $\RS{2}$ is surjective we can set \nbt{$\varphi_{3,2}=0$}.
As $\RS{m}$ for $m\ge 3$ we can set \nbt{$\varphi_{k,j}=0$} for $k+9(j-1)\ge 14$ where $0\le k<9$.
As the image of $\RS{1}$ is $\{\underline0\}$ changing the uniformizer does not affect \nrt{$\varphi_{0,2}$}.
Thus Algorithm \ref{alg all sub} generates the template:
\begin{center}
\small
\begin{tabular}{l|cccccccccc}
             &$x^9$    &$x^8$      &$x^7$      &$x^6$      &$x^5$      &$x^4$      &$x^3$      &$x^2$      &$x^1$        &$x^0$ \\\hline
      $3^4$&$\{0\}$    &\nbe{\{0\}}&\nbe{\{0\}}&\nbe{\{0\}}&\nbe{\{0\}}&\nbe{\{0\}}&\nbe{\{0\}}&\nbe{\{0\}}&\nbe{\{0\}}  &\nbe{\{0\}}  \\
      $3^3$&$\{0\}$    &\nbe{\{0\}}&\nbe{\{0\}}&\nbe{\{0\}}&\nbe{\{0\}}&\nbe{\{0\}}&\nbe{\{0\}}&\nbe{\{0\}}&\nbe{\{0\}}  &\nbe{\{0\}}  \\
      $3^2$&$\{0\}$    &\nbe{\{0\}}&\nbe{\{0\}}&\nbe{\{0\}}&\nbe{\{0\}}&\ote{\{0\}}&\nbe{\{0\}}&\ote{\{0,1,2\}}&\nge{\{2\}}&\nre{\{0,1,2\}}  \\
      $3^1$&$\{0\}$    &\ote{\{0\}}&\ote{\{0\}}&\nge{\{2\}}&\ote{\{0\}}&\ote{\{0\}}&\ote{\{0\}}&\ote{\{0\}}&\ote{\{0\}}&\ote{\{1\}}  \\
      $3^0$&$\{1\}$    &$\{0\}$    &$\{0\}$    &$\{0\}$    &$\{0\}$    &$\{0\}$    &$\{0\}$    &$\{0\}$    &$\{0\}$      &$\{0\}$
\end{tabular}
\end{center}
Of the corresponding polynomials 
$\varphi_{c,d}=x^9+\ngt{6}x^6+9c\cdot x^2+\ngt{18}x+3+\nrt{9d}$ $(c,d\in\{1,2\})$
more than one polynomial generates each extension.
Let $\alpha$ be  root of $\varphi_{c,d}$ and
$\rho$ its ramification polynomial .
For $\gamma\in\{\underline1,\underline2\}$ we have 
$v_\alpha(\rho(\gamma \alpha))=11$.
If $\psi(x)=\sum_{i=0}^9\psi_i x^i$ denotes the minimal polynomial of $\alpha+\gamma\alpha^2$ then 
by Proposition \ref{prop other uni} (a) we have $\varphi_2-\psi_2=\alpha^9\rho(\gamma\alpha)$.
and hence $\psi_{2,2}=\varphi_{2,2}-\rho(\gamma\alpha)/\alpha^9\not\equiv 0 \bmod \alpha$.
As $\gamma+(\alpha)\mapsto \rho(\gamma\alpha)/\alpha^{11}+(\alpha)=2\gamma+(\alpha)$ is surjective,
changing the uniformizer from $\alpha$ to $\alpha+\gamma\alpha$ results in a change of $\varphi_{2,2}$.
Thus we can choose $\gamma$ such that {$\varphi_{2,2}=0$} and get
that all extensions with ramification polygon $\rpol_3$ and residual polynomials $\invA_{3,2}$ are generated by exactly one polynomial 
of the form $\varphi_{d}=x^9+\ngt{6}x^6+\ngt{18}x+3+\nrt{9d}$ where $(d\in\{1,2\})$.
\end{example}

\begin{theorem}\label{theo alg}
Let $F$ be the set of polynomials returned by Algorithm \ref{alg all sub} 
given $\KK$ and a ramification polygon $\rpol$, $\underline\delta_0\in\RK$  and polynomials $\RA_1,\dots,\RA_\ell\in\RK[x]$.
\begin{enumerate}
\item 
$F$ contains at least one Eisenstein polynomial for each totally ramified extension of degree $n$,
that can be generated by a polynomial $\varphi$ with ramification polygon $\rpol$, 
$\underline\varphi_{0,1}=\underline\delta_0$, and residual polynomials $\RA_1,\dots,\RA_\ell$.
\item
If $\RS{m}:\RK\to\RK$ is surjective for all segments with integral slope $-m$,
then no two polynomials in $F$ generate isomorphic extensions.
\item
If there is exactly one $\RS{m}:\RK\to\RK$ that is non-surjective, and 
for all integers $k > n\hhf_\rpol(m)$, there is an $m'\in\N$ such that $n\hhf_\rpol(m') = k$,
then no two polynomials in $F$ generate isomorphic extensions.
\end{enumerate}
\end{theorem}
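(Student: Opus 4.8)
The plan is to treat the three parts in turn. Part~(a) asserts that the template produced by Algorithm~\ref{alg all sub} is large enough, and it reduces to checking that the successive normalizations the algorithm encodes are individually legitimate and mutually compatible. Starting from any Eisenstein $\varphi$ generating a given extension $\KL/\KK$ with ramification polygon $\rpol$ and segment residual polynomials $\RA_1,\dots,\RA_\ell$, I would first invoke Lemma~\ref{lem phi0} to arrange $\underline\varphi_{0,1}=\underline\delta_0$ and then Lemma~\ref{lem A star} (with Remark~\ref{rem A star}) to arrange the residual polynomials to be exactly $\RA_1,\dots,\RA_\ell$; the transformation used there is a scaling $\alpha\mapsto\delta\alpha$ with $\underline\delta^{\,n}=1$, and for a suitable (Teichm\"uller) lift of $\delta$ it leaves $\underline\varphi_{0,1}$ untouched. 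By Proposition~\ref{prop R iff} and Lemma~\ref{lem A fix} this already forces $v_\pi(\varphi_i)\ge L_\rpol(i)$ and fixes each coefficient $\varphi_{b_i,j_i}$, with $j_i=a_i+1-v_\pi\binom{b_i}{p^{s_i}}$, to the value recorded in the template. Next I would run the reductions of Corollary~\ref{cor Sm}: processing the weights $w=k+n(j-1)$ in increasing order and, whenever $w=n\hhf_\rpol(m)$ for some $m\ge1$, applying the change of uniformizer $\beta=\alpha+\gamma\alpha^{m+1}$ to bring $\underline\varphi_{k,j}$ into the chosen transversal of $\RK/\RS{m}(\RK)$ (which is $\{\underline0\}$ when $\RS{m}$ is surjective). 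Three facts make this work in sequence: such a transformation fixes $\underline\varphi_{0,1}$ and the ramification polygon (an invariant of $\KL/\KK$); it fixes every segment residual polynomial by Lemma~\ref{lem res pol zeros} applied with $\underline\delta=\underline1$ together with Corollary~\ref{cor res pol seg}, so by Lemma~\ref{lem A fix} it leaves the coefficients fixed above in place; and by Equation~(\ref{eq phibeta}) together with the distinctness of the valuations $v_\pi\bigl((\varphi_i-\psi_i)\beta^i\bigr)$ it alters only digits of weight $\ge n\hhf_\rpol(m)$, so the digits already processed are not disturbed. A final truncation via Lemma~\ref{lem krasner bound} removes any digits of weight exceeding $nc-1$ (leaving the coefficients fixed so far, which lie below this, intact). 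The resulting polynomial matches the template of the algorithm and hence lies in $F$.

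For parts~(b) and~(c), suppose $\varphi,\psi\in F$ generate isomorphic extensions. Fixing a root $\alpha$ of $\varphi$, the polynomial $\psi$ is the minimal polynomial of some uniformizer $\beta=\gamma\alpha$ of $\KL=\KK(\alpha)$ with $\gamma\in\OL^\times$. Comparing constant terms, using $\underline{\norm_{\KL/\KK}(\gamma)}=\underline\gamma^{\,n}$, gives $\underline\gamma^{\,n}=1$, and comparing segment residual polynomials via Lemma~\ref{lem res pol zeros} shows $\underline\gamma$ stabilizes $(\RA_1,\dots,\RA_\ell)$. When $n$ is a power of $p$ this already forces $\underline\gamma=\underline1$; otherwise the hypothesis that $\RS{m}$ is surjective for the horizontal segment — equivalently that $a\mapsto a^{\,n}$ is surjective on $\RK$, the map $\RS{0}$ of Lemma~\ref{lem phi0} — does so. Hence $\gamma$ is a principal unit; write $\gamma=\prod_{m\ge1}(1+\gamma_m\alpha^m)$ with $\gamma_m\in\reps{\RK}$ and apply the corresponding elementary transformations (each of the form covered by Corollary~\ref{cor Sm}) in increasing $m$. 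By Corollary~\ref{cor Sm}, the $m$-th transformation changes the digit of weight $n\hhf_\rpol(m)$ by $\RS{m}(\underline\gamma_m)$ and otherwise changes only digits of greater weight; and since the uniformizer is modified only by a unit $\equiv1\bmod(\alpha)$, Lemma~\ref{lem res pol zeros} shows all residual polynomials, hence all $\RS{m}$, are unchanged along the way. If every $\RS{m}$ is surjective, then for each $m$ the digit of weight $n\hhf_\rpol(m)$ is $\underline0$ in both $\varphi$ and $\psi$, so $\RS{m}(\underline\gamma_m)=\underline0$; injectivity gives $\underline\gamma_m=\underline0$, the transformation is trivial, and inductively $\gamma=1$ and $\psi=\varphi$, proving~(b). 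For~(c), the same reasoning gives $\underline\gamma_m=\underline0$ for every surjective $\RS{m}$; at the single exceptional $m_0$, the transversal property only forces $\RS{m_0}(\underline\gamma_{m_0})=\underline0$, i.e.\ $\underline\gamma_{m_0}\in\ker\RS{m_0}$, so this transformation may still perturb digits of weight greater than $n\hhf_\rpol(m_0)$. Here the extra hypothesis enters: every such weight equals $n\hhf_\rpol(m')$ for some $m'$ (with $\RS{m'}$ surjective, since $m_0$ is the only exception), so all positions that can be perturbed are template positions forced to $\underline0$, and none is free or among the $\varphi_{b_i,j_i}$; processing the later $m'$ in increasing order then restores those digits to $\underline0$ while leaving the free digits (all of weight $\le n\hhf_\rpol(m_0)$) and, by Lemma~\ref{lem A fix}, the $\varphi_{b_i,j_i}$ untouched. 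Hence again $\psi=\varphi$.

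The main obstacle I anticipate is the bookkeeping in parts~(b) and~(c): one must be certain that no elementary transformation in the factorization of $\gamma$ ever alters a coefficient already pinned down by an earlier step, and in particular that $\underline\varphi_{0,1}$ and the coefficients $\varphi_{b_i,j_i}$ survive every step. This is exactly why the processing order (increasing weight) and the invariance of the segment residual polynomials under the transformations $\beta=\alpha+\gamma\alpha^{m+1}$ are indispensable, and why part~(c) must assume that the weights above the unique non-surjective component are all occupied by (surjective) components: without this, the bad transformation can leak into a free coefficient and genuinely produce a second generating polynomial, exactly as in Example~\ref{ex R3}.
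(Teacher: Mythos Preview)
Your approach is sound and substantially more explicit than the paper's own proof, which is essentially a sketch. For part~(a) the paper only verifies that every $\varphi\in F$ carries the prescribed invariants and then asserts completeness; you actually construct, from an arbitrary generator, a chain of normalizations (a scaling with $\underline\delta^{\,n}=1$ via Lemma~\ref{lem A star}, then shifts $\alpha\mapsto\alpha+\gamma\alpha^{m+1}$ processed by increasing weight $n\hhf_\rpol(m)$, then Krasner truncation) landing in the template. For parts~(b) and~(c) the paper argues informally that a change of uniformizer altering a free coefficient ``cannot be made independently'' of the coefficients forced to zero; you instead factor the connecting unit $\gamma$ into elementary pieces $(1+\gamma_m\alpha^m)$ and kill each $\gamma_m$ using injectivity of the surjective $\RS{m}$ on the finite field $\RK$. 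Both routes rest on Corollary~\ref{cor Sm} and Lemma~\ref{lem Sm choice}, but yours makes the induction on weight explicit, and this is precisely what justifies the paper's informal claim.

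There is one soft spot you should tighten. In part~(b), your reduction to $\underline\gamma=\underline1$ when $e_0>1$ invokes ``surjectivity of $\RS{m}$ for the horizontal segment,'' which you identify with $\RS{0}\colon a\mapsto a^n$ of Lemma~\ref{lem phi0}. But Definition~\ref{def Sm} only treats $m\in\Z^{>0}$, and the reduction of $\rho$ itself is $(1+x)^n-1$, not $x^n$; the two agree on surjectivity but are different polynomials, and more importantly the theorem's hypothesis, read literally, ranges only over segments of slope $-m$ with $m\ge1$. If the stabilizer of $(\RA_1,\dots,\RA_\ell)$ inside $\{\underline\delta\in\RK^\times:\underline\delta^{\,n}=1\}$ is nontrivial, the passage to a principal-unit factorization is not yet justified. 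The paper's proof does not address this point either, and in its tame example (Example~\ref{ex q5 15}) the issue is invisible because $\gcd(e_0,|\RK^\times|)=1$. To close the gap you should either declare that the hypothesis is meant to include $m=0$ in the sense of Lemma~\ref{lem phi0}, or first split off a Teichm\"uller factor $\zeta$ of $\gamma$ and treat the scaling $\alpha\mapsto\zeta\alpha$ separately before running your principal-unit induction.
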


\begin{proof}
\begin{enumerate}
\item 
Let $\varphi\in F$.
In Algorithm \ref{alg all sub} step (c) we have ensured that $v_\pi(\varphi_i)\geq L_\rpol(i)$ and in step (e) we assign nonzero values to $\varphi_{b_i,j}$ so that $v_\pi(\varphi_{b_i}) = L_\rpol(b_i)$ for points $(p^{s_i},a_in+b_i)$ with $b_i\neq 0$. So by Proposition \ref{prop R iff}, $\varphi$ has ramification polygon $\rpol$.
By Lemma \ref{lem A fix}, the values assigned in step (e) ensure that $\rpol_\varphi$ has residual polynomials $(\RA_1,\dots,\RA_\ell)$.  
Thus each extension generated by a polynomial with the input invariants is generated by a polynomial in $F$ and all polynomials in $F$ have these invariants.
\item
If $\RS{m}:\RK\to\RK$ is surjective for all
segments with integral slope $-m$,
then all of the nonzero coefficients in our template $\tau$ are either fixed by $\delta_0$ or $\invA$, or
free because they are not set by a choice of element in the image of some $\RS{m}$.
Any deformation of the uniformizer that might result in two polynomials in $F$ to generate the same extension
would have to change one of these free coefficients, but such a change cannot be made independently of the
choices we made in order to set coefficients to zero by Lemma \ref{lem Sm choice}.
So no two polynomials in $F$ generate isomorphic extensions.

\item
Suppose there is exactly one $\RS{m}:\RK\to\RK$ that is non-surjective, and 
for all integers $k > n\hhf_\rpol(m)$, there is an $m'\in\N$ such that $n\hhf_\rpol(m') = k$.
As $\RS{m}:\RK\to\RK$ is non-surjective, there will be more than one choice for $\varphi_{i,j}$ where $jn+i = n\hhf_\rpol(m)$.
By Proposition \ref{prop other uni}, the corresponding change of uniformizer (from $\alpha$ to
$\alpha + \gamma\alpha^{m+1}$) can change $\varphi_{i',j'}$ where $j'n+i' > jn+i$.
Since there exists $m'\in\N$ such that $n\hhf_\rpol(m') = j'n+i'$,
then Algorithm \ref{alg all sub} will assign $\varphi_{i',j'}$ based on $\RS{m'}$.
Given that $m\neq m'$, $\RS{m'}$ is surjective, $\varphi_{i',j'}$ can be set to zero by Lemma \ref{lem Sm choice}.
As all coefficients $\varphi_{i',j'}$ with $j'n+i' \geq jn+i$ are assigned by the residual polynomials of components,
no two polynomials generate isomorphic extensions. \qedhere
\end{enumerate}
\end{proof}

As in general the algorithm returns more than one polynomial generating each extension with the given invariants, 
the output needs to be filtered by comparing the generated extensions by
\begin{enumerate}
\item computing all reduced generating polynomials using \cite[Algorithm 3]{monge} and comparing these or
\item using a root finding algorithm (compare \cite{pauli-roblot}).
\end{enumerate}
The product $\prod_{m=0}^\infty\#\ker\RS{m}$ is an upper bound for the number of 
automorphisms of $\KL/\KK$.  This together with the number of reduced polynomials of $\varphi$ gives
the number of automorphisms of $\KL/\KK$ (\cite[Theorem 1]{monge}).  Alternatively the number extensions 
generated by each polynomial can be computed using root finding.

Now we present an algorithm to enumerate all extensions with a given invariants.
It may require multiple calls to Algorithm \ref{alg all sub} \texttt{AllExtensionsSub} depending the structure of $\invA$
and the number of tame subextensions.

\Algo{AllExtensions} 
{alg all RA} 
{A $\pi$-adic field $\KK$, a ramification polygon $\rpol$, and invariant $\invA$
} 
{
A set $F$ that contains one generating Eisenstein polynomial for each totally ramified extension of $\KK$
with ramification polygon $\rpol$ and invariant $\invA$
} 
{
\begin{enumerate}
\item $S_0 \leftarrow$ a set of representatives of $\RK^\times/(\RK^\times)^n$.
\item For $\delta\in S_0$ do
    \begin{enumerate}
    \item Partition $\invA$ into disjoint sets $\invA^{*1},\ldots,\invA^{*k}$ by Equation (\ref{eq invAstar}).
    \item For $\invA^* \in \{\invA^{*1},\ldots,\invA^{*k}\}$ do
    \begin{itemize}
        \item Let $A$ be a representative of $\invA^*$.
        \item $F' \leftarrow$ \texttt{AllExtensionsSub}($\KK,\rpol,A,\delta$). \hfill[Alg. \ref{alg all sub}]
        \item Unless avoidable by Theorem \ref{theo alg},
            filter $F'$ so that no two polynomials generate the same extension using method of choice.
        \item $F\leftarrow F\cup F'$.
    \end{itemize}
    \end{enumerate}
\item Return $F$.
\end{enumerate}
}

\begin{theorem}
Let $F$ be the set of polynomials returned by Algorithm \ref{alg all RA}.
For each extension $\KL/\KK$ with ramification polygon $\rpol$ and invariant $\invA$,
the set $F$ contains exactly one generating polynomial.
\end{theorem}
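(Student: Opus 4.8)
The plan is to prove the two halves of ``exactly one'' separately: completeness (every extension $\KL/\KK$ with ramification polygon $\rpol$ and invariant $\invA$ is generated by some $\varphi\in F$), and disjointness (no two polynomials in $F$ generate isomorphic extensions). The only work beyond the single-call statement Theorem~\ref{theo alg} is to control the two outer loops of Algorithm~\ref{alg all RA}, over $S_0$ and over the blocks of the partition of $\invA$. For completeness, fix such an extension $\KL/\KK$ and pick any generating Eisenstein polynomial. Since $\RS{0}(\RK)$ is the set of $n$-th powers in $\RK$, Lemma~\ref{lem phi0}(a) lets us replace it by a polynomial defining the same field whose constant-coefficient digit is the representative $\underline\delta_0\in S_0$ of the class of that digit in $\RK^\times/(\RK^\times)^n$ selected by the algorithm. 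Its residual polynomials, together with $\rpol$, form a representative of $\invA$ lying in exactly one block $\invA^{*i}$ of the partition of $\invA$ determined by $\underline\delta_0$ via Equation~(\ref{eq invAstar}). Applying Lemma~\ref{lem A star} with $\invA^{*}=\invA^{*i}$ and the representative $A$ of $\invA^{*i}$ fixed in the inner loop, we obtain an Eisenstein polynomial $\varphi$ with $\KK[x]/(\varphi)\cong\KL$ and residual polynomials $A$; as the change of uniformizer used there scales by an $n$-th root of unity, its constant-coefficient digit is still $\underline\delta_0$. So $\KL$ is generated by a polynomial with exactly the inputs $(\rpol,A,\underline\delta_0)$ of a call to \texttt{AllExtensionsSub}, and Theorem~\ref{theo alg}(a) guarantees that call --- hence $F$ --- contains a generator of $\KL$.

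For disjointness, suppose $\varphi,\psi\in F$ generate isomorphic extensions; there are three (exhaustive) cases. If $\varphi$ and $\psi$ come from the \emph{same} call to \texttt{AllExtensionsSub}, then either the hypotheses of Theorem~\ref{theo alg}(b) or~(c) hold, so that call produces pairwise non-isomorphic extensions, or the explicit filtering step of Algorithm~\ref{alg all RA} has discarded one of them; either way $\varphi=\psi$. If they come from calls with distinct parameters $\underline\delta_0\ne\underline\delta_0'$ in $S_0$: an isomorphism $\sigma$ from $\KK[x]/(\psi)$ onto $\KK[x]/(\varphi)$ carries the image of $x$ to a uniformizer $\beta$ of $\KK[x]/(\varphi)$, and writing $\beta=u\alpha$ with $\alpha$ a root of $\varphi$ and $u\in\OL^\times$ (any two uniformizers differ by a unit) we get $\psi_0=(-1)^n\norm_{\KL/\KK}(\beta)=\norm_{\KL/\KK}(u)\,\varphi_0$. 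As $\KL/\KK$ is totally ramified, $\norm_{\KL/\KK}(u)$ is congruent mod $(\pi)$ to the $n$-th power of the residue of $u$, so the constant-coefficient digits of $\psi$ and $\varphi$ differ by an $n$-th power in $\RK^\times$; thus $\underline\delta_0$ and $\underline\delta_0'$ represent the same class of $\RK^\times/(\RK^\times)^n$, forcing $\underline\delta_0=\underline\delta_0'$, a contradiction. Finally, if they come from calls with the same $\underline\delta_0$ but distinct blocks $\invA^{*i}\ne\invA^{*j}$: by the previous case the unit $u$ above reduces to an $n$-th root of unity, so the change of uniformizer $\alpha\mapsto\beta$ is a scaling by such a unit followed by a shift $\alpha\mapsto\alpha+\gamma\alpha^{m+1}$; by Remark~\ref{rem A star} for the scaling part (and since the shift does not move the residual polynomials out of that block) the residual polynomials of $\psi$ lie in the same block as those of $\varphi$, contradicting that they are the chosen representatives of the distinct blocks $\invA^{*i}$ and $\invA^{*j}$ (Theorem~\ref{theo alg}(a)). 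Hence only the first case occurs and $\varphi=\psi$.

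The main obstacle is the last case: one must verify that every change of uniformizer relating two generating polynomials of the same field decomposes as a unit scaling composed with shifts $\alpha\mapsto\alpha+\gamma\alpha^{m+1}$, and that such shifts leave the residual polynomials of the ramification polygon inside their block of the partition, so that only the scaling part --- governed by Lemma~\ref{lem res pol zeros} and Remark~\ref{rem A star} --- can move them between blocks. Equivalently, one must show that, once the constant-coefficient digit $\underline\varphi_{0,1}$ is fixed, the block $\invA^{*i}$ realized by a generating polynomial is itself an invariant of $\KL/\KK$; this requires a careful analysis, using Corollary~\ref{cor Sm} and Proposition~\ref{prop other uni}, of which $\pi$-adic coefficients a shift can alter relative to the positions that carry the residual-polynomial data. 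Establishing this --- and thereby confirming that the only possible collisions are those already removed by the per-call filtering --- is the crux; completeness and the same-call case follow essentially immediately from Theorem~\ref{theo alg} together with Lemmas~\ref{lem phi0} and~\ref{lem A star}.
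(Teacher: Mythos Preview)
Your overall plan---completeness via Lemma~\ref{lem A star} and Theorem~\ref{theo alg}(a), then disjointness by case analysis on the two outer loops---is exactly the paper's argument. The completeness half and the same-call and different-$\underline\delta_0$ cases match (the paper simply cites Lemma~\ref{lem phi0} rather than reproving it via the norm, but your norm computation is correct and amounts to the same thing).

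The ``main obstacle'' you flag in the last case is not a real obstacle, and the detour through a decomposition into ``unit scaling followed by shifts $\alpha\mapsto\alpha+\gamma\alpha^{m+1}$'' is unnecessary. Any change of uniformizer between two generators of the same totally ramified extension is already of the form $\beta=\delta\alpha$ with $\delta\in\OL^\times$, and Lemma~\ref{lem res pol zeros} (hence Theorem~\ref{theo A}) applies to this general $\delta$: the residual polynomials of $\psi$ are obtained from those of $\varphi$ by the transformation in Equation~(\ref{eq A}), which depends only on $\underline\delta\in\RL=\RK$. You have already computed in the different-$\underline\delta_0$ case that $\underline\psi_{0,1}=\underline\delta^{\,n}\,\underline\varphi_{0,1}$; when $\underline\varphi_{0,1}=\underline\psi_{0,1}$ this forces $\underline\delta^{\,n}=1$, and then by the very definition of the blocks in Equation~(\ref{eq invAstar}) the residual polynomials of $\psi$ lie in the same $\invA^{*}$ as those of $\varphi$. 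This is precisely the content of Remark~\ref{rem A star}, which the paper invokes directly. No analysis via Corollary~\ref{cor Sm} or Proposition~\ref{prop other uni} is needed here: those results govern higher-order $\pi$-adic coefficients, whereas the residual polynomials of the segments are determined at the level of $\underline\delta$. (In particular, a ``shift'' $\alpha\mapsto(1+\gamma\alpha^m)\alpha$ has $\underline\delta=1$ and leaves the residual polynomials literally unchanged.)
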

\begin{proof}
Let $\KL/\KK$ be a totally ramified extension with ramification polygon $\rpol$ and invariant $\invA$.
Let $\psi\in\OK[x]$ be an Eisenstein polynomial generating $\KL$ with $\psi_{0,1}\in S_0$.
Let $A^{(\psi)}$ be the residual polynomials of segments of $\rpol$ given $\psi$.
As $\psi$ generates $\KL$ with invariant $\invA$,
$A^{(\psi)}$ belongs to some $\invA^*$ in our partition of $\invA$.
If $A$ is our choice of representative of $\invA^*$, then by Lemma \ref{lem A star}, there is a $\varphi\in\OK[x]$ with residual polynomials $A$ such that $\KK[x]/(\psi)\cong\KK[x]/(\varphi)$.
Thus, $\KL/\KK$ can be generated by an Eisenstein polynomial $\varphi$ with residual polynomials $A$, and $\varphi_{0,1}=\psi_{0,1}$, and by Theorem \ref{theo alg}, there is at least one $\varphi\in F'$ with $F'$ returned by
\texttt{AllExtensionsSub}($\KK,\rpol_\psi,A,\psi_{0,1}$) generating $\KL/\KK$.
The output $F$ contains one generator for every extension that can be generated by any
polynomial in any $F'$ produced, and so there is a polynomial in $F$ generating $\KL/\KK$.

To show that no two polynomials in $F$ generate the same extension, it suffices to show that
no polynomials produced by different calls to Algorithm \ref{alg all sub} generate the same extension.
Let $\varphi$ and $\psi$ be in two such polynomials.
By Lemma \ref{lem phi0}, if $\varphi_{0,1}\neq\psi_{0,1}$, then as
$\varphi_{0,1},\psi_{0,1}\in\RK^\times/(\RK^\times)^n$, $\KK[x]/(\psi)\ncong\KK[x]/(\varphi)$.
Now suppose $\varphi_{0,1} = \psi_{0,1}$.
By Remark \ref{rem A star}, if the residual polynomials of $\varphi$ and $\psi$ are not in
the same $\invA^*$ then $\KK[x]/(\psi)\ncong\KK[x]/(\varphi)$.
Thus, if two polynomials are generated by Algorithm \ref{alg all sub} with different inputs of
$\delta$ or residual polynomials returned by Algorithm \ref{alg all RA},
they cannot generate the same extension.
\end{proof}


\section{Examples}\label{sec ex}

In Figure \ref{fig compare} we compare the implementation of
the algorithm from \cite{pauli-roblot} in Magma \cite{magma} (\texttt{AllExtensions}) and Pari \cite{pari} (\texttt{padicfields})
with our implementation of Algorithm {\ref{alg all RA}}  in Magma using root finding to filter the set of polynomials to obtain a minimal set.
In the implementation of the method from \cite{pauli-roblot}  Magma we 
replaced the deterministic enumeration of polynomials by random choices, 
which yields a considerable performance improvement.
In our implementation of Algorithm {\ref{alg all RA}} the filtering out of redundant polynomials 
can accelerated by using reduction \cite{monge} instead of root finding.

\begin{figure}
\begin{center}
\renewcommand\arraystretch{1.1}
\renewcommand\tabcolsep{2pt}
\begin{tabular}{l|c|c|c||c|c||c}
$\KK$ & $n$ & $v(\disc)$ & $\#F$ & Magma \cite{pauli-roblot} & Pari \cite{pauli-roblot} & Magma (Alg. \ref{alg all RA})\\ \hline
$\Q_3$& 9  & 9   &  2 & 10 ms     & 37 ms     & 10 ms\\
$\Q_3$& 9  & 22  & 96 & 67 s & 11 s & 30 ms + 5.77 s$\dagger$\\
$\Q_3$& 9  & 26  & 81 & 16.61 s   & 3.64 s    & 0.05 s\\
$\Q_3$& 27 & 27  &  2 & 30 ms     & 56 h      & 10 ms\\
$\Q_3$& 27 & 107 & 1,594,323 & $>5$ days      & --- & 17 min \\
\end{tabular}
\end{center}
\caption{Time needed to compute a minimal set $F$ of generating polynomials of all extensions of $\KK$ of 
degree $n$ with discriminant exponent $v(\disc)$.
All timings were obtained on a computer with a Intel Core 2 Quad CPU at 2.83GHz and 8Gb RAM running Ubuntu Linux 14.04 LTS.
($\dagger$ time required to filter output of Alg. \ref{alg all sub})}
\label{fig compare}
\end{figure}

We now present generating polynomials for 
totally ramified extensions of degree 15 over $\Q_5$ (Example \ref{ex q5 15}),
totally ramified extensions of degree 8 over an unramified extension of degree 2 over $\Q_2$ (Example \ref{ex q2 unram2 8}),
totally ramified extensions of degree 9 over a ramified extension of $\Q_3$ of degree 3 (Example \ref{ex q3 ram3 9}),
and an example over $\Q_3$ that shows that in general not all extensions with the same ramification polygon and invariant $\invA$ have the same mass
(Example \ref{ex q3 9 mass}).

\begin{example}\label{ex q5 15}
We find generating polynomials for all totally ramified 
extensions $\KL$ of $\Q_5$ of degree 15 with $v_5(\disc(\KL))=29$,
the highest possible valuation by Proposition \ref{prop.ore}.
There is only one possible ramification polygon 
$\rpol = \{(1,15),(5,0),(10,0),(15,0)\}$ and only one possible set
of residual polynomials $\invA = \{ (3z + 2, z^{10} + 3z^5 + 3) \}$
for such extensions.
Denote by $\varphi(x)=\sum_{i=0}^{15}\varphi_i x^i$ an Eisenstein polynomial generating such a field $\KL$.

By Lemma \ref{lem phi0} all extensions of $\Q_5$ with ramification polygon $\rpol$ can
be generated by polynomials $\varphi\in\Z_5[x]$ with $\varphi_0\equiv 5\bmod 25$.
As $b_t = 0$ for all points $(p^{s_t},a_tn+b_t)\in\rpol$, Proposition \ref{prop R iff}
only gives us restrictions on $\varphi$ based on $L_\rpol$ and no coefficients
are set by Lemma \ref{lem A fix}.
This provides the following template for $\varphi$:

\begin{center}
\footnotesize
\renewcommand\arraystretch{1.1}
\renewcommand\tabcolsep{2pt}
\begin{tabular}{l|ccccccccccccccccc}
&$x^{15}$ &$x^{14}$ &$x^{13}$ &$x^{12}$ &$x^{11}$ &$x^{10}$ &$x^{9}$ &$x^{8}$ &$x^{7}$ &$x^{6}$ &$x^{5}$ &$x^{4}$ &$x^{3}$ &$x^{2}$ &$x^{1}$ &$x^{0}$ \\\hline
$5^2$&$\{0\}$ &$\reps{\F_5}$ &$\reps{\F_5}$ &$\reps{\F_5}$ &$\reps{\F_5}$ &$\reps{\F_5}$ &$\reps{\F_5}$ &$\reps{\F_5}$ &$\reps{\F_5}$ &$\reps{\F_5}$ &$\reps{\F_5}$ &$\reps{\F_5}$ &$\reps{\F_5}$ &$\reps{\F_5}$ &$\reps{\F_5}$ &$\reps{\F_5}$ \\
$5^1$&$\{0\}$ &$\{0\}$ &$\{0\}$ &$\{0\}$ &$\{0\}$ &$\reps{\F_5}$ &$\{0\}$ &$\{0\}$ &$\{0\}$ &$\{0\}$ &$\reps{\F_5}$ &$\{0\}$ &$\{0\}$ &$\{0\}$ &$\{0\}$ &$\{1\}$ \\
$5^0$&$\{1\}$ &$\{0\}$ &$\{0\}$ &$\{0\}$ &$\{0\}$ &$\{0\}$ &$\{0\}$ &$\{0\}$ &$\{0\}$ &$\{0\}$ &$\{0\}$ &$\{0\}$ &$\{0\}$ &$\{0\}$ &$\{0\}$ &$\{0\}$
\end{tabular}
\end{center}

The ramification polygon $\rpol_2$ has no segments with non-zero integral slope.
We get $\RS{1}=x^{15}$, $\RS{2}=x^{15}$, and $\RS{3}=x^{15}$, 
with $15\hhf(1)=5$, $15\hhf(2)=10$, and $15\hhf(3)=15$.
Thus $\varphi_{5,1}=0$, {$\varphi_{10,1}=0$}, and {$\varphi_{0,2}=0$}.
Further, for $m\ge 4$, $\RS{m}=x$. As $15\hhf(m)=15+m$ for $m\ge 4$,
by Lemma \ref{lem Sm choice}, we can set {$\varphi_{k,j}=0$} 
for $k+9(j-1)\ge 19$.
Therefore, the generating polynomials $\varphi$ of the fields over $\Q_5$
with invariants $\rpol$ and $\invA$ follow this template:

\begin{center}
\footnotesize
\renewcommand\arraystretch{1.1}
\renewcommand\tabcolsep{2pt}
\begin{tabular}{l|ccccccccccccccccc}
&$x^{15}$ &$x^{14}$ &$x^{13}$ &$x^{12}$ &$x^{11}$ &$x^{10}$ &$x^{9}$ &$x^{8}$ &$x^{7}$ &$x^{6}$ &$x^{5}$ &$x^{4}$ &$x^{3}$ &$x^{2}$ &$x^{1}$ &$x^{0}$ \\\hline
$5^2$&$\{0\}$ &$\{0\}$ &$\{0\}$ &$\{0\}$ &$\{0\}$ &$\{0\}$ &$\{0\}$ &$\{0\}$ &$\{0\}$ &$\{0\}$ &$\{0\}$ &$\{0\}$ &$\reps{\F_5}$ &$\reps{\F_5}$ &$\reps{\F_5}$ &$\{0\}$ \\
$5^1$&$\{0\}$ &$\{0\}$ &$\{0\}$ &$\{0\}$ &$\{0\}$ &$\{0\}$ &$\{0\}$ &$\{0\}$ &$\{0\}$ &$\{0\}$ &$\{0\}$ &$\{0\}$ &$\{0\}$ &$\{0\}$ &$\{0\}$ &$\{1\}$ \\
$5^0$&$\{1\}$ &$\{0\}$ &$\{0\}$ &$\{0\}$ &$\{0\}$ &$\{0\}$ &$\{0\}$ &$\{0\}$ &$\{0\}$ &$\{0\}$ &$\{0\}$ &$\{0\}$ &$\{0\}$ &$\{0\}$ &$\{0\}$ &$\{0\}$
\end{tabular}
\end{center}
As all of the $\RS{m}$ are surjective, by Theorem \ref{theo alg} (b), no two of these 125 polynomials generate isomorphic extensions of $\Q_5$.
\end{example}

\begin{example}\label{ex q2 unram2 8}
Let $\KK$ be the unramified extension of $\Q_2$ generated by $y^2+y+1\in\Q_2[y]$.
Let $\gamma$ be a root of $y^2+y+1$, so $\RK = \F_2(\gamma)$.
We want to find generating polynomials for all totally ramified 
extensions $\KL$ of $\KK$ of degree 8 with $v_2(\disc(\KL))=16$,
ramification polygon with points $\rpol = \{(1,9),(2,6),(8,0)\}$,
and $\invA$ containing $(\gamma z + \gamma, z^6 + \gamma)$.
Denote by $\varphi=\sum_{i=0}^8\varphi_i x^i$ an Eisenstein polynomial generating such a field $\KL$.

By Proposition \ref{prop R iff}, we have $v(\varphi_1)=2$ and $v(\varphi_6)=1$,
and that $v(\varphi_i)\geq 2$ for $i\in\{2,3,4,5,7\}$.
By Lemma \ref{lem A fix},
the point $(1,9)=(2^0,1\cdot 8+1)$ on $\rpol$ gives us that $\varphi_{1,2}=\gamma$ and
the point $(2,6)=(2^1,0\cdot 8+6)$ on $\rpol$ gives us that $\varphi_{6,1}=\gamma$.
We set $\varphi_{0,1}=1$ by Lemma \ref{lem phi0}
and the template for the polynomials $\varphi$ is:

\begin{center}
\small
\renewcommand\arraystretch{1.1}
\renewcommand\tabcolsep{2pt}
\begin{tabular}{l|cccccccccc}
     &$x^{8}$ &$x^{7}$ &$x^{6}$   &$x^{5}$ &$x^{4}$   &$x^{3}$ &$x^{2}$   &$x^{1}$   &$x^{0}$   \\\hline
$2^3$&$\{0\}$ &$\reps{\RK}$   &$\reps{\RK}$     &$\reps{\RK}$   &$\reps{\RK}$     &$\reps{\RK}$   &$\reps{\RK}$     &$\reps{\RK}$     &$\reps{\RK}$     \\
$2^2$&$\{0\}$ &$\reps{\RK}$   &$\reps{\RK}$     &$\reps{\RK}$   &$\reps{\RK}$     &$\reps{\RK}$   &$\reps{\RK}$     &$\{\gamma\}$ &$\reps{\RK}$ \\
$2^1$&$\{0\}$ &$\{0\}$ &$\{\gamma\}$ &$\{0\}$ &$\{0\}$   &$\{0\}$ &$\{0\}$   &$\{ 0 \}$ &$\{1\}$ \\
$2^0$&$\{1\}$ &$\{0\}$ &$\{0\}$ &$\{0\}$ &$\{0\}$ &$\{0\}$ &$\{0\}$ &$\{0\}$ &$\{0\}$ 
\end{tabular}
\end{center}

It remains to consider the $\RS{m}$.
Our ramification polygon $\rpol$ has two segments of integral slope, $-3$ and $-1$, respectively.
So by Lemma \ref{lem comp res poly}, $\RS{1}(z)=z^2\RA_2 = z^2(z^6 + \gamma)$ and $\RS{3}(z)=z\RA_1 = z(\gamma z + \gamma)$.
As $\RS{1}$ is surjective and $n\hhf(1)=8$, we may set $\varphi_{0,2} = 0$.
As $\rpol$ has no segment of slope $-2$, $\RS{2}$ is surjective, so with $n\hhf(2)=10$, we may set $\varphi_{2,2} = 0$.
On the other hand, $\RS{3}$ is not surjective and has image $\{0,\gamma\}$.
By Lemma \ref{lem Sm choice} and as $n\hhf(3)=12$, $\varphi_{4,2}\in\reps{\RK}/\{0,\gamma\}=\{0,1\}$.
For $m\geq 4$, $n\hhf(m)=9+m$, and so we can set {$\varphi_{k,j}=0$} for $k+8(j-1)\ge 13$.
This gives us the following template for polynomials $\varphi$:

\begin{center}
\small
\renewcommand\arraystretch{1.1}
\renewcommand\tabcolsep{2pt}
\begin{tabular}{l|cccccccccc}
             &$x^8$    &$x^7$      &$x^6$        &$x^5$   &$x^4$      &$x^3$      &$x^2$      &$x^1$        &$x^0$ \\\hline
      $2^3$&$\{0\}$    &$\{0\}$    &$\{0\}$      &$\{0\}$ &$\{0\}$    &$\{0\}$    &$\{0\}$    &$\{0\}$      &$\{0\}$  \\
      $2^2$&$\{0\}$    &$\{0\}$    &$\{0\}$      &$\{0\}$ &$\{0,1\}$  &$\repsK$   &$\{0\}$    &$\{\gamma\}$ &$\{0\}$  \\
      $2^1$&$\{0\}$    &$\{0\}$    &$\{\gamma\}$ &$\{0\}$ &$\{0\}$    &$\{0\}$    &$\{0\}$    &$\{0\}$      &$\{1\}$  \\
      $2^0$&$\{1\}$    &$\{0\}$    &$\{0\}$      &$\{0\}$ &$\{0\}$    &$\{0\}$    &$\{0\}$    &$\{0\}$      &$\{0\}$
\end{tabular}
\end{center}

As $\RS{3}$ is the only non-surjective $\RS{m}$, and for all integers $k$ greater than $n\hhf(3)=12$, $n\hhf(k-9)=k$,
we have by Theorem \ref{theo alg} (c) that no two of these 8 polynomials generate the same extension.
\end{example}

\begin{example}\label{ex q3 ram3 9}
Let $\KK = \Q_3[x]/(x^2-3)$ and let $\pi$ be a uniformizer of the valuation ring of $\KK$.
As in Example \ref{ex R2}, there are three possible ramification polygons for extensions $\KL$ of 
$\KK$ of degree $9$ with $v_3(\disc(\KL))=18$, namely
$\rpol_1=\{(1,10),(9,0)\}$,
$\rpol_2=\{(1,10),(3,3),(9,0)\}$, and
$\rpol_3=\{(1,10),(3,6),(9,0)\}$ (compare Figure \ref{fig ram pol ex}).

Let us again choose to investigate $\rpol_2$.
By Lemma \ref{lem vphi} we have {$v_\pi(\varphi_3)=1$}
and by Lemma \ref{lem phi0} we can set $\varphi_{0,1}=1$.
As $\RK=\underline{\smash{\Q_3}}$, we have the same four choices 
for the invariant $\invA$:
$\invA_{2,1}=\{(1+2x, 2+x^3)\}$,
$\invA_{2,2}=\{(2+x, 1+2x^3)\}$,
$\invA_{2,3}=\{(1+x, 1+x^3)\}$, and
$\invA_{2,4}=\{(2+2x, 2+x^3)\}$.

Let us choose $\invA_{2,1}$.
By Lemma \ref{lem A fix} we get
from the point $(1,10)=(3^0,1\cdot9+1)$ on $\rpol_2$ that {$\varphi_{1,2}=1$} and 
from the point $(3,3)=(3^1,0\cdot9+3)$ on $\rpol_2$ that {$\varphi_{3,1}=2$}.

The ramification polygon $\rpol_2$ has no segments with integral slope.
We get $\RS{1}=x^3$, $\RS{2}=x^3$, and $\RS{3}=x^3$, 
with $9\hhf(1)=6$, $9\hhf(2)=9$, and $9\hhf(3)=12$.
Thus {$\varphi_{6,1}=0$}, {$\varphi_{0,2}=0$}, and {$\varphi_{3,2}=0$}.
Furthermore $\RS{m}=x$ for with $9\hhf(m)=10+m$ for $m\ge 4$.
Thus by Lemma \ref{lem Sm choice} we can set {$\varphi_{k,j}=0$} 
for $k+9(j-1)\ge 14$.

Proceeding as in  Examples \ref{ex R2}, \ref{ex R2 A21}, and \ref{ex R2 A21 S} we obtain
a familiar template for the polynomials generating fields over $\KK$ with ramification polygon $\rpol_2$
and invariant $\invA_{2,1}$:

\begin{center}
\small
\renewcommand\arraystretch{1.1}
\renewcommand\tabcolsep{2pt}
\begin{tabular}{l|cccccccccc}
             &$x^9$   &$x^8$   &$x^7$   &$x^6$  &$x^5$  &$x^4$      &$x^3$   &$x^2$       &$x^1$   &$x^0$    \\\hline
      $\pi^4$&$\{0\}$ &$\{0\}$ &$\{0\}$ &$\{0\}$&$\{0\}$&$\{0\}$    &$\{0\}$ &$\{0\}$     &$\{0\}$ &$\{0\}$  \\
      $\pi^3$&$\{0\}$ &$\{0\}$ &$\{0\}$ &$\{0\}$&$\{0\}$&$\{0\}$    &$\{0\}$ &$\{0\}$     &$\{0\}$ &$\{0\}$  \\
      $\pi^2$&$\{0\}$ &$\{0\}$ &$\{0\}$ &$\{0\}$&$\{0\}$&$\{0,1,2\}$&$\{0\}$ &$\{0,1,2\}$ &$\{1\}$ &$\{0\}$  \\
      $\pi^1$&$\{0\}$ &$\{0\}$ &$\{0\}$ &$\{0\}$&$\{0\}$&$\{0\}$    &$\{2\}$ &$\{0\}$     &$\{0\}$ &$\{1\}$  \\
      $\pi^0$&$\{1\}$ &$\{0\}$ &$\{0\}$ &$\{0\}$&$\{0\}$&$\{0\}$    &$\{0\}$ &$\{0\}$     &$\{0\}$ &$\{0\}$
\end{tabular}
\end{center}

As all of the $\RS{m}$ are surjective, we obtain a unique generating polynomial
of each degree 9 extension of $\KK$ with $v_3(\disc(\KL))=18$, ramification polygon $\rpol_2$, and invariant $\invA_{2,1}$.

\end{example}

As mentioned in the previous section, our choice of residual polynomials relate to the size of the automorphism group
of the extensions generated by our polynomials.
However, the polynomials generated by Algorithm {\ref{alg all RA}} (and in general, those generating extensions of the same degree,
discriminant, ramification polygon, and $\invA$) do not generate extensions with the same automorphism group size.

\begin{example}\label{ex q3 9 mass}
Over $\Q_3[x]$, let $\varphi(x) = x^9 + 6x^6 + 18x^5 + 3$ and $\psi(x) = x^9 + 18x^8 + 9x^7 + 6x^6 + 18x^5 + 3$.
Both are Eisenstein polynomials generating degree $9$ extensions over $\Q_3$ with
ramification polygon $\rpol = \{(1,14),(3,6),(9,0)\}$ and having residual polynomials $\RA_1 = 2z^2+1$ and $\RA_2 = z^6+2$.
Using root-finding, we see that over $\Q_3[x]/(\varphi)$, $\varphi$ has 3 roots, while over $\Q_3[x]/(\psi)$, $\psi$ has 9 roots.
Thus $\psi$ generates a normal extension, while $\varphi$ generates three extensions with automorphism groups of size 3 which shows that not all extension with the same ramification polygon and residual polynomials have the same mass.
\end{example}


\section{Acknowledgments}\label{sec ack}

We thank Jonathan Milstead for his careful reading of our manuscript.

\bibliography{local}

\providecommand{\bysame}{\leavevmode\hbox to3em{\hrulefill}\thinspace}
\providecommand{\MR}{\relax\ifhmode\unskip\space\fi MR }
\providecommand{\MRhref}[2]{%
  \href{http://www.ams.org/mathscinet-getitem?mr=#1}{#2}
}
\providecommand{\href}[2]{#2}
\begin{thebibliography}{GMN13}

\bibitem[Ama71]{amano}
Shigeru Amano, \emph{Eisenstein equations of degree {$p$} in a
  {${\mathfrak{p}}$}-adic field}, J. Fac. Sci. Univ. Tokyo Sect. IA Math.
  \textbf{18} (1971), 1--21. \MR{0308086 (46 \#7201)}

\bibitem[BCP97]{magma}
Wieb Bosma, John Cannon, and Catherine Playoust, \emph{The {M}agma algebra
  system. {I}. {T}he user language}, J. Symbolic Comput. \textbf{24} (1997),
  no.~3-4, 235--265, Computational algebra and number theory (London, 1993).
  \MR{MR1484478}

\bibitem[FV02]{fv}
Ivan~B. Fesenko and Sergey~V. Vostokov, \emph{Local fields and their
  extensions}, 2nd ed., Translations of Mathematical Monographs, vol. 121,
  American Mathematical Society, 2002.

\bibitem[GMN13]{guardia-montes-nart}
Jordi Gu{\`a}rdia, Jes{\'u}s Montes, and Enric Nart, \emph{A new computational
  approach to ideal theory in number fields}, Found. Comput. Math. \textbf{13}
  (2013), no.~5, 729--762. \MR{3105943}

\bibitem[GNP12]{guardia-nart-pauli}
Jordi Gu{\`a}rdia, Enric Nart, and Sebastian Pauli, \emph{Single-factor lifting
  and factorization of polynomials over local fields}, J. Symbolic Comput.
  \textbf{47} (2012), no.~11, 1318--1346. \MR{2927133}

\bibitem[GP12]{greve-pauli}
Christian Greve and Sebastian Pauli, \emph{Ramification polygons, splitting
  fields, and {G}alois groups of {E}isenstein polynomials}, International
  Journal of Number Theory \textbf{8} (2012), no.~6, 1401--1424. \MR{2965757}

\bibitem[Hel90]{helou}
Charles Helou, \emph{Non-{G}alois ramification theory of local fields}, Algebra
  Berichte [Algebra Reports], vol.~64, Verlag Reinhard Fischer, Munich, 1990.
  \MR{1076620 (91j:11103)}

\bibitem[Kra66]{krasner}
Marc Krasner, \emph{Nombre des extensions d'un degr\'e donn\'e d'un corps
  {${\mathfrak{p}}$}-adique}, Les {T}endances {G}\'eom. en {A}lg\`ebre et
  {T}h\'eorie des {N}ombres, Editions du Centre National de la Recherche
  Scientifique, Paris, 1966, pp.~143--169. \MR{0225756 (37 \#1349)}

\bibitem[Li97]{li}
Hua-Chieh Li, \emph{p-adic power series which commute under composition},
  Transactions of the American Mathematical Society \textbf{349} (1997), no.~4,
  1437--1446.

\bibitem[Lub81]{lubin}
Jonathan~D. Lubin, \emph{The local {K}ronecker-{W}eber theorem}, Transactions
  of the American Mathematical Society \textbf{267} (1981), no.~1, 133--138.

\bibitem[MN92]{montes-nart}
Jes{\'u}s Montes and Enric Nart, \emph{On a theorem of {O}re}, J. Algebra
  \textbf{146} (1992), no.~2, 318--334. \MR{1152908 (93f:11077)}

\bibitem[Mon99]{montes}
Jes{\'u}s Montes, \emph{Poligonos de {N}ewton de orden superior y aplicaciones
  aritmeticas}, 1999, Thesis (Ph.D.)--Universitat de Barcelona.

\bibitem[Mon14]{monge}
Maurizio Monge, \emph{A family of {E}isenstein polynomials generating totally
  ramified extensions, identification of extensions and construction of class
  fields}, Int. J. Number Theory \textbf{10} (2014), no.~7, 1699--1727.
  \MR{3256847}

\bibitem[Ore26]{ore-bemerkungen}
{\"O}ystein Ore, \emph{Bemerkungen zur {T}heorie der {D}ifferente}, Math. Z.
  \textbf{25} (1926), no.~1, 1--8. \MR{1544795}

\bibitem[Ore28]{ore-newton}
\bysame, \emph{Newtonsche {P}olygone in der {T}heorie der algebraischen
  {K}\"orper}, Math. Ann. \textbf{99} (1928), no.~1, 84--117 (German).
  \MR{1512440}

\bibitem[Pan95]{panayi}
Peter Panayi, \emph{Computation of {L}eopoldt's $p$-adic regulator}, 1995,
  Thesis (Ph.D.)--University of East Anglia.

\bibitem[Pau06]{pauli-lcf}
Sebastian Pauli, \emph{Constructing class fields over local fields}, J.
  Th\'eor. Nombres Bordeaux \textbf{18} (2006), no.~3, 627--652. \MR{2330432
  (2008f:11135)}

\bibitem[Pau10]{pauli-pf2}
\bysame, \emph{Factoring polynomials over local fields {II}}, Algorithmic
  number theory, Lecture Notes in Comput. Sci., vol. 6197, Springer, Berlin,
  2010, pp.~301--315. \MR{2721428 (2012c:12002)}

\bibitem[PG14]{pari}
The Pari~Group, \emph{{Pari/GP version {\tt 2.7.0}}}, Bordeaux, 2014, available
  from \texttt{http://pari.math.u-bordeaux.fr/}.

\bibitem[PR01]{pauli-roblot}
Sebastian Pauli and Xavier-Fran{\c{c}}ois Roblot, \emph{On the computation of
  all extensions of a {$p$}-adic field of a given degree}, Math. Comp.
  \textbf{70} (2001), no.~236, 1641--1659 (electronic). \MR{1836924
  (2002e:11166)}

\bibitem[Sch03]{scherk}
John Scherk, \emph{The ramification polygon for curves over a finite field},
  Canad. Math. Bull. \textbf{46} (2003), no.~1, 149--156. \MR{1955622
  (2004c:11220)}

\bibitem[Ser79]{serre}
Jean-Pierre Serre, \emph{Local fields}, Graduate Texts in Mathematics, vol.~67,
  Springer-Verlag, New York-Berlin, 1979, Translated from the French by Marvin
  Jay Greenberg. \MR{554237 (82e:12016)}

\end{thebibliography}
\bibliographystyle{amsalpha}

\end{document}